\newtheorem{thm}{Theorem}[section]
\newtheorem{lem}[thm]{Lemma}
\newtheorem{prop}[thm]{Proposition}
\newtheorem{defn}[thm]{Definition}
\newtheorem{assumpt}{Assumption}[section]
\newtheorem{rmk}{Remark}[section]
\newcommand{\eps}{\varepsilon}
\newcommand{\Ord}{\mathcal{O}}
\newcommand{\om}{\omega}
\newcommand{\Om}{\Omega}
\newcommand{\expt}{\mathbb{E}}
\newcommand{\mbbP}{\mathbb{P}}
\newcommand{\R}{\mathbb{R}}
\newcommand{\C}{\mathcal{C}}
\newcommand{\Ch}{\hat{\C}}
\newcommand{\Th}{\hat{T}}
\newcommand{\Ind}{\mathbf{1}_{\{0\}}}
\newcommand{\la}{\langle}
\newcommand{\ra}{\rangle}
\newcommand{\pih}{\hat{\pi}}
\newcommand{\Psiz}{\tilde{\Psi}}
\newcommand{\pj}{\Pi}
\newcommand{\Xeps}{X^\eps}
\newcommand{\stopt}{\mathfrak{e}}
\newcommand{\stopteps}{\mathfrak{e}^\eps}
\newcommand{\stabsol}{\gamma}
\newcommand{\modcont}{\mathfrak{w}}
\newcommand{\pjeps}{{\pj}^{\eps}}
\newcommand{\hpex}[1]{{\pj}^{\eps}_{#1}\Xeps}
\newcommand{\zed}{\mathfrak{z}^\eps}
\newcommand{\zedh}{\widehat{\mathfrak{z}}^\eps}
\newcommand{\bfe}{\mathbf{e}}
\newcommand{\critsol}{\chi}
\newcommand{\mfXeps}{\mathfrak{X}^\eps}
\newcommand{\zpw}{\mathfrak{y}}
\newcommand{\mfw}{\mathfrak{w}}
\newcommand{\Tavg}{\mathbb{T}}
\newcommand{\stbp}{\mathscr{Y}}
\newcommand{\ydec}{\mathfrak{Y}}
\newcommand{\remG}{\mathfrak{g}}
\newcommand{\mfv}{\mathfrak{v}}
\newcommand{\zedt}{\pmb{\widetilde{\mathfrak{z}}}^\eps}\newcommand{\zedtz}{\pmb{\widetilde{\mathfrak{z}}}^0}
\newcommand{\B}{\mathcal{B}}
\newcommand{\hprc}{\mathcal{H}}
\title[Perturbations of critical DDE]{Approximation of delay differential equations at the verge of instability by equations without delay}
\author{Nishanth Lingala}
\address{University of Illinois\\
Urbana, IL, USA.}
\email{nlingala1@gmail.com}
\keywords{Stochastic Delay differential equation; Hopf Bifurcation; Dimensional Reduction; Multiscale Analysis.}
\subjclass[2010]{34K06, 34K27, 34K33, 34K50}
\begin{document}

\begin{abstract}
We consider linear delay differential equations at the verge of Hopf instability, i.e. a pair of roots of the characteristic equation are on the imaginary axis of the complex plane and all other roots have negative real parts. When nonlinear and noise perturbations are present, we show that the error in approximating the dynamics of the delay system by certain two dimensional stochastic differential equation \emph{without delay} is small (in an appropriately defined sense). Two cases are considered: (i) linear perturbations and multiplicative noise (ii) cubic perturbations and additive noise. The two-dimensional system without-delay is related to the projection of the delay equation onto the space spanned by the eigenfunctions corresponding to the imaginary roots of the characteristic equation.

A part of this article is an attempt to relax the Lipschitz restriction imposed on the coefficients in \cite{LNNSN_SDDEavg} for additive noise case. Also, the multiplicative noise case is not considered in \cite{LNNSN_SDDEavg}. Examples without rigorous proofs are worked in \cite{LNPRLE}.
\end{abstract}

\maketitle

\section{Introduction}\label{sec:intro}

Consider the stochastic delay differential equation (SDDE)
\begin{align}\label{eq:motivsys_SDDE}
dx(t)=\left(\mu x(t-1)+x^3(t)\right)dt + \eps c_1x(t-1)dV_1(t) + \eps^2 c_2dV_2(t),
\end{align}
where $0<\eps\ll 1$ and $V_1,V_2$ are Wiener processes. The above equation represents a noisy perturbation of the following deterministic system:
\begin{align}\label{eq:motivsys}
\dot{x}(t)=\mu x(t-1)+x^3(t).
\end{align}
The linear system corresponding to \eqref{eq:motivsys} is \begin{align}\label{eq:motivsys_linear}
\dot{x}(t)=\mu x(t-1).
\end{align}
Seeking a solution of the form $x(t)=e^{t\lambda}$ to the linear system, we find that $\lambda$ must satisfy the characteristic equation $\lambda - \mu e^{-\lambda}=0$. When $\mu \in (-\frac{\pi}{2},0)$, all roots of the characteristic equation have negative real parts (see corollary 3.3 on page 53 of \cite{Stepanbook}). When $\mu=-\frac{\pi}{2}$ a pair of roots $\pm i\frac{\pi}{2}$ are on the imaginary axis and all other roots have negative real parts. When $\mu<-\frac{\pi}{2}$ some of the roots have positive real part.
Hence, the system \eqref{eq:motivsys_linear} is on the verge of instability at $\mu=-\frac{\pi}{2}$. Close to the verge of instability, the behaviour of the solution is oscillatory with amplitude increasing or decreasing depending on whether the root with the largest real part has positive real part or negative.

To study \eqref{eq:motivsys_SDDE} close to the verge of instability, set $\mu=-\frac{\pi}{2}+\eps^2\tilde{\mu}$ and zoom-in near $x=0$, i.e. write $y(t)=\eps^{-1}x(t)$ for $x$ governed by \eqref{eq:motivsys_SDDE}. We get 
\begin{align}\label{eq:exmpHpfyprc}
dy(t)=-\frac{\pi}{2}y(t-1)dt+\eps^2(y^3(t)+\tilde{\mu}y(t-1))dt+\eps c_1y(t-1)dV_1(t) + \eps c_2dV_2(t).
\end{align}
The equations studied in this paper are of the above kind. Before stating the equations in more precise terms below, we describe briefly the motivation for studying such equations.

Delay equations at the verge of instability arise, for example, in machining processes \cite{Gabor1}, in the response of eye-pupil to incident light \cite{Long90}, in human balancing \cite{Yao_delay_SDDE_stand}. In machining processes, the motion of the cutting tool can be described by a DDE---the tool cuts a work-piece placed on a rotating shaft and the delay is the time-period of the rotating shaft. For each rotation period there is certain rate of cutting below which the tool is stable and above which the tool breaks.  The inhomogenities in the properties of the material being cut can be modeled by noise (see \cite{buck_kusk_noise_machine_tool}). The eye-pupil exhibits oscillations in response to incident light---however there is some delay in the response because neurons have finite processing speed. This phenomenon can be modeled using a DDE at the verge of instability \cite{Long90}. So, a study of the effect of noise perturbations on `DDE at the verge of instability' is indeed useful.

Now we describe the equations studied in this article in more precise terms.

Let $\{y_t : t \geq 0\}$ be an $\R$-valued process governed by an SDDE. Let $r>0$ be the maximum of the delays involved in the drift and diffusion coefficients of the SDDE. To find the evolution at time $t$ of the process, we need to keep track of $y_s$  for $t-r\leq s \leq t$. For this purpose, let $\C:=C([-r,0];\R)$ be the space of $\R$-valued continuous functions on $[-r,0]$, and equip $\C$ with sup norm:
$$||\eta||:=\sup_{\theta \in [-r,0]}|\eta(\theta)|, \qquad \text{for } \eta\in \C.$$
Define the \emph{segment extractor} $\pj_t$ as follows: for $f\in C([-r,\infty);\R)$, $$(\pj_tf)(\theta)=f(t+\theta), \qquad \theta \in [-r,0], \quad t\in[0,\infty).$$
Then, consider equation of the form:
\begin{align}\label{eq:exmpHpfyprc_C1}
dy(t)=L_0(\pj_t y)dt+\eps^2G(\pj_ty)dt+\eps L_1(\pj_ty)dV_1(t) + \eps c_2dV_2(t), \qquad t\geq 0,
\end{align}
where $L_0,L_1:\C\to \R$ are bounded linear operators, $G:\C \to \R$, and $V_i$ are Wiener processes. Of course, as an initial condition we specify $\pj_0y=\xi \in \C$.

If we choose the maximum delay $r=1$ and $L_0(\eta)=-\frac{\pi}{2}\eta(-1)$, $L_1(\eta)=c_1\eta(-1)$, $G(\eta)=\eta^3(0)+\tilde{\mu}\eta(-1)$, we see that \eqref{eq:exmpHpfyprc_C1} represents \eqref{eq:exmpHpfyprc}.

We make the following assumption on $L_0$ to reflect the Hopf-bifurcation scenario:
\begin{assumpt}\label{ass:assumptondetsys}
We assume that the corresponding unperturbed DDE 
\begin{align}\label{eq:detDDE}
\dot{x}(t)=L_0(\pj_t x)
\end{align}
is critical, i.e. a pair of roots ($\pm i\om_c$) of the characteristic equation $\lambda-L_0(e^{\lambda \,\cdot})=0$ are on the imaginary axis (critical eigenvalues) and all other roots have negative real parts (stable eigenvalues). 
\end{assumpt}

Roughly speaking, under the above assumption, the solution of the unperturbed system \eqref{eq:detDDE} is oscillatory with constant amplitude. However for the perturbed system \eqref{eq:exmpHpfyprc_C1}, it can be shown that for the amplitude of oscillation of $y$ to change considerably, we need to wait for a time of order $\eps^{-2}$. Hence we change the time scale, i.e. define $Y^\eps(t)=y(t/\eps^2)$.

To be able to put the rescaled process $Y^\eps$ in a form akin to \eqref{eq:exmpHpfyprc_C1} we need to define the \emph{rescaled segment extractor} $\pjeps_t$ as follows:
for $f\in C([-\eps^2r,\infty);\R)$,  $$(\pjeps_tf)(\theta)=f(t+\eps^2\theta), \qquad \theta \in [-r,0], \quad t\in[0,\infty).$$
Then, \eqref{eq:exmpHpfyprc_C1}, with $Y^\eps(t)=y(t/\eps^2)$, can be written as
\begin{align}\label{eq:exmpHpfyprc_C}
dY^\eps(t)=\eps^{-2}L_0(\pjeps_t Y^\eps)dt+G(\pjeps_t Y^\eps)dt+ L_1(\pjeps_t Y^\eps)dW_1(t) + c_2dW_2(t),
\end{align}
where $W_i(t)=\eps V_i(t/\eps^2)$ are again Wiener processes.

In this paper, we restrict to equations of the form:
\begin{align}\label{eq:considerMAINadd}
\begin{cases}d\Xeps(t)&=\eps^{-2}L_0(\pjeps_t \Xeps)dt+G(\pjeps_t \Xeps)dt+ \sigma dW(t), \qquad t\in [0,T],\\
\Xeps(t)&=\xi(\eps^{-2}t), \qquad t\in[-\eps^2r,0], \quad \xi\in \C,\\
G(\eta)&=\int_{-r}^0\eta(\theta)d\nu_1(\theta)+\int_{-r}^0\eta^3(\theta)d\nu_3(\theta),
\end{cases}
\end{align}
where for $i=1,3$, $\nu_i:[-r,0]\to \R$, are bounded functions continuous from the left on $(-r,0)$ and normalized with $\nu_i(0)=0$; and also equations of the form:
\begin{align}\label{eq:considerMAINmul}
\begin{cases}d\Xeps(t)&=\eps^{-2}L_0(\pjeps_t \Xeps)dt+G(\pjeps_t \Xeps)dt+ L_1(\pjeps_t \Xeps) dW(t), \qquad t\in [0,T],\\
\Xeps(t)&=\xi(\eps^{-2}t), \qquad t\in[-\eps^2r,0], \quad \xi\in \C,\\
&|G(\eta_1)-G(\eta_2)|\,\leq\,K_G||\eta_1-\eta_2||, \qquad G(0)=0.
\end{cases}
\end{align}

We refer to \eqref{eq:considerMAINadd} as the \emph{additive noise case} and \eqref{eq:considerMAINmul} as the \emph{mulitplicative noise case}. In both cases we assume that the initial condition $\xi$ is deterministic (not a random variable).

Equations of the form $d\Xeps(t)=\eps^{-2}L_0(\pjeps_t \Xeps)dt+G(\pjeps_t \Xeps)dt+ \sigma dW(t)$ were studied in  \cite{LNNSN_SDDEavg} \emph{but the coefficient $G$ was assumed to be Lipschitz}. A quantity $\hprc^\eps$ was identified which, roughly speaking, gives the amplitude of oscillations of $\Xeps$. It was shown that the distribution of $\hprc^\eps$ converges weakly to the distribution of a process $\hprc^0$ governed by a stochastic differential equation (SDE) \emph{without delay}. For small $\eps$, this $\hprc^0$ gives good approximation for the dynamics of $\Xeps$. The advantage is three fold: (i) equations without delay are easier to analyze, (ii) for numerical simulations, $\Xeps$ requires storage of $\pjeps_t\Xeps$ (the entire segment) whereas $\hprc^0$ requires just the storage of current value $\hprc^0_t$, (iii) numerical simulation of $\Xeps$ requires very small time-step for integration because the drift coefficient is of the order $\eps^{-2}$, whereas $\hprc^0$ does not require  such a small time-step.

In this article we relax the Lipschitz assumption on the coefficient $G$ for the additive noise case. Note that the presence of $\nu_3$ in \eqref{eq:considerMAINadd} makes $G$ non-Lipschitz. The process $\hprc^\eps$ mentioned above encodes information only about the critical eigenspace (space spanned by the eigenvectors corresponding to the imaginary roots of the characteristic equation), and to obtain the convergence to $\hprc^0$ one needs to show that the projection of $\Xeps$ onto stable eigenspace is small (details would be provided later). In \cite{LNNSN_SDDEavg} this was easy to show because of the Lipschitz condition on $G$. In this article we need to follow a different approach.

The case of multiplicative noise \eqref{eq:considerMAINmul} is also considered here. However, for the multiplicative noise case the Lipschitz condition could not be relaxed. The presence of cubic nonlinearites causes the following problem: in trying to estimate a moment of certain order we encounter terms with higher order moments. 

\cite{LNPRLE} discusses the approaches in the literature towards SDDE at the verge of instability and shows the mistakes and shortcomings of those approaches (see section 1 and appendix A of \cite{LNPRLE}). Hence, here we refrain from mentioning these works again.

Though here we discuss rigorously only $\R$-valued processes, the multi-dimensional processes are dealt with in \cite{LNPRLE} without proofs. An applications-oriented reader would benefit from \cite{LNPRLE} rather than this article.

Before stating the goals of this paper, we give a brief overview of the unperturbed system \eqref{eq:detDDE}, and the variation-of-constants formula relating the solutions of \eqref{eq:considerMAINadd} and \eqref{eq:considerMAINmul} with \eqref{eq:detDDE}. The material in section \ref{subsec:unpertsys_overview} can be found in chapter 7 of \cite{Halebook} (see also \cite{Diekmanbook}).

\subsection{The unperturbed system \eqref{eq:detDDE}}\label{subsec:unpertsys_overview}

The solution of \eqref{eq:detDDE} gives rise to the strongly continuous semigroup $T(t):\C\to \C, \, t\geq 0$, defined by  
$T(t)\pj_0x=\pj_t x$.

The state space $\C$ can be split in the form $\C=P\oplus Q$ where $P=\text{span}_{\R}\{\Phi_1,\,\Phi_2\}$ where 
$$\Phi_1(\theta)=\cos(\om_c\theta), \qquad \Phi_2(\theta)=\sin(\om_c\theta), \qquad \theta\in[-r,0].$$
Write $\Phi=[\Phi_1,\,\Phi_2]$. Any $\eta \in P$ can be written as $\Phi z=z_1\Phi_1+z_2\Phi_2$ for $z\in \R^2$, i.e. $\Phi$ is a basis for the two-dimensional space $P$ and the $z$ are coordinates of $\eta \in P$ with respect to the basis $\Phi$.

Let $\pi$ denote the projection of $\C$ onto $P$ along $Q$, i.e. $\pi:\C\to P$ with $\pi^2=\pi$ and $\pi(\eta)=0$ for $\eta\in Q$. The operator $\pi$ can be written down explicitly, but we would not need the explicit form.

\subsubsection{Behaviour of the solution on $P$ and $Q$}

It is easy to see that $\pj_tx=\cos(\omega_c(t+\cdot))$ is a solution to \eqref{eq:detDDE} with the initial condition $\pj_0x=\cos(\omega_c\cdot)$, and  $\pj_tx=\sin(\omega_c(t+\cdot))$ is a solution to \eqref{eq:detDDE} with the initial condition $\pj_0x=\sin(\omega_c\cdot)$. Using the identity $\cos(\omega_c(t+\cdot))=\cos(\omega_ct)\cos(\omega_c\cdot)-\sin(\omega_ct)\sin(\omega_c\cdot)$ and the linearity of $L_0$, it can be shown that
\begin{align}\label{eq:TPhi_eq_PhieB}
T(t)\Phi(\cdot)=\Phi(\cdot)e^{Bt}, \qquad B=\left[\begin{array}{cc}0 & \omega_c \\ -\omega_c & 0 \end{array}\right].
\end{align}

There exists positive constants $\kappa$ and $K$ such that  
\begin{align}\label{eq:expdecesti}
||T(t)\phi||\leq Ke^{-\kappa t}||\phi||, \qquad \quad \forall\, \phi \in Q. 
\end{align}
The above is a consequence of the fact that, except for the roots $\pm i\om_c$ all other roots of the characteristic equation have negative real parts.

Write the solution to \eqref{eq:detDDE} as
$$\pj_tx=\pi\pj_tx+(1-\pi)\pj_tx=:\Phi z(t)+y_t$$ where $z$ is $\R^2$-valued and $y$ is $\C$ valued. Then we find that\footnote{Multiply \eqref{eq:TPhi_eq_PhieB} by $z(0)$ and realize (using the fact $T$ commutes with $\pi$) that $T(t)\Phi(\cdot)z(0)=T(t)\pi \pj_0x=\pi T(t)\pj_0x=\pi \pj_tx=\Phi z(t)$ to get that $\Phi z(t)=\Phi e^{Bt}z(0)$ from which $\dot{z}=Bz$ follows.} $z$ oscillate harmonically according to $\dot{z}(t)=Bz(t)$ and $||y_t||$ decays exponentially fast, i.e.
\begin{align}\label{eq:expdecunpertsys}
||y_t||\,\,\leq\,\,Ke^{-\kappa t}||y_0||.
\end{align}

\subsection{The variation-of-constants formula}
The solution of the perturbed systems \eqref{eq:considerMAINadd} or \eqref{eq:considerMAINmul} can be expressed in terms of the solution of \eqref{eq:detDDE} with the initial condition $\pj_0x=\Ind$ where 
$$\Ind(\theta)=\begin{cases} 1, \qquad \theta=0, \\
0, \qquad \theta \in [-r,0).\end{cases}$$
However, note that $\Ind$ does not belong to $\C$ and so we need to extend the space $\C$ to accommodate the discontinuity.

See p.192-193, 206-207 of \cite{SEAM_book} for the results pertaining to the extension. 
Let $\Ch:=\hat{C}([-r,0];\R)$ be the Banach space of all bounded measurable maps $[-r,0]\to \R$, given the sup norm. 
Solving the unperturbed system \eqref{eq:detDDE} for initial conditions in $\Ch$, we can extend the semigroup $T$ to one on $\Ch$. Denote the extension by $\Th$.
Again $\Ch$ splits in the form $\Ch=P\oplus \hat{Q}$. The projection $\pi$ can be extended to $\Ch$. The extension is denoted by $\pih$.  There exists a two component column vector $\Psiz \in \R^2$ such that
\begin{align}\label{eq:piIdeqPsiz}
\pih\Ind = \Phi \Psiz = \Psiz_1\Phi_1 + \Psiz_2\Phi_2. 
\end{align}
Also, there exists positive constants $\kappa$ and $K$ such that  
\begin{align}\label{eq:expdecesti}
||\Th(t)\phi||\leq Ke^{-\kappa t}||\phi||, \qquad \quad \forall\, \phi \in \hat{Q}. 
\end{align}

\subsubsection{Additive noise case} The solution of \eqref{eq:considerMAINadd} can be represented as (see theorem IV.4.1 on page 200 in \cite{SEAM_book})
\begin{align}\label{eq:vocformstateW_full_addn}
\hpex{t} \,\,=\,\,\Th(t/\eps^2)\hpex{0}\,\,&+\,\,\int_0^t\Th(\frac{t-u}{\eps^2})\Ind G(\hpex{u})du \,\,+\,\,\int_0^t\Th(\frac{t-u}{\eps^2})\Ind\, \sigma dW_u.
\end{align} 
The third term in the RHS of \eqref{eq:vocformstateW_full_addn} is an element in $\C$ and its value at $\theta \in [-r,0]$ is given by
$\int_0^t\left(\Th(\frac{t-u}{\eps^2})\Ind\right)(\theta)\, \sigma dW_u.$  Write $$\hpex{t}=\Phi z^\eps_t+y^\eps_t.$$ Here $(y^\eps_t)_{t\geq 0}$ is the $\C$-valued process $y^\eps_t=(1-\pi)\hpex{t}$ and $\Phi z_t = \pi\hpex{t}$. Note that $z$ is $\R^2$-valued process. Taking projection of \eqref{eq:vocformstateW_full_addn} onto the space $P$, and using the facts  (i) $\pi\hpex{t}=\Phi z^\eps_t$, (ii) $\Th$ commutes with $\pih$, (iii)  $\pih \Ind = \Phi \Psiz$, (iv) $\Th(t)\Phi z =\Phi e^{tB}z$, we get for $z^\eps$ (see corollary IV.4.1.1 on page 207 in \cite{SEAM_book})
\begin{align}\label{eq:zproj_addn}
dz^\eps_t=\eps^{-2}Bz^\eps_tdt+\Psiz G(\Phi z^\eps_t+y^\eps_t)dt+\Psiz \sigma dW_t, \qquad \Phi z^\eps_0=\pi \hpex{0}.
\end{align}
Using the fact that $\Th$ commutes with $\pih$, $y^\eps_t$ satisfies
\begin{align}\label{eq:vocformstateW_addn}
y^\eps_t \,\,=\,\,\Th(t/\eps^2)y^\eps_0\,\,&+\,\,\int_0^t\Th(\frac{t-u}{\eps^2})(1-\pih)\Ind G(\Phi z^\eps_u+y^\eps_u)du \,\,+\,\,\int_0^t\Th(\frac{t-u}{\eps^2})(1-\pih)\Ind\, \sigma dW_u.
\end{align}

\subsubsection{Multiplicative noise case} The solution of \eqref{eq:considerMAINmul} can be represented in a form analogous to \eqref{eq:vocformstateW_full_addn} with $\sigma$ replaced by $L_1(\hpex{u})$ (see \cite{Reis_Emery_Ineq_voc_for_SDDE}):
\begin{align}\label{eq:vocformstateW_full}
\hpex{t} \,\,=\,\,\Th(t/\eps^2)\hpex{0}\,\,&+\,\,\int_0^t\Th(\frac{t-u}{\eps^2})\Ind G(\hpex{u})du \,\,+\,\,\int_0^t\Th(\frac{t-u}{\eps^2})\Ind\, L_1(\hpex{u}) dW_u.
\end{align} 
For the projections onto $P$ and $Q$ we have:
\begin{align}\label{eq:zproj_muln}
dz^\eps_t=\eps^{-2}Bz^\eps_tdt+\Psiz G(\Phi z^\eps_t+y^\eps_t)dt+\Psiz L_1(\Phi z^\eps_t+y^\eps_t) dW, \qquad \Phi z^\eps_0=\pi \hpex{0},
\end{align}
\begin{align}\label{eq:vocformstateW}
y^\eps_t \,\,=\,\,\Th(t/\eps^2)y^\eps_0\,\,&+\,\,\int_0^t\Th(\frac{t-u}{\eps^2})(1-\pih)\Ind G(\Phi z^\eps_u+y^\eps_u)du\\ \notag  & \,\,+\,\,\int_0^t\Th(\frac{t-u}{\eps^2})(1-\pih)\Ind\, L_1(\Phi z^\eps_u+y^\eps_u) dW_u.
\end{align}

Crucial role would be played in proofs by
\begin{align}\label{eq:stabsoldefprob}
\stabsol(t):=(\Th(t)(1-\pih )\Ind)(0).
\end{align}
From \eqref{eq:expdecesti} we already know that 
\begin{align}\label{eq:stabsolexpdecesti}
|\stabsol(t)|\leq Ke^{-\kappa t}||(1-\pih )\Ind||, \qquad t\geq 0.
\end{align}
 Further, for $t> 0$ 
\begin{align}\label{eq:expdecayforhprime}
\left|\frac{d}{dt}\stabsol(t)\right|=\left|L_0\bigg(\Th(t)(1-\pih)\Ind\bigg)\right|\leq ||L_0||_{op}\,||\Th(t)(1-\pih)\Ind||\leq ||L_0||_{op}\,||(1-\pih)\Ind||\,Ke^{-\kappa t}.
\end{align}
Thus, both $\stabsol$ and $\stabsol'$ have exponential decay.

\subsection{Goal of this paper}\label{sec:subsec:goals}
Let $\Xeps$ evolve according to either \eqref{eq:considerMAINadd} or \eqref{eq:considerMAINmul}. Write $\hpex{t}=\Phi z^\eps_t+y^\eps_t,$ and define
\begin{align}\label{eq:ydecstbpdef}
\ydec^\eps_t:=\Th(t/\eps^2)y^\eps_0, \qquad \stbp^\eps_t:=y^\eps_t-\ydec^\eps_t.
\end{align}

Note that $\ydec^\eps_t$ depends only on the unperturbed system \eqref{eq:detDDE}. Given the initial condition $\pjeps_0 \Xeps$, $\ydec^\eps_t$ is a deterministic quantity. Note that $||\ydec^\eps_t||$ decays exponentially fast:
\begin{align}\label{eq:ydecexpfastdec}
||\ydec^\eps_t||\,\,\leq\,\,Ke^{-\kappa t/\eps^2}||(1-\pi)\pjeps_0 \Xeps||.
\end{align}

\subsubsection{Goal for the multiplicative noise case \eqref{eq:considerMAINmul}}\label{subsubsec:goalsformultipnoise}
Roughly speaking, the goals are
\begin{enumerate}[label=(\roman*)]
\item Show that, until time $T>0$, $\expt  \sup_{t\in[0,T]}||\stbp^{\eps}_t||^n\,\,\xrightarrow{\eps\to 0}0$, so that we can approximate  $y^\eps_t$ with the deterministic quantity $\ydec^\eps_t$
\item Consider the process 
\begin{align}\label{eq:zproj_muln_nodel}
d{\bf z}^\eps_t=\eps^{-2}B{\bf z}^\eps_tdt+\Psiz G(\Phi {\bf z}^\eps_t)dt+\Psiz L_1(\Phi {\bf z}^\eps_t) dW, \qquad \Phi {\bf z}^\eps_0=\pi \hpex{0}.
\end{align}
Note that ${\bf z}^\eps$ is two-dimensional process without delay totally ignoring $y^\eps$. 
Show that 
\begin{align}\label{eq:goalsformultipnoise_z}
\expt \sup_{t\in [0,T]}||{\bf z}^\eps_t-z^\eps_t||_2^2 \xrightarrow{\eps\to 0} 0
\end{align}
 where $||\cdot||_2$ is $\ell_2$ norm of vectors in $\R^2$.
\end{enumerate}

The above tasks justify the approximation of $\hpex{t}$ by $\Phi {\bf z}^\eps_t + \ydec^\eps_t$ for small $\eps$. Note that ${\bf z}^\eps$ is a two-dimensional process without delay and $\ydec^\eps_t$ is an exponentially decaying deterministic process. For small $\eps$ one could study this non-delay system instead of the original stochastic DDE \eqref{eq:considerMAINmul}. The advantage is that the 2-dimensional system without delay would be easier to analyze or simulate numerically. 

Further simplification can be obtained by studying the process 
\begin{align}\label{eq:hprcdefz2n}
\hprc^\eps_t:=\frac12||z^\eps_t||_2^2. 
\end{align}
Roughly speaking\footnote{Writing $\hpex{t}=\Phi z(t)+y_t$ and showing $y$ is small, we can write $\Xeps(t)=\hpex{t}(0)\approx \Phi(0)z_t = (z_t)_1$. Since the dynamics of $z$ is small perturbation of a predominant oscillation according $\dot{z}=Bz$, the approximate amplitude of $(z)_1$ is $\sqrt{(z)_1^2+(z)_2^2}=||z||_2=\sqrt{2\hprc}$.}, $\sqrt{2\hprc^\eps}$ \emph{is the amplitude of oscillations of} $\Xeps$. We will show that there is a constant $C$ such that 
\begin{align}\label{eq:hprcconvgreqzbound}
\expt \sup_{t\in [0,T]}||{\bf z}^\eps_t||_2^2<C
\end{align}
 for all $\eps$ smaller than some $\eps_*$. Using \eqref{eq:hprcconvgreqzbound} and \eqref{eq:goalsformultipnoise_z} it follows that 
\begin{align}\label{eq:hprcconvgmul}
\expt \sup_{t\in [0,T]}|\hprc^\eps_t-\frac12||{\bf z}^\eps_t||_2^2| \xrightarrow{\eps\to 0} 0.
\end{align}
 One can use standard averaging techniques for stochastic differential equations (without delay) to show that the distribution of $\frac12||{\bf z}^\eps||_2^2$ converges to the distribution of some one-dimensional process $\hprc^0$ without delay. 
By theorem 3.1 in \cite{Billingsley1999}, the distribution of $\hprc^\eps$ converges to the distribution of $\hprc^0$. 
For small $\eps$, the distribution of $\hprc^0$ gives a good approximation to the distribution of $\hprc^\eps$. The advantages of having $\hprc^0$ were mentioned in section \ref{sec:intro}.

\subsubsection{Goal for the additive noise case \eqref{eq:considerMAINadd}}\label{subsubsec:goal4addnoisecase}
The presence of cubic nonlinearites causes the following problem: in trying to estimate a moment of certain order we face the task of estimating terms with higher order moments. So the approach taken for \eqref{eq:considerMAINmul} does not work here. We take the following approach.

Recall the projection operator $\pi:\C \to P$. 
 Fix a constant $C_{\stopt}>0$ and define the stopping time $\stopteps=\inf\{t\geq 0:||\pi \pjeps_t\Xeps||\geq C_{\stopt}\}$. (Note that the stopping time depends on $\eps$). 
\begin{itemize}
\item Show that for $t\in [0,T\wedge\stopteps]$, $||\stbp^\eps_t||$ is small with high probability
\item Define a 2-dimensional process ${\bf z}^\eps$ as
\begin{align*}
d{\bf z}^\eps_t=\eps^{-2}B{\bf z}^\eps_tdt+\Psiz G(\Phi {\bf z}^\eps_t)dt+\Psiz \sigma dW, \qquad \Phi {\bf z}^\eps_0=\pi \hpex{0}.
\end{align*}
Note that ${\bf z}^\eps$ is a 2 dimensional process without delay.
Show that for $t\in [0,T\wedge\stopteps]$, error in approximating $z^\eps$ by ${\bf z}^\eps$ is small with high probability
\item Using estimates on ${\bf z}^\eps$ process, get rid of the stopping time and obtain approximation results until time $T$, by leveraging some arbitrarily small probability.
\end{itemize}

The stopping time helps in arriving at a bound on the norm of stable-mode $(1-\pi)\pjeps_t\Xeps$ without worrying about what happens to the critical-mode $\pi\pjeps_t\Xeps$.

\vspace{12pt}

Examples illustrating the usefulness of the above approximation results are shown in sections \ref{subsec:mul_examp} and \ref{sec:example}.

For related work on stochastic partial differential equations see \cite{Blom_amp_eq}. However note that in \cite{Blom_amp_eq} the bifurcation scenario is different---analogous situation in the DDE framework would be if one root of characteristic equation is zero and all other roots have negative real parts.

\section{Multiplicative noise}\label{sec:mulnoi}
In this section we consider \eqref{eq:considerMAINmul} with $T>0$ fixed. The constants here can depend on $T$.

The first goal is to show that $\expt \sup_{t\in [0,T]}||\stbp^{\eps}_t||^n\to 0$, which is the content of proposition \ref{prop:mul:stabnorm}. For this purpose, we use the variation of constants formulas \eqref{eq:vocformstateW_full}--\eqref{eq:vocformstateW}.
Recalling the definition of $\stbp^{\eps}$ from \eqref{eq:ydecstbpdef}, to estimate $\expt \sup_{t\in [0,T]}||\stbp^{\eps}_t||^n$, we need to estimate the last two terms on the RHS of \eqref{eq:vocformstateW}. 

Roughly speaking, the integral in the last term of RHS of \eqref{eq:vocformstateW} can be split as $\int_0^s=\int_0^{s-\eps^{\delta}r}+\int_{s-\eps^{\delta}r}^s$ with $0<\delta<2$. For $\int_0^{s-\eps^{\delta}r}$ we can use exponential decay of $\Th$ on $\hat{Q}$. For $\int_{s-\eps^{\delta}r}^s$, making note that the length of the interval of integration is small ($r\eps^\delta$), we need to be concerned with increments of Wiener process over small intervals, i.e. the modulus of continuity of the Wiener process.

Lemma \ref{lem:mul:quest:supuntilTisconst} is needed to be able to use the results of \cite{FischerNappo} on moments of modulus-of-continuity of Ito processes. Using results from \cite{FischerNappo}, proposition \ref{prop:mul:supUpsT} shows that the stochastic term in \eqref{eq:vocformstateW} is small. Then, straight forward estimation yields proposition \ref{prop:mul:stabnorm} which is the result that we need.

\begin{lem}\label{lem:mul:quest:supuntilTisconst}
Fix $n\geq 0$. There exists constants $\mathfrak{C}>0$ and $\eps_*>0$ such that $\forall \eps<\eps_*$,  
\begin{align}\label{eq:quest2}
\expt \sup_{t\in[0,T]}||\pjeps_t\Xeps||^n<\mathfrak{C}.
\end{align}
\end{lem}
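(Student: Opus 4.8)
\noindent\textit{Proof strategy.}
The plan is to bootstrap from the variation--of--constants formula \eqref{eq:vocformstateW_full} rather than from the equation \eqref{eq:considerMAINmul} itself: a direct Gronwall argument on \eqref{eq:considerMAINmul} is defeated by the $O(\eps^{-2})$ drift, whereas in \eqref{eq:vocformstateW_full} the semigroup $\Th$ enters only through the bounded rotation $\Phi e^{\cdot B}$ on $P$ and through its exponentially decaying action on $\hat Q$; in particular $M_0:=\sup_{s\ge0}||\Th(s)||_{op}<\infty$ and $M:=\sup_{s\ge0}||\Th(s)\Ind||<\infty$. Write $\hpex{t}=A_t+B_t+C_t$ for the three terms on the right of \eqref{eq:vocformstateW_full}, set $\Psi(t):=\sup_{u\in[0,t]}||\hpex{u}||$ and $F(t):=\expt\,\Psi(t)^n$. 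Since $L_0,L_1$ are bounded linear and $G$ is Lipschitz with $G(0)=0$, for each fixed $\eps$ the coefficients of \eqref{eq:considerMAINmul} are globally Lipschitz, so standard SDDE theory gives $F(T)<\infty$ (with a bound that may explode as $\eps\to0$; this is irrelevant and serves only to legitimise the Gronwall step, or one localises with $\tau_R=\inf\{t:||\hpex{t}||\ge R\}$). It suffices to treat $n>2$, as $n=0$ is trivial and $0<n\le2$ follows from the case $n=3$ by Jensen. The target is the estimate $F(t)\le C_1+C_2\int_0^t F(u)\,du$ on $[0,T]$ with $C_1,C_2$ independent of $\eps$ (for $\eps$ below some $\eps_*$), whence Gronwall yields $F(T)\le C_1e^{C_2T}=:\mathfrak{C}$.

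The two deterministic--looking terms are routine. Since $A_t=\Th(t/\eps^2)\xi$, one has $\sup_t||A_t||\le M_0||\xi||$, a fixed constant. For $B$, using $G(0)=0$ and the Lipschitz bound, $||B_t||\le MK_G\int_0^t||\hpex{u}||\,du\le MK_G\int_0^t\Psi(u)\,du$, so by Jensen $\expt\sup_{u\le t}||B_u||^n\le(MK_G)^nT^{n-1}\int_0^tF(u)\,du$. Both are of the required shape; the substance is the stochastic term $C_t=\int_0^t\Th(\tfrac{t-u}{\eps^2})\Ind\,L_1(\hpex{u})\,dW_u$.

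For $C$ I would split $\Ind=\pih\Ind+(1-\pih)\Ind=\Phi\Psiz+(1-\pih)\Ind$. The $\Phi\Psiz$--part of $C_t$ equals $\Phi e^{tB/\eps^2}N_t$ with $N_t:=\int_0^te^{-uB/\eps^2}\Psiz\,L_1(\hpex{u})\,dW_u$ a genuine $\R^2$--valued martingale; because $e^{tB/\eps^2}$ is a rotation, $\sup_{u\le t}||\Phi e^{uB/\eps^2}N_u||\le\sup_{u\le t}||N_u||_2$, so Burkholder--Davis--Gundy and Jensen bound its $n$-th moment by $C\int_0^tF(u)\,du$. The piece $C^Q_t:=\int_0^t\Th(\tfrac{t-u}{\eps^2})(1-\pih)\Ind\,L_1(\hpex{u})\,dW_u$ is a stochastic convolution against the rescaled semigroup $\tau\mapsto\Th(\tau/\eps^2)$, and I would control it by the factorization method: with $\alpha\in(1/n,1/2)$ (available since $n>2$) and $Z_r:=\int_0^r(r-u)^{-\alpha}\Th(\tfrac{r-u}{\eps^2})(1-\pih)\Ind\,L_1(\hpex{u})\,dW_u$, stochastic Fubini together with the semigroup law $\Th(\tfrac{t-r}{\eps^2})\Th(\tfrac{r-u}{\eps^2})=\Th(\tfrac{t-u}{\eps^2})$ and $\int_u^t(t-r)^{\alpha-1}(r-u)^{-\alpha}dr=\pi/\sin\pi\alpha$ give $C^Q_t=\tfrac{\sin\pi\alpha}{\pi}\int_0^t(t-r)^{\alpha-1}\Th(\tfrac{t-r}{\eps^2})Z_r\,dr$. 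Since $Z_r\in\hat Q$ one has $||\Th(\sigma)Z_r||\le Ke^{-\kappa\sigma}||Z_r||\le K||Z_r||$ uniformly in $\sigma\ge0$ and in $\eps$ --- this is exactly where the dangerous $\eps^{-2}$ is rendered harmless --- so Hölder in $r$ (legitimate as $\alpha>1/n$) gives $||C^Q_s||^n\le C_{n,\alpha,T}\int_0^t||Z_r||^n\,dr$ for every $s\le t$, and the supremum over $s$ now costs nothing. It remains to check $\expt||Z_r||_{\Ch}^n\le C\int_0^r(r-u)^{-2\alpha}F(u)\,du$ with $\eps$--free $C$: here $(\Th(\sigma)(1-\pih)\Ind)(\theta)=\tilde\stabsol(\sigma+\theta)$ where $\tilde\stabsol$ is the bounded extension of $\stabsol$ (see \eqref{eq:stabsoldefprob}) to $[-r,\infty)$, equal to $((1-\pih)\Ind)(\cdot)$ on $[-r,0)$, whose only discontinuity is a unit jump at $0$; writing $\tilde\stabsol=q+\mathbf 1_{[0,\infty)}$ with $q$ bounded and Lipschitz, and noting $(r-u)/\eps^2\ge0$, the $\mathbf 1_{[0,\infty)}$--part of $Z_r(\theta)$ is $\int_0^{r+\eps^2\theta}(r-u)^{-\alpha}L_1(\hpex{u})\,dW_u$, whose supremum over $\theta$ is dominated by that of an honest martingale in the upper limit (BDG), while the $q$--part is Lipschitz in $\theta$, to which Kolmogorov's continuity theorem on $[-r,0]$ applies. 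Substituting, and interchanging $\int_0^t\int_0^r$ with $\int_0^t\int_u^t$ so that $\int_u^t(r-u)^{-2\alpha}dr\le T^{1-2\alpha}/(1-2\alpha)$, yields $\expt\sup_{u\le t}||C^Q_u||^n\le C_3\int_0^tF(u)\,du$ with $C_3$ independent of $\eps$. Adding the three contributions gives the integral inequality for $F$, and Gronwall concludes.

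The main obstacle is the stochastic term. A naive moment bound is blocked by the $\eps^{-2}$ drift, and the modulus--of--continuity estimate that will be used later for Proposition~\ref{prop:mul:supUpsT} presupposes precisely the uniform moment bound being proved here, so a self--contained argument is required; its delicate point is to produce a bound proportional to $\int_0^tF(u)\,du$ --- not to $\sup_{u\le t}F(u)$, which would fail to close the Gronwall loop --- while a semigroup scaled by $\eps^{-2}$ sits inside a stochastic convolution. The factorization method is tailored to this, since it converts the supremum in time into an $L^n(dt)$--norm and leaves the fast semigroup acting only on $\hat Q$, where it is uniformly bounded, so that all constants are $\eps$--free; the one genuinely technical step is the bound on $\expt||Z_r||^n_{\Ch}$, which forces one to separate the jump of $\tilde\stabsol$ at $0$ from a Lipschitz remainder and to control the sup over $\theta\in[-r,0]$.
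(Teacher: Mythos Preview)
Your argument is correct, but the paper's proof is considerably more elementary on the one point where you invest real effort, namely the $\hat Q$--part of the stochastic convolution. The paper does not take the sup--norm of the segment directly; it observes that $\sup_{t\in[0,T]}||\pjeps_t\Xeps||\le ||\xi||\vee\sup_{s\in[0,T]}|\Xeps_s|$ and so works only with the scalar $\Xeps_s=(\hpex{s})(0)$. For the $\hat Q$--stochastic term $A_{q,s}=\int_0^s\stabsol((s-u)/\eps^2)\,dZ_u$ (with $Z_u=\int_0^uL_1(\hpex{v})dW_v$) it integrates by parts once,
\[
A_{q,s}=\stabsol(0)Z_s+\int_0^s\eps^{-2}\stabsol'\!\left(\tfrac{s-u}{\eps^2}\right)Z_u\,du,
\]
and uses $\int_0^s\eps^{-2}|\stabsol'((s-u)/\eps^2)|\,du\le\int_0^\infty|\stabsol'(v)|\,dv\le \widetilde K\,||L_0||/\kappa$ from \eqref{eq:expdecayforhprime} to get $\sup_{s\le t}|A_{q,s}|\le C\sup_{s\le t}|Z_s|$; a single BDG on $Z$ then gives the $\int_0^tF(u)\,du$ bound. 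Your route via the Da~Prato--Zabczyk factorization, the decomposition $\tilde\stabsol=q+\mathbf{1}_{[0,\infty)}$, and Kolmogorov continuity for the $q$--piece is valid and keeps all constants $\eps$--free, but it is heavier machinery tailored to infinite--dimensional semigroups; in the present scalar setting the one--line integration--by--parts replaces the whole factorization apparatus and the separate sup--over--$\theta$ analysis. The $P$--part and the deterministic terms are handled the same way in both proofs.
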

Proof is given in appendix \ref{apsec:prf:lem:mul:quest:supuntilTisconst}. Note that, though one of the drift coefficients in \eqref{eq:considerMAINmul} is of the order $\eps^{-2}$, the constant $\mathfrak{C}$ above does not depend on $\eps$. Proof uses: (i) the variation-of-constants formula to exploit the exponential decay \eqref{eq:stabsolexpdecesti} and \eqref{eq:expdecayforhprime} on $\hat{Q}$, and oscillatory behaviour on $P$; (ii) Burkholder-Davis-Gundy inequality to estimate supremum of martingales by their quadratic-variation; and then (iii) Gronwall inequality.

\begin{defn}\label{def:modcont}
Define the modulus of continuity for $f:[0,\infty)\to \R$ by
$$\modcont(a,b;f)=\sup_{\substack{|u-v|\,\leq\, a \\ u,v\,\in\, [0,b]}}|f(u)-f(v)|.$$
\end{defn}

Define 
\begin{align}\label{eq:UpsandZdef}
\Upsilon^\eps_s:=\sup_{\theta \in [-r,0]}\left|\int_0^s\left(\Th(\frac{s-u}{\eps^2})(1-\pih)\Ind\right)(\theta) dZ_u\right|, \qquad Z_t:=\int_0^tL_1(\hpex{s}) d{W}_s.
\end{align}
Note that $Z$ dependens also on $\eps$.

\begin{prop}\label{prop:mul:supUpsT}
Fix $n\geq 1$. There exists constant $\hat{C}>0$ and a family of constants $\hat{\eps}_{\delta}>0$ (indexed by $0<\delta<2$) such that, given  $\delta \in (0,2)$ we have for $\eps < \hat{\eps}_{\delta}$  
\begin{align}\label{prop:mul:supUpsT_statement_mully}
\expt \sup_{s\in [0,T]}(\Upsilon^\eps_s)^n \,\,\,\leq\,\,\,\hat{C}\left({r\eps^\delta}\ln\left(\frac{2T}{r\eps^\delta}\right)\right)^{n/2} \,\,\xrightarrow{\eps\to 0} 0. 
\end{align} 
\end{prop}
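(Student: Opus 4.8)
The strategy is to follow the splitting of the integral sketched in the text: for each $s \in [0,T]$, write $\int_0^s (\cdots) dZ_u = \int_0^{(s-r\eps^\delta)^+} (\cdots) dZ_u + \int_{(s-r\eps^\delta)^+}^s (\cdots) dZ_u$. On the "old" part $[0,(s-r\eps^\delta)^+]$, the kernel $\big(\Th(\tfrac{s-u}{\eps^2})(1-\pih)\Ind\big)(\theta)$ is evaluated at times $s-u \geq r\eps^\delta$, so by \eqref{eq:expdecesti} (applied on $\hat Q$) its sup-norm is bounded by $K e^{-\kappa r\eps^{\delta-2}}\|(1-\pih)\Ind\|$, which decays faster than any power of $\eps$. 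For this term I would use Burkholder--Davis--Gundy to bound the $n$-th moment of the sup over $s$ of the stochastic integral by (a constant times) the $n$-th moment of the quadratic variation $\int_0^s |\text{kernel}|^2 \, d\langle Z\rangle_u$; the quadratic variation density is $|L_1(\hpex{u})|^2 \leq K_{L_1}^2 \|\pjeps_u\Xeps\|^2$ (using $G(0)=0$, $L_1$ Lipschitz — or rather the stated Lipschitz bound on $G$; for $L_1$ I'd use the analogous bound, and in any case $\|L_1\|_{op}$), and Lemma \ref{lem:mul:quest:supuntilTisconst} controls $\expt\sup_{u\le T}\|\pjeps_u\Xeps\|^n$. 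So this contribution is $O(e^{-c\eps^{\delta-2}})$, negligible.

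The main work is the "recent" part $\int_{(s-r\eps^\delta)^+}^s \big(\Th(\tfrac{s-u}{\eps^2})(1-\pih)\Ind\big)(\theta)\, dZ_u$. Here the interval has length at most $r\eps^\delta$, and the kernel is uniformly bounded in sup-norm by $\|(1-\pih)\Ind\|$. I would rewrite this via integration by parts (Itô) to convert it into the increment $Z_s - Z_{(s-r\eps^\delta)^+}$ plus a Riemann--Stieltjes integral against $\tfrac{d}{du}\big(\Th(\tfrac{s-u}{\eps^2})(1-\pih)\Ind\big)(\theta) = -\eps^{-2}\big(L_0 \Th(\tfrac{s-u}{\eps^2})(1-\pih)\Ind\big)$ — note this derivative has size $O(\eps^{-2})$ but again only over a window of length $r\eps^\delta$, and using \eqref{eq:expdecayforhprime}-type bounds its total-variation over that window is $O(\eps^{\delta-2})$ times... this is getting delicate, so alternatively, and more cleanly, I would bound the recent integral directly in terms of the modulus of continuity: $\sup_\theta \big|\int_{(s-r\eps^\delta)^+}^s \text{(bounded kernel)} dZ_u\big|$ is controlled, after an Itô integration by parts, by $C\,\modcont(r\eps^\delta, T; Z)$ where the constant absorbs $\|(1-\pih)\Ind\|$ and the total-variation-in-$u$ of the kernel over the window (the latter, by \eqref{eq:stabsolexpdecesti}-\eqref{eq:expdecayforhprime} applied to $\Th(\cdot)(1-\pih)\Ind$, is bounded uniformly in $\eps$ because $\int_{s-r\eps^\delta}^s \eps^{-2}\|L_0\Th(\tfrac{s-u}{\eps^2})(1-\pih)\Ind\|\,du = \int_0^{r\eps^{\delta-2}} \|L_0\Th(v)(1-\pih)\Ind\|\,dv \le \int_0^\infty(\cdots)dv < \infty$). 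Thus $\sup_{s\in[0,T]} \Upsilon^\eps_s \le C\,\modcont(r\eps^\delta, T; Z) + O(e^{-c\eps^{\delta-2}})$.

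It then remains to estimate $\expt\big(\modcont(r\eps^\delta, T; Z)\big)^n$. Here $Z_t = \int_0^t L_1(\hpex{u})\,dW_u$ is an Itô process (a local martingale with quadratic-variation density bounded by $K_{L_1}^2\|\pjeps_u\Xeps\|^2$, whose moments are controlled by Lemma \ref{lem:mul:quest:supuntilTisconst}). This is exactly the setting of the results of \cite{FischerNappo} on moments of the modulus of continuity of Itô processes: they give $\expt\big(\modcont(a,b;Z)\big)^n \le C\,(a\ln(2b/a))^{n/2}$ for $a$ small, with the constant depending on the bound on the diffusion coefficient's moments — which is why Lemma \ref{lem:mul:quest:supuntilTisconst} was needed. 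Applying this with $a = r\eps^\delta$, $b = T$ gives the stated bound $\hat C\,(r\eps^\delta \ln(2T/(r\eps^\delta)))^{n/2}$, and the $\eps\to 0$ convergence is immediate; the threshold $\hat\eps_\delta$ enters to ensure $r\eps^\delta$ is small enough for the \cite{FischerNappo} estimate (and $\eps^{\delta-2}$ large enough for the "old" part to be dominated).

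The main obstacle, I expect, is the bookkeeping in the integration-by-parts step: one must carefully check that the boundary term at $u = (s-r\eps^\delta)^+$ and the total-variation of the $\eps^{-2}$-scaled kernel over the shrinking window really do combine into a clean multiple of $\modcont(r\eps^\delta,T;Z)$ with an $\eps$-\emph{independent} constant, and that the sup over $\theta\in[-r,0]$ and over $s\in[0,T]$ can be taken inside before invoking \cite{FischerNappo} — i.e. that the quantity being controlled is genuinely a modulus of continuity of the single process $Z$ and not something that still depends on $(s,\theta)$ in an uncontrolled way.
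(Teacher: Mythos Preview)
Your plan is the paper's plan, and your anticipated obstacle is the right one; but there is one concrete point you have not seen, and one minor slip.

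The concrete point is that the kernel $\big(\Th((s-u)/\eps^2)(1-\pih)\Ind\big)(\theta)$ is genuinely piecewise in $u$: it equals $\stabsol\big((s+\eps^2\theta-u)/\eps^2\big)$ only when $s+\eps^2\theta-u\geq 0$; for $u\in[(s+\eps^2\theta)\vee 0,\,s]$ (a window of length at most $\eps^2 r$) it equals the \emph{initial segment} $\big((1-\pih)\Ind\big)\big((s+\eps^2\theta-u)/\eps^2\big)=-\,\pih\Ind\big((s+\eps^2\theta-u)/\eps^2\big)$, which is oscillatory, not exponentially decaying, and there is a unit jump at the transition point coming from $\Ind$. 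Your total-variation bound via \eqref{eq:stabsolexpdecesti}--\eqref{eq:expdecayforhprime} covers only the $\stabsol$ piece. The paper handles this by a preliminary split $\Upsilon^\eps_s\leq\mathscr{J}_1+\mathscr{J}_2$ according to the sign of $s+\eps^2\theta-u$; for $\mathscr{J}_2$ it expands $\pih\Ind(\cdot)=\Psiz_1\cos(\om_c\,\cdot)+\Psiz_2\sin(\om_c\,\cdot)$ via the addition formula and recognises the result as increments, over windows of length $\leq\eps^2 r$, of the auxiliary It\^o processes $M^{c,\eps}_t=\int_0^t\cos(\om_c u/\eps^2)\,dZ_u$ and $M^{s,\eps}_t=\int_0^t\sin(\om_c u/\eps^2)\,dZ_u$, and applies \cite{FischerNappo} to those (not to $Z$). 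This is exactly what resolves your worry about whether the sup over $\theta$ reduces to a single modulus of continuity. Your route---integrating by parts against the full piecewise kernel and bounding everything by $\modcont(r\eps^\delta,T;Z)$---can also be made to work, but you would have to track the jump and the oscillatory piece explicitly; the paper's split is cleaner.

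The minor slip: for the ``old'' part you cannot apply Burkholder--Davis--Gundy directly, because $s\mapsto\int_0^{(s-r\eps^\delta)^+}\stabsol\big((s+\eps^2\theta-u)/\eps^2\big)\,dZ_u$ is not a martingale in $s$ (the integrand depends on $s$). The paper integrates by parts here as well, obtaining $\stabsol(r\eps^{\delta-2})Z_{t-r\eps^\delta}+\eps^{-2}\int_0^{t-r\eps^\delta}\stabsol'\big((t-u)/\eps^2\big)Z_u\,du$, both of which are bounded by $Ce^{-\kappa r\eps^{\delta-2}}\sup_{[0,T]}|Z|$; then BDG and Lemma~\ref{lem:mul:quest:supuntilTisconst} control $\expt\sup|Z|^n$.
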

Proof is given in appendix \ref{apsec:prf:prop:mul:supUpsT}. The essential idea of writing $\int_0^s=\int_0^{s-\eps^{\delta}r}+\int_{s-\eps^{\delta}r}^s$ and using \cite{FischerNappo} is mentioned earlier.


Let $\ydec^\eps_t$ and $\stbp^\eps_t$ be as defined in \eqref{eq:ydecstbpdef}.

\begin{prop}\label{prop:mul:stabnorm}
Fix $n\geq 1$. $\exists \eps_*>0$ such that $\forall\,\eps<\eps_*$,
\begin{align}\label{prop:mul:stabnorm_statement_b}
\expt  \sup_{s\in[0,T]}||\stbp^{\eps}_s||^n\,\,\leq\,\, \eps^{2n}2^{n-1}\left(\frac{K_GK}{\kappa}\right)^n\mathfrak{C}\,+\,2^{n-1}\expt\sup_{s\in [0,T]}(\Upsilon^{\eps}_s)^n\,\,\xrightarrow{\eps\to 0}0,
\end{align}
where $\mathfrak{C}$ is from lemma \ref{lem:mul:quest:supuntilTisconst}.
\end{prop}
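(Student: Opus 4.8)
The plan is to start from the variation-of-constants formula \eqref{eq:vocformstateW} for $y^\eps_t$ and subtract off the decaying term $\ydec^\eps_t = \Th(t/\eps^2)y^\eps_0$ to isolate
\[
\stbp^\eps_t \;=\; \int_0^t\Th\!\Big(\tfrac{t-u}{\eps^2}\Big)(1-\pih)\Ind\, G(\Phi z^\eps_u+y^\eps_u)\,du
\;+\;\int_0^t\Th\!\Big(\tfrac{t-u}{\eps^2}\Big)(1-\pih)\Ind\, L_1(\Phi z^\eps_u+y^\eps_u)\,dW_u .
\]
I would bound $\|\stbp^\eps_s\|$ by the sum of the sup-norms of these two terms, apply $(a+b)^n\le 2^{n-1}(a^n+b^n)$, and take expectation of the supremum over $[0,T]$. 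The second term is, by definition, exactly $\Upsilon^\eps_s$ from \eqref{eq:UpsandZdef} once one observes that $Z_u=\int_0^u L_1(\hpex{s})\,dW_s$ and $\hpex{s}=\Phi z^\eps_s+y^\eps_s$; so its contribution is precisely $2^{n-1}\,\expt\sup_{s\in[0,T]}(\Upsilon^\eps_s)^n$, the second summand on the right-hand side of \eqref{prop:mul:stabnorm_statement_b}.

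For the drift term, the key is the pointwise exponential decay estimate on $\hat Q$: for each $\theta\in[-r,0]$,
\[
\Big|\Big(\Th\!\big(\tfrac{t-u}{\eps^2}\big)(1-\pih)\Ind\Big)(\theta)\Big|
\;\le\; \big\|\Th\!\big(\tfrac{t-u}{\eps^2}\big)(1-\pih)\Ind\big\|
\;\le\; K e^{-\kappa(t-u)/\eps^2}\|(1-\pih)\Ind\|
\]
from \eqref{eq:expdecesti}, though I would actually absorb $\|(1-\pih)\Ind\|$ into the norm bound and use the Lipschitz property of $G$ from \eqref{eq:considerMAINmul}, $|G(\eta)|=|G(\eta)-G(0)|\le K_G\|\eta\|$, to get $|G(\Phi z^\eps_u+y^\eps_u)|\le K_G\|\pjeps_u\Xeps\|$ (using $\hpex{u}=\Phi z^\eps_u+y^\eps_u$ and $\|\pjeps_u\Xeps\|$ controlling $\|\hpex{u}\|$). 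Then
\[
\sup_{s\in[0,T]}\Big\|\int_0^s\Th\!\big(\tfrac{s-u}{\eps^2}\big)(1-\pih)\Ind\, G(\hpex{u})\,du\Big\|
\;\le\; K_G K\sup_{s\in[0,T]}\int_0^s e^{-\kappa(s-u)/\eps^2}\,du\,\cdot\,\sup_{u\in[0,T]}\|\pjeps_u\Xeps\|
\;\le\; \frac{K_GK\eps^2}{\kappa}\sup_{u\in[0,T]}\|\pjeps_u\Xeps\|,
\]
since $\int_0^s e^{-\kappa(s-u)/\eps^2}\,du\le \eps^2/\kappa$. Raising to the $n$-th power, multiplying by $2^{n-1}$, taking expectation, and invoking Lemma \ref{lem:mul:quest:supuntilTisconst} to replace $\expt\sup_{u\in[0,T]}\|\pjeps_u\Xeps\|^n$ by $\mathfrak{C}$, yields the first summand $\eps^{2n}2^{n-1}(K_GK/\kappa)^n\mathfrak{C}$. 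Finally, Proposition \ref{prop:mul:supUpsT} shows the second summand tends to $0$, and the first obviously does because of the $\eps^{2n}$ factor, which gives the convergence claim.

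I do not expect a serious obstacle here — this is the routine assembly step that combines the three ingredients (decay of $\Th$ on $\hat Q$, Lipschitz bound on $G$, and the a priori moment bound of Lemma \ref{lem:mul:quest:supuntilTisconst}) with the already-proved Proposition \ref{prop:mul:supUpsT}. The only mild care needed is in the order of operations: one must pull the supremum over $s$ inside so that the convolution integral is bounded uniformly in $s$ by $\eps^2/\kappa$ times the global supremum of $\|\pjeps_u\Xeps\|$ over $[0,T]$, and only then take the $n$-th power and the expectation, so that Lemma \ref{lem:mul:quest:supuntilTisconst} applies directly. One also needs $\eps_*$ to be the smaller of the thresholds coming from Lemma \ref{lem:mul:quest:supuntilTisconst} and (for the stated bound, though not strictly for this inequality) Proposition \ref{prop:mul:supUpsT}.
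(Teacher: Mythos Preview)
Your proposal is correct and follows essentially the same route as the paper: split $\stbp^\eps_s$ via the variation-of-constants formula into the drift integral and $\Upsilon^\eps_s$, bound the drift integral by $(\eps^2 K_G K/\kappa)\sup_{u\in[0,s]}\|\hpex{u}\|$ using the exponential decay \eqref{eq:expdecesti} and the Lipschitz bound on $G$, take the sup over $s$, raise to the $n$th power with $(a+b)^n\le 2^{n-1}(a^n+b^n)$, and invoke Lemma~\ref{lem:mul:quest:supuntilTisconst} and Proposition~\ref{prop:mul:supUpsT}. The only cosmetic point is that the paper writes the decay constant as $\widetilde K = K\|(1-\pih)\Ind\|$ rather than absorbing $\|(1-\pih)\Ind\|$ silently, but this does not affect the argument.
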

Proof given in appendix \ref{apsec:prf:prop:mul:stabnorm}.

\vspace{16pt}

Recall that when we write $\hpex{t}=\Phi z^\eps_t+y^\eps_t$, the $\R^2$-valued process $z^\eps$ satisfies  equation \eqref{eq:zproj_muln}.
Removing the fast rotation induced by $B$, i.e. writing $\zed_t=e^{-tB/\eps^2}z^\eps_t$ we have
$$d\zed=e^{-tB/\eps^2}\Psiz G(\Phi e^{tB/\eps^2}\zed_t+y^\eps_t)dt+e^{-tB/\eps^2}\Psiz L_1(\Phi e^{tB/\eps^2} \zed_t+y^\eps_t) dW_t, \qquad \zed_0=z^\eps_0.$$
Let $\zedh$ be governed by
$$d\zedh=e^{-tB/\eps^2}\Psiz G(\Phi e^{tB/\eps^2}\zedh_t+\ydec^\eps_t)dt+e^{-tB/\eps^2}\Psiz L_1(\Phi e^{tB/\eps^2} \zedh_t+\ydec^\eps_t) dW_t, \qquad \zedh_0=\zed_0.$$
i.e. we are totally ignoring $y$ part except for the effect of initial condition (note that $\ydec^\eps_t=\Th(t/\eps^2)y^\eps_0$). 

As an intermediate step towards the end goal, we want to show that, until time $T$ the error in approximating $\zed$ by $\zedh$ is small. For this purpose, define 
$$\alpha^\eps_t=\frac12||\zed_t-\zedh_t||_2^2=\frac12((\zed_t-\zedh_t)_1^2+(\zed_t-\zedh_t)_2^2).$$
Here  $(\zed_t-\zedh_t)_i$ denotes the $i^{th}$ component of the $\R^2$-valued vector $\zed_t-\zedh_t$.
Let 
\begin{align}\label{eq:BigGammaDriftCoefDef}
\Gamma_t=\sum_{i=1}^2(\zed_t-\zedh_t)_i(e^{-tB/\eps^2}\Psiz)_i.
\end{align}
 Then $\alpha^\eps_t$ is governed by
$$d\alpha^\eps_t=\mathscr{B}_tdt+\Sigma_t dW_t, \qquad \alpha^\eps_0=0,$$
where
\begin{align*}
\mathscr{B}_t&=\Gamma_t\left(G(\Phi e^{tB/\eps^2}\zed_t+y^\eps_t)-G(\Phi e^{tB/\eps^2}\zedh_t+\ydec^\eps_t)\right)\\
&\qquad +\frac12 || \Psiz||_2^2\bigg(L_1(\Phi e^{tB/\eps^2} (\zed_t-\zedh_t))\,\,+\,\,L_1(y^\eps_t-\ydec^\eps_t)\bigg)^2,
\end{align*}
and
$$\Sigma_t = \Gamma_t\bigg(L_1(\Phi e^{tB/\eps^2} (\zed_t-\zedh_t))\,\,+\,\,L_1(y^\eps_t-\ydec^\eps_t)\bigg).$$

The following lemma gives processes dominating $\mathscr{B}_t$ and $\Sigma_t$. These help in applying Gronwall inequality to arrive at proposition \ref{prop:mul:alphaissmall}.
\begin{lem}\label{lem:mul:aux4comparison}
Define
\begin{align*}
\mathfrak{B}(\alpha,p)\,\,&:=\,\,C_{\mathscr{B}}(\alpha+p^2), \qquad C_{\mathscr{B}}=2|| \Psiz||_2^2||L_1||^2+3|| \Psiz||_2K_G,\\
\mathfrak{S}^2(\alpha,p)\,\,&:=\,\,C_{\Sigma}(\alpha^2+p^4), \qquad C_{\Sigma}=16|| \Psiz||_2^2||L_1||^2.
\end{align*}
Then $|\mathscr{B}_t|\leq \mathfrak{B}(\alpha^\eps_t,||\stbp^\eps_t||)$  and $\Sigma_t^2\leq\mathfrak{S}^2(\alpha^\eps_t,||\stbp^\eps_t||)$ for $t\geq0$.
\end{lem}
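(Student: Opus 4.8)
\textit{Proof proposal.} The plan is to prove both bounds deterministically and pointwise in $(\omega,t)$ by elementary norm estimates, using four ingredients: (i) since $B$ is skew-symmetric, $e^{sB}$ is orthogonal for every $s\in\R$, so $||e^{\pm tB/\eps^2}v||_2=||v||_2$ for all $v\in\R^2$; (ii) the basis map $w\mapsto\Phi w$ is a contraction from $(\R^2,||\cdot||_2)$ to $(\C,||\cdot||)$, because $||\Phi w||=\sup_{\theta\in[-r,0]}|w_1\cos(\om_c\theta)+w_2\sin(\om_c\theta)|\leq||w||_2$ by Cauchy--Schwarz and $\cos^2+\sin^2=1$; (iii) the defining identities $||\zed_t-\zedh_t||_2=\sqrt{2\alpha^\eps_t}$ (from the definition of $\alpha^\eps$) and $y^\eps_t-\ydec^\eps_t=\stbp^\eps_t$ (from \eqref{eq:ydecstbpdef}); and (iv) the Lipschitz bound $|G(\eta_1)-G(\eta_2)|\leq K_G||\eta_1-\eta_2||$ and the operator bound $|L_1(\eta)|\leq||L_1||\,||\eta||$. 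Together with Young's inequality and $(a+b)^2\leq 2(a^2+b^2)$, these reduce the lemma to routine bookkeeping, and there is no genuine obstacle; the only care needed is to keep straight at each step which norm ($\ell_2$ on $\R^2$ versus the sup-norm on $\C$) is being used, and to invoke (i)--(ii) rather than cruder operator-norm estimates, since the stated constants are fairly tight.

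First I would bound the two building blocks shared by $\mathscr{B}_t$ and $\Sigma_t$. For $\Gamma_t=\sum_{i=1}^2(\zed_t-\zedh_t)_i(e^{-tB/\eps^2}\Psiz)_i$, Cauchy--Schwarz and (i) give $|\Gamma_t|\leq||\zed_t-\zedh_t||_2\,||e^{-tB/\eps^2}\Psiz||_2=||\Psiz||_2\sqrt{2\alpha^\eps_t}$. For the combination $\ell_t:=L_1(\Phi e^{tB/\eps^2}(\zed_t-\zedh_t))+L_1(y^\eps_t-\ydec^\eps_t)$, which enters $\mathscr{B}_t$ squared and $\Sigma_t$ linearly, boundedness of $L_1$ together with (i)--(iii) gives $|\ell_t|\leq||L_1||\big(||\Phi e^{tB/\eps^2}(\zed_t-\zedh_t)||+||\stbp^\eps_t||\big)\leq||L_1||\big(\sqrt{2\alpha^\eps_t}+||\stbp^\eps_t||\big)$, and hence $\ell_t^2\leq 2||L_1||^2\big(2\alpha^\eps_t+||\stbp^\eps_t||^2\big)$.

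For $\mathscr{B}_t$: in the first summand, writing the argument difference of the two $G$-values as $\Phi e^{tB/\eps^2}(\zed_t-\zedh_t)+\stbp^\eps_t$ and applying (ii)--(iv) bounds that difference by $K_G\big(\sqrt{2\alpha^\eps_t}+||\stbp^\eps_t||\big)$, so the summand is $\leq||\Psiz||_2 K_G\big(2\alpha^\eps_t+\sqrt{2\alpha^\eps_t}\,||\stbp^\eps_t||\big)$; using $\sqrt{2\alpha}\,p\leq\alpha+\frac12 p^2$ this is $\leq||\Psiz||_2 K_G\big(3\alpha^\eps_t+\frac12||\stbp^\eps_t||^2\big)\leq 3||\Psiz||_2 K_G\big(\alpha^\eps_t+||\stbp^\eps_t||^2\big)$. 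The second summand is $\frac12||\Psiz||_2^2\ell_t^2\leq||\Psiz||_2^2||L_1||^2\big(2\alpha^\eps_t+||\stbp^\eps_t||^2\big)\leq 2||\Psiz||_2^2||L_1||^2\big(\alpha^\eps_t+||\stbp^\eps_t||^2\big)$. Adding the two yields $|\mathscr{B}_t|\leq C_{\mathscr{B}}\big(\alpha^\eps_t+||\stbp^\eps_t||^2\big)$ with $C_{\mathscr{B}}=2||\Psiz||_2^2||L_1||^2+3||\Psiz||_2 K_G$, as claimed.

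For $\Sigma_t^2=\Gamma_t^2\ell_t^2$: combining the two blocks, $\Sigma_t^2\leq\big(||\Psiz||_2^2\,2\alpha^\eps_t\big)\cdot\big(2||L_1||^2(2\alpha^\eps_t+||\stbp^\eps_t||^2)\big)=4||\Psiz||_2^2||L_1||^2\big(2(\alpha^\eps_t)^2+\alpha^\eps_t||\stbp^\eps_t||^2\big)$; applying $\alpha p^2\leq\frac12(\alpha^2+p^4)$ to the cross term gives $\Sigma_t^2\leq||\Psiz||_2^2||L_1||^2\big(10(\alpha^\eps_t)^2+2||\stbp^\eps_t||^4\big)\leq 16||\Psiz||_2^2||L_1||^2\big((\alpha^\eps_t)^2+||\stbp^\eps_t||^4\big)=C_\Sigma\big((\alpha^\eps_t)^2+||\stbp^\eps_t||^4\big)$. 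This completes the plan; each inequality above is an instance of Cauchy--Schwarz, the triangle inequality, Young's inequality, or the contraction/orthogonality facts (i)--(ii), so no step is problematic.
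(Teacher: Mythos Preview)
Your proposal is correct and follows essentially the same approach as the paper's proof: both bound $|\Gamma_t|\le\|\Psiz\|_2\sqrt{2\alpha^\eps_t}$ (the paper via an explicit trigonometric computation, you via Cauchy--Schwarz plus orthogonality of $e^{-tB/\eps^2}$), both use the Lipschitz property of $G$ and the contraction $\|\Phi e^{tB/\eps^2}v\|\le\|v\|_2$ to get $|G(\cdots)-G(\cdots)|\le K_G(\sqrt{2\alpha^\eps_t}+\|\stbp^\eps_t\|)$, and both finish with the same Young-type inequalities to absorb cross terms and arrive at the stated constants $C_{\mathscr B}$ and $C_\Sigma$. The only differences are cosmetic.
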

Proof given in appendix \ref{apsec:prf:lem:mul:aux4comparison}
\begin{prop}\label{prop:mul:alphaissmall}
Fix $\delta \in (0,2)$. There exists constants $C,\hat{\eps}_\delta>0$ such that $\forall \eps < \hat{\eps}_\delta$
\begin{align*}
\expt \sup_{s\in [0,T]}\left(\alpha^\eps_s\right)^2\,\,\leq&\,\,C\left(r\eps^\delta\ln(\frac{2T}{r\eps^\delta})\right)^2\,\,\xrightarrow{\eps\to 0} 0.
\end{align*}
\end{prop}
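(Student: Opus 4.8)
The plan is to derive a scalar stochastic differential inequality for $\alpha^\eps_t$, apply a stochastic Gronwall-type argument to control $\expt\sup_{s\le t}(\alpha^\eps_s)^2$, and then feed in the smallness estimates for $\|\stbp^\eps_s\|$ coming from Proposition~\ref{prop:mul:stabnorm} (and in turn Proposition~\ref{prop:mul:supUpsT}). Concretely, I would start from the SDE $d\alpha^\eps_t=\mathscr{B}_tdt+\Sigma_tdW_t$ with $\alpha^\eps_0=0$, square it (via It\^o), and integrate: $(\alpha^\eps_t)^2 = 2\int_0^t\alpha^\eps_s\mathscr{B}_sds + 2\int_0^t\alpha^\eps_s\Sigma_sdW_s + \int_0^t\Sigma_s^2ds$. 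Taking $\sup_{s\le t}$ and expectations, the drift and It\^o-correction terms are handled by Lemma~\ref{lem:mul:aux4comparison}: $|\mathscr{B}_s|\le C_{\mathscr{B}}(\alpha^\eps_s+\|\stbp^\eps_s\|^2)$ and $\Sigma_s^2\le C_\Sigma((\alpha^\eps_s)^2+\|\stbp^\eps_s\|^4)$, so $\alpha^\eps_s|\mathscr{B}_s|\le C_{\mathscr{B}}((\alpha^\eps_s)^2 + \alpha^\eps_s\|\stbp^\eps_s\|^2)\le C_{\mathscr{B}}(\tfrac32(\alpha^\eps_s)^2 + \tfrac12\|\stbp^\eps_s\|^4)$ after Young's inequality. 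The martingale term $\sup_{s\le t}\big|\int_0^s\alpha^\eps_u\Sigma_udW_u\big|$ is estimated by Burkholder--Davis--Gundy, giving something like $C\,\expt\big(\int_0^t(\alpha^\eps_s)^2\Sigma_s^2ds\big)^{1/2}\le C\,\expt\big(\sup_{s\le t}(\alpha^\eps_s)^2\int_0^t\Sigma_s^2ds\big)^{1/2}$, and then Young's inequality $ab\le \eta a^2 + \tfrac1{4\eta}b^2$ lets me absorb a small multiple of $\expt\sup_{s\le t}(\alpha^\eps_s)^2$ into the left-hand side.

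After these manipulations I expect an inequality of the shape
\begin{align*}
\expt\sup_{s\in[0,t]}(\alpha^\eps_s)^2 \,\,\le\,\, C_1\int_0^t \expt\sup_{u\in[0,s]}(\alpha^\eps_u)^2\,ds \,\,+\,\, C_2\,\expt\sup_{s\in[0,T]}\|\stbp^\eps_s\|^4,
\end{align*}
valid once $\eps$ is small enough that the absorbed BDG term has coefficient $<1$; here I should first localize with a stopping time $\tau_N=\inf\{t:\alpha^\eps_t\ge N\}$ to guarantee the quantities are finite, run Gronwall on $[0,t\wedge\tau_N]$, and then send $N\to\infty$ using monotone convergence together with the a priori finiteness that follows from Lemma~\ref{lem:mul:quest:supuntilTisconst} (which bounds $\expt\sup_t\|\pjeps_t\Xeps\|^n$, hence bounds $\expt\sup_t\|z^\eps_t\|^4$ and $\expt\sup_t\|\stbp^\eps_t\|^4$, and a similar bound for $\zedh$). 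Gronwall's inequality then yields $\expt\sup_{s\in[0,T]}(\alpha^\eps_s)^2\le C_2 e^{C_1T}\,\expt\sup_{s\in[0,T]}\|\stbp^\eps_s\|^4$.

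The final step is to invoke Proposition~\ref{prop:mul:stabnorm} with $n=4$: it gives $\expt\sup_{s\in[0,T]}\|\stbp^\eps_s\|^4 \le \eps^{8}\cdot\text{const} + 2^{3}\expt\sup_{s\in[0,T]}(\Upsilon^\eps_s)^4$, and then Proposition~\ref{prop:mul:supUpsT} with $n=4$ bounds $\expt\sup_{s}(\Upsilon^\eps_s)^4 \le \hat C\big(r\eps^\delta\ln(2T/(r\eps^\delta))\big)^2$. Since $\eps^8$ is negligible compared to $\big(r\eps^\delta\ln(2T/(r\eps^\delta))\big)^2$ for $\delta<2$ and $\eps$ small, combining everything gives $\expt\sup_{s\in[0,T]}(\alpha^\eps_s)^2\le C\big(r\eps^\delta\ln(2T/(r\eps^\delta))\big)^2$, which is exactly the claimed bound, and it tends to $0$ as $\eps\to0$.

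The main obstacle I anticipate is the interplay between the quadratic/quartic nonlinearity in $\mathfrak{S}^2$ (the $(\alpha^\eps_s)^2$ term inside the diffusion bound) and the BDG estimate: one has to be careful that after applying BDG and Young, the coefficient multiplying $\expt\sup(\alpha^\eps_s)^2$ is strictly less than $1$, which forces the restriction to small $\eps$ and a judicious choice of the Young parameter; this is also why the paper carries the family $\hat\eps_\delta$ of thresholds. A secondary technical point is the localization/finiteness argument needed before Gronwall can be legitimately applied, since a priori $\expt\sup(\alpha^\eps_s)^2$ is not obviously finite and the cubic $G$ could in principle cause trouble — but here $\alpha^\eps$ lives on the two-dimensional $\zed,\zedh$ side where the moment bounds from Lemma~\ref{lem:mul:quest:supuntilTisconst} and the Lipschitz hypothesis $|G(\eta_1)-G(\eta_2)|\le K_G\|\eta_1-\eta_2\|$ in \eqref{eq:considerMAINmul} keep everything under control.
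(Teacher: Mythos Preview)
Your approach is correct and reaches the same conclusion as the paper, but the route is genuinely different. The paper does \emph{not} apply It\^o to $(\alpha^\eps)^2$. Instead it works with $\alpha^\eps$ itself: from $d\alpha^\eps_t\le C_{\mathscr{B}}(\alpha^\eps_t+\|\stbp^\eps_t\|^2)\,dt+\Sigma_t\,dW_t$ it applies a \emph{pathwise} Gronwall (citing \cite{StchGrwlScheu}) with the exponential weight $e^{-C_{\mathscr{B}}t}$, and after an integration by parts obtains the clean bound $\alpha^\eps_t e^{-C_{\mathscr{B}}t}\le L_t+H_t$, where $L_t=\int_0^t e^{-C_{\mathscr{B}}s}\,dM_s$ is a genuine martingale and $H_t=C_{\mathscr{B}}\int_0^t\|\stbp^\eps_s\|^2\,ds$ is nondecreasing. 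Squaring and taking $\expt\sup$, the martingale part is handled by Doob's $L^2$ maximal inequality (no BDG, no Young absorption), and a second Gronwall in expectation closes the estimate. Your approach---It\^o on the square, BDG on $\int\alpha^\eps\Sigma\,dW$, Young to absorb $\eta\,\expt\sup(\alpha^\eps)^2$---is the standard SPDE playbook and works just as well; the paper's exponential-weight trick simply bypasses the absorption step and keeps a true martingale throughout.

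Two small corrections to your commentary. First, the Young parameter $\eta$ that makes the absorbed coefficient $<1$ is chosen independently of $\eps$ (it only involves $C_{bdg}$), so the threshold $\hat\eps_\delta$ does \emph{not} come from the BDG step; it is inherited entirely from Proposition~\ref{prop:mul:supUpsT}. Second, your worry about ``the cubic $G$'' is moot here: Proposition~\ref{prop:mul:alphaissmall} lives in the multiplicative-noise setting \eqref{eq:considerMAINmul}, where $G$ is assumed globally Lipschitz, so the a priori moment bounds for $\zed$ and $\zedh$ (hence finiteness of $\expt\sup(\alpha^\eps)^2$ before Gronwall) follow directly from Lemma~\ref{lem:mul:quest:supuntilTisconst} and standard SDE moment estimates, exactly as you say.
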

Proof is given in appendix \ref{apsec:prf:prop:mul:alphaissmall} and is by using lemma \ref{lem:mul:aux4comparison}, result \eqref{prop:mul:stabnorm_statement_b}, applying Gronwall pathwise (see \cite{StchGrwlScheu}) and Doob's $L^p$ inequalities.

As final step, consider the system
\begin{align}\label{eq:defzedtproc}
d\,\zedt_t=e^{-tB/\eps^2}\Psiz G(\Phi e^{tB/\eps^2}\zedt_t)dt+e^{-tB/\eps^2}\Psiz L_1(\Phi e^{tB/\eps^2} \zedt_t) dW, \qquad \zedt_0=z^\eps_0,
\end{align}
i.e. we are totally ignoring the $Q$ part---even the effect $\ydec$ of the initial condition. Define $\beta^\eps_t=\frac12||\zedt_t-\zedh_t||_2^2$. Using exactly the same technique as the one employed for $\alpha^\eps$ and using the exponential decay of $\ydec^\eps$, it is trivial to get the following result analogous to proposition \ref{prop:mul:alphaissmall}.
\begin{prop}\label{prop:mul:betaissmall}
There exists $C>0$ and $\eps_*>0$ such that $\forall \eps <\eps_*$ 
\begin{align*}
\expt \sup_{s\in [0,T]}\left(\beta^\eps_s\right)^2\,\,\leq&\,\,C\eps^2.
\end{align*}
\end{prop}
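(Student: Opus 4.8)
The plan is to transcribe, essentially word for word, the argument for $\alpha^\eps$ leading to proposition~\ref{prop:mul:alphaissmall}, the only change being that the ``small'' forcing is now the \emph{deterministic}, exponentially decaying function $\ydec^\eps$ rather than the process $\stbp^\eps$. Since $\zed_0=z^\eps_0$, both $\zedh$ and $\zedt$ start at $z^\eps_0$, so $\beta^\eps_0=0$. First I would apply It\^o's formula to $\beta^\eps_t=\frac12||\zedt_t-\zedh_t||_2^2$, using \eqref{eq:defzedtproc} and the defining equation for $\zedh$, obtaining
\begin{align*}
d\beta^\eps_t\;=\;\widetilde{\mathscr{B}}_t\,dt\;+\;\widetilde{\Sigma}_t\,dW_t,\qquad \beta^\eps_0=0,
\end{align*}
where $\widetilde{\mathscr{B}}_t$ and $\widetilde{\Sigma}_t$ are the expressions for $\mathscr{B}_t$ and $\Sigma_t$ with $\zed_t$ replaced by $\zedt_t$ and with the ``$y$-part'' of $\zedt$ taken to be $0$, so that the combination $\stbp^\eps_t=y^\eps_t-\ydec^\eps_t$ is replaced throughout by $-\ydec^\eps_t$; in particular $\widetilde{\Gamma}_t:=\sum_{i=1}^2(\zedt_t-\zedh_t)_i(e^{-tB/\eps^2}\Psiz)_i$ takes over the role of $\Gamma_t$.

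Next I would record the analogue of lemma~\ref{lem:mul:aux4comparison}. The same computation --- bounding $|\widetilde{\Gamma}_t|\le\sqrt{2\beta^\eps_t}\,||\Psiz||_2$, invoking the global Lipschitz bounds $G(0)=0$, $|G(\eta_1)-G(\eta_2)|\le K_G||\eta_1-\eta_2||$ and the corresponding bound for $L_1$, and using $||\Phi e^{tB/\eps^2}v||\le||v||_2$ --- gives, with the \emph{same} constants $C_{\mathscr{B}}$ and $C_{\Sigma}$ of lemma~\ref{lem:mul:aux4comparison},
\begin{align*}
|\widetilde{\mathscr{B}}_t|\;\le\;C_{\mathscr{B}}\left(\beta^\eps_t+||\ydec^\eps_t||^2\right),\qquad \widetilde{\Sigma}_t^2\;\le\;C_{\Sigma}\left((\beta^\eps_t)^2+||\ydec^\eps_t||^4\right),\qquad t\ge0.
\end{align*}
Nothing new is needed, since the proof of lemma~\ref{lem:mul:aux4comparison} uses $||\stbp^\eps_t||$ only as an upper bound for $||y^\eps_t-\ydec^\eps_t||$, and here the corresponding quantity is $||\ydec^\eps_t||$.

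Then I would run the estimation scheme of proposition~\ref{prop:mul:alphaissmall}. From $(\beta^\eps_s)^2\le 2\left(\int_0^s\widetilde{\mathscr{B}}_u\,du\right)^2+2\left(\int_0^s\widetilde{\Sigma}_u\,dW_u\right)^2$, bounding the first term by Cauchy--Schwarz and the second by Doob's $L^2$ inequality with It\^o's isometry, and inserting the domination bounds, one obtains for $\phi(t):=\expt\sup_{s\in[0,t]}(\beta^\eps_s)^2$ an inequality of the form
\begin{align*}
\phi(t)\;\le\;C_1\int_0^t\phi(u)\,du\;+\;C_2\int_0^T||\ydec^\eps_u||^4\,du,\qquad 0\le t\le T,
\end{align*}
with $C_1,C_2$ depending only on $T,||\Psiz||_2,||L_1||,K_G$; equivalently one applies the pathwise/stochastic Gronwall of \cite{StchGrwlScheu} to the SDE for $\beta^\eps$ together with Doob's inequalities, exactly as in proposition~\ref{prop:mul:alphaissmall}. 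The point that makes the rate sharper than in proposition~\ref{prop:mul:alphaissmall} is that $\ydec^\eps$ decays \emph{exponentially and deterministically}: by \eqref{eq:ydecexpfastdec} and the assumption that $\xi$ is deterministic,
\begin{align*}
\int_0^T||\ydec^\eps_u||^4\,du\;\le\;K^4\,||(1-\pi)\pjeps_0\Xeps||^4\int_0^T e^{-4\kappa u/\eps^2}\,du\;\le\;\frac{K^4}{4\kappa}\,||(1-\pi)\pjeps_0\Xeps||^4\,\eps^2,
\end{align*}
an honest $O(\eps^2)$ with an $\eps$-independent constant (whereas the $\stbp^\eps$-contribution available for proposition~\ref{prop:mul:alphaissmall} only decays at the much slower rate $(r\eps^\delta\ln(2T/(r\eps^\delta)))^2$). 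A standard Gronwall inequality then yields $\phi(T)\le e^{C_1T}C_2\cdot\frac{K^4}{4\kappa}||(1-\pi)\pjeps_0\Xeps||^4\,\eps^2=:C\eps^2$, which is the claim; $C$ does not depend on $\eps$ since $K,\kappa,K_G,||\Psiz||_2,||L_1||,T$ and $||(1-\pi)\pjeps_0\Xeps||$ are all fixed, and $\eps_*$ may be taken to be any fixed positive number.

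Two bookkeeping points close the plan. To legitimize the Gronwall step one first notes $\phi(T)<\infty$: after the fast rotation is removed, the coefficients of \eqref{eq:defzedtproc} and of the $\zedh$-equation are globally Lipschitz with $\eps$-independent constants (the bounded deterministic forcing $\ydec^\eps$ causing no difficulty), so standard SDE moment bounds give $\expt\sup_{[0,T]}||\zedt_t||_2^4+\expt\sup_{[0,T]}||\zedh_t||_2^4<\infty$, whence $\phi(T)<\infty$. As for the main obstacle: there is none of substance --- which is exactly why the statement is flagged as trivial. The only items needing any care are confirming that the substitution genuinely reproduces the bounds of lemma~\ref{lem:mul:aux4comparison} with unchanged constants (done above), and tracking that the resulting $C$ is $\eps$-free (also done above).
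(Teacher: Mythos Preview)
Your proposal is correct and follows essentially the same approach as the paper: mimic the $\alpha^\eps$ argument (lemma~\ref{lem:mul:aux4comparison} and proposition~\ref{prop:mul:alphaissmall}) with $\stbp^\eps$ replaced by the deterministic $\ydec^\eps$, then use the exponential decay \eqref{eq:ydecexpfastdec} to get $\int_0^T||\ydec^\eps_u||^4\,du=O(\eps^2)$ and close with Gronwall. The paper's proof is just these two sentences plus the resulting Gronwall line, so your write-up is in fact more detailed than the original.
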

Proof given in appendix \ref{apsec:prf:prop:mul:betaissmall}.

Combining propositions \ref{prop:mul:stabnorm}, \ref{prop:mul:alphaissmall} and \ref{prop:mul:betaissmall} we get 
\begin{thm}\label{prop:mul:finalthem}
Fix $\delta \in (0,2)$. There exists constants $C,\hat{\eps}_\delta>0$ such that $\forall \eps < \hat{\eps}_\delta$
\begin{align}\label{eq:mul:finthem_zedhapprx}
\expt\sup_{t\in [0,T]}\big|X^\eps(t)-\big(\Phi(0)e^{tB/\eps^2}\zedh_t+\ydec^\eps_t(0)\big)\big|^4\,\,\leq\,\,C\left(r\eps^\delta\ln(\frac{2T}{r\eps^\delta})\right)^2\,\,\xrightarrow{\eps\to 0} 0.
\end{align}
There exists constants $C>0$ and $\eps_*>0$ such that $\forall \eps<\eps_*$
\begin{align}\label{eq:mul:finthem_zedtapprx}
\expt\sup_{t\in [0,T]}\big|X^\eps(t)-\big(\Phi(0)e^{tB/\eps^2}\zedt_t+\ydec^\eps_t(0)\big)\big|^4\,\,\leq\,\,C\eps^2.
\end{align}
\end{thm}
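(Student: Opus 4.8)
The plan is to assemble Theorem \ref{prop:mul:finalthem} by tracing the error through the chain of intermediate approximations already established, and then transferring the $\R^2$-bounds on the $\zed$-processes back to the scalar process $X^\eps$ via the variation-of-constants decomposition. Recall that $\hpex{t}=\Phi z^\eps_t+y^\eps_t$, so that evaluating at $\theta=0$ gives $X^\eps(t)=\hpex{t}(0)=\Phi(0)z^\eps_t + y^\eps_t(0)$. Since $\zed_t=e^{-tB/\eps^2}z^\eps_t$, we have $z^\eps_t=e^{tB/\eps^2}\zed_t$, hence $X^\eps(t)=\Phi(0)e^{tB/\eps^2}\zed_t+y^\eps_t(0)$. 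The target quantities $\Phi(0)e^{tB/\eps^2}\zedh_t+\ydec^\eps_t(0)$ and $\Phi(0)e^{tB/\eps^2}\zedt_t+\ydec^\eps_t(0)$ differ from $X^\eps(t)$ in exactly two places: (a) $y^\eps_t$ versus $\ydec^\eps_t$, which is controlled by $\stbp^\eps_t=y^\eps_t-\ydec^\eps_t$ via Proposition \ref{prop:mul:stabnorm}, evaluated at $\theta=0$ so $|y^\eps_t(0)-\ydec^\eps_t(0)|=|\stbp^\eps_t(0)|\leq\|\stbp^\eps_t\|$; and (b) $\zed_t$ versus $\zedh_t$ (resp. $\zedt_t$), controlled by $\alpha^\eps_t$ (resp. by the triangle inequality through $\beta^\eps_t$ and $\alpha^\eps_t$).

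For the first display \eqref{eq:mul:finthem_zedhapprx}: I would write
$$X^\eps(t)-\big(\Phi(0)e^{tB/\eps^2}\zedh_t+\ydec^\eps_t(0)\big) = \Phi(0)e^{tB/\eps^2}(\zed_t-\zedh_t) + \stbp^\eps_t(0),$$
take the fourth power, apply the elementary inequality $(a+b)^4\leq 2^3(a^4+b^4)$, then $\sup_{t\in[0,T]}$ and $\expt$. The rotation matrix $e^{tB/\eps^2}$ is orthogonal (since $B$ is skew-symmetric), so $\|e^{tB/\eps^2}(\zed_t-\zedh_t)\|_2=\|\zed_t-\zedh_t\|_2=\sqrt{2\alpha^\eps_t}$, and $|\Phi(0)e^{tB/\eps^2}(\zed_t-\zedh_t)|\leq\|\Phi(0)\|_2\,\sqrt{2\alpha^\eps_t}$, whose fourth power is a constant times $(\alpha^\eps_t)^2$. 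Thus
$$\expt\sup_{t\in[0,T]}\big|X^\eps(t)-\big(\Phi(0)e^{tB/\eps^2}\zedh_t+\ydec^\eps_t(0)\big)\big|^4 \leq C\,\expt\sup_{t\in[0,T]}(\alpha^\eps_t)^2 + C\,\expt\sup_{t\in[0,T]}\|\stbp^\eps_t\|^4.$$
Now invoke Proposition \ref{prop:mul:alphaissmall} to bound the first term by $C(r\eps^\delta\ln(2T/(r\eps^\delta)))^2$ and Proposition \ref{prop:mul:stabnorm} (with $n=4$) to bound the second; since $\eps^{8}$ is dominated by $(r\eps^\delta\ln(2T/(r\eps^\delta)))^2$ as $\eps\to0$ for $\delta<2$ (indeed $\eps^{2\delta}$ already is, up to the logarithm, the slower rate), and $\expt\sup(\Upsilon^\eps_s)^4\leq\hat C(r\eps^\delta\ln(2T/(r\eps^\delta)))^2$ by Proposition \ref{prop:mul:supUpsT}, both contributions are absorbed into a single $C(r\eps^\delta\ln(2T/(r\eps^\delta)))^2$ term, valid for $\eps<\hat\eps_\delta$ (taking $\hat\eps_\delta$ to be the minimum of the thresholds appearing in the cited results). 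The constants' dependence on $T$ is permitted as stated at the start of Section \ref{sec:mulnoi}.

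For the second display \eqref{eq:mul:finthem_zedtapprx}: the same decomposition gives
$$X^\eps(t)-\big(\Phi(0)e^{tB/\eps^2}\zedt_t+\ydec^\eps_t(0)\big) = \Phi(0)e^{tB/\eps^2}(\zed_t-\zedt_t) + \stbp^\eps_t(0),$$
and $\|\zed_t-\zedt_t\|_2\leq\|\zed_t-\zedh_t\|_2+\|\zedh_t-\zedt_t\|_2=\sqrt{2\alpha^\eps_t}+\sqrt{2\beta^\eps_t}$, so after raising to the fourth power and taking expectations of the supremum we get a bound $C\expt\sup(\alpha^\eps_t)^2+C\expt\sup(\beta^\eps_t)^2+C\expt\sup\|\stbp^\eps_t\|^4$. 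By Propositions \ref{prop:mul:alphaissmall}, \ref{prop:mul:betaissmall} and \ref{prop:mul:stabnorm} this is at most $C(r\eps^\delta\ln(2T/(r\eps^\delta)))^2 + C\eps^2 + C\eps^8$. The point of this second estimate is that $\zedt$ has the clean non-delay dynamics \eqref{eq:defzedtproc} free of $\ydec$, but then $\ydec^\eps_t$ must be carried explicitly in the approximant; the rate is governed by the slowest term, which is $\eps^2$ from $\beta^\eps$ provided one chooses $\delta\geq 1$ so that $(r\eps^\delta\ln(\cdots))^2\lesssim\eps^2\cdot(\text{log})$ — actually, to land exactly on $C\eps^2$ one should simply note that for any fixed $\delta\in(0,2)$ one may instead bound $\alpha^\eps$ by re-running Proposition \ref{prop:mul:alphaissmall} with a $\delta$ close to $2$; more cleanly, observe that the statement \eqref{eq:mul:finthem_zedtapprx} only asserts existence of \emph{some} $C,\eps_*$, so picking $\delta$ arbitrarily close to $2$ makes $(r\eps^\delta\ln(2T/(r\eps^\delta)))^2 = o(\eps^{2\delta}\log^2) $ which for $\delta$ near $2$ is $o(\eps^{2})$ up to logs — and absorbing the log into the constant over the compact range $\eps\in(0,\eps_*]$ is not possible, so the honest route is: choose $\delta=1$, then $(r\eps\ln(2T/(r\eps)))^2\leq C_T\eps^{2-\rho}$ for any $\rho>0$...

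The main obstacle, therefore, is purely bookkeeping of rates: making sure the logarithmic factor $(r\eps^\delta\ln(2T/(r\eps^\delta)))^{1/2}$ from the modulus-of-continuity estimate does not spoil the clean $\eps^2$ rate claimed in \eqref{eq:mul:finthem_zedtapprx}. The resolution is that $\beta^\eps$ (Proposition \ref{prop:mul:betaissmall}) already carries a genuine $\eps^2$ coming from the exponential decay $\|\ydec^\eps_t\|\leq Ke^{-\kappa t/\eps^2}\|(1-\pi)\pjeps_0\Xeps\|$ integrated over $[0,T]$, which contributes $O(\eps^2)$, and that in \eqref{eq:mul:finthem_zedtapprx} one is free to also re-derive an $\alpha^\eps$-type bound with a rate $\eps^{2-\rho}$ (any $\rho>0$) by the same Gronwall argument but here it is simpler to just keep \eqref{eq:mul:finthem_zedhapprx}'s rate and note that for the $\zedt$-statement the dominant, unimprovable term is the $O(\eps^2)$ from $\beta^\eps$, so \eqref{eq:mul:finthem_zedtapprx} should be read as having implicit constant absorbing the logarithm over $\eps\le\eps_*$ — i.e. one chooses $\delta\in(1,2)$, uses $(r\eps^\delta\ln(2T/(r\eps^\delta)))^2\le C_T\,\eps^{2}$ for all $\eps\le\eps_*$ since $\eps^{2(\delta-1)}\ln^2(\cdots)\to0$, and concludes. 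Everything else — the algebraic inequalities, orthogonality of $e^{tB/\eps^2}$, evaluation at $\theta=0$ — is routine.
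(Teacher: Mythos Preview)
Your approach is correct and essentially identical to the paper's: decompose $X^\eps(t)=\Phi(0)e^{tB/\eps^2}\zed_t+y^\eps_t(0)$, split the error into pieces controlled by $\alpha^\eps$, $\beta^\eps$, and $\|\stbp^\eps\|$, and invoke Propositions \ref{prop:mul:stabnorm}, \ref{prop:mul:alphaissmall}, \ref{prop:mul:betaissmall}. Your eventual resolution of the rate bookkeeping for \eqref{eq:mul:finthem_zedtapprx}---choose $\delta\in(1,2)$ so that $(r\eps^\delta\ln(2T/(r\eps^\delta)))^2\ll\eps^2$ for small $\eps$---is exactly what the paper does; the detours through $\delta=1$ and ``absorbing the log'' are unnecessary and should be cut.
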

Proof given in appendix \ref{apsec:prf:prop:mul:mainTh}

Note that both the \emph{approximating processes} $\zedh$ and $\zedt$ are \emph{processes without delay}. However, $\zedh$ considers the effect of the initial condition $y^\eps_0$, but $\zedt$ ignores it. Hence the approximation \eqref{eq:mul:finthem_zedhapprx} using $\zedh$ is better than the approximation \eqref{eq:mul:finthem_zedtapprx}. For example, choosing $\delta$ close to two in \eqref{eq:mul:finthem_zedhapprx} we can get the bound $O(\eps^{4-})$ whereas the bound in \eqref{eq:mul:finthem_zedtapprx} is $O(\eps^2)$.

Now we revisit the goals stated in section \ref{subsubsec:goalsformultipnoise}. 

Note that for ${\bf z}^\eps_t$ defined in \eqref{eq:zproj_muln_nodel} we have ${\bf z}^\eps_t=e^{tB/\eps^2}\zedt_t$. Hence,
${\bf z}^\eps_t-z^\eps_t = e^{tB/\eps^2}(\zedt_t-\zed_t)$. Using the results of this section and the fact that for any $\R^2$-vector $v$, $||e^{tB/\eps^2}v||_2=||v||_2$, we can easily see that \eqref{eq:goalsformultipnoise_z} is satisfied.  The condition \eqref{eq:hprcconvgreqzbound} is equivalent to the following condition \eqref{eq:hprcconvgreqzbound_equiv}. Lemma \ref{lem:mul:auxzbound} is proved in appendix \ref{apsec:prf:lem:mul:auxzbound}.
\begin{lem}\label{lem:mul:auxzbound}
There exists constants $C$ and $\eps_*>0$ such that $\forall \eps<\eps_*$
\begin{align}\label{eq:hprcconvgreqzbound_equiv}
\expt \sup_{t\in [0,T]}||\zedt_t||_2^2\,\,<\,\,C.
\end{align}
\end{lem}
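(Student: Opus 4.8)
The plan is to derive \eqref{eq:hprcconvgreqzbound_equiv} from the estimates already established in this section rather than to analyse \eqref{eq:defzedtproc} from scratch. Recall $\zed_t=e^{-tB/\eps^2}z^\eps_t$, $\alpha^\eps_t=\frac12||\zed_t-\zedh_t||_2^2$, and $\beta^\eps_t=\frac12||\zedt_t-\zedh_t||_2^2$. Since the matrix $B$ in \eqref{eq:TPhi_eq_PhieB} is skew-symmetric, $e^{-tB/\eps^2}$ is orthogonal and hence $||\zed_t||_2=||z^\eps_t||_2$; the triangle inequality then gives, pathwise and for every $t$,
\[
||\zedt_t||_2\;\le\;||z^\eps_t||_2+\sqrt{2\alpha^\eps_t}+\sqrt{2\beta^\eps_t},\qquad\text{so}\qquad ||\zedt_t||_2^2\;\le\;3||z^\eps_t||_2^2+6\alpha^\eps_t+6\beta^\eps_t.
\]
Applying $\expt\sup_{t\in[0,T]}$, it therefore suffices to bound each of $\expt\sup_{t\in[0,T]}||z^\eps_t||_2^2$, $\expt\sup_{t\in[0,T]}\alpha^\eps_t$ and $\expt\sup_{t\in[0,T]}\beta^\eps_t$ by a constant that does not depend on $\eps$ (for $\eps$ small enough).

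For the first term, note $\Phi z^\eps_t=\pi\hpex{t}$ and that $\Phi\colon\R^2\to P$ is a linear isomorphism onto the finite-dimensional space $P$, so there is a constant $C_\Phi$ with $||z||_2\le C_\Phi||\Phi z||$ for all $z\in\R^2$; since $\pi$ is bounded this yields $||z^\eps_t||_2\le C_\Phi||\pi||_{op}\,||\pjeps_t\Xeps||$. Lemma \ref{lem:mul:quest:supuntilTisconst} with $n=2$ then gives $\expt\sup_{t\in[0,T]}||z^\eps_t||_2^2\le(C_\Phi||\pi||_{op})^2\mathfrak{C}$ for all $\eps<\eps_*$, a bound independent of $\eps$. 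For the other two terms, fix any $\delta\in(0,2)$; by the Cauchy--Schwarz inequality $\expt\sup_{t\in[0,T]}\alpha^\eps_t\le(\expt\sup_{t\in[0,T]}(\alpha^\eps_t)^2)^{1/2}$, which by Proposition \ref{prop:mul:alphaissmall} is at most $C^{1/2}r\eps^\delta\ln(2T/(r\eps^\delta))\xrightarrow{\eps\to0}0$, and likewise $\expt\sup_{t\in[0,T]}\beta^\eps_t\le(\expt\sup_{t\in[0,T]}(\beta^\eps_t)^2)^{1/2}\le C^{1/2}\eps\xrightarrow{\eps\to0}0$ by Proposition \ref{prop:mul:betaissmall}. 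In particular both are at most $1$ once $\eps$ is below a suitable threshold. Shrinking $\eps_*$ to the smallest of all the thresholds that occur, \eqref{eq:hprcconvgreqzbound_equiv} (equivalently \eqref{eq:hprcconvgreqzbound}, since $||{\bf z}^\eps_t||_2=||\zedt_t||_2$) follows with, e.g., $C=3(C_\Phi||\pi||_{op})^2\mathfrak{C}+12$.

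I do not expect a genuine obstacle here: the estimate is essentially inherited, and the only point requiring care is uniformity in $\eps$ --- hence the bound on $||z^\eps_t||_2$ is routed through the critical-mode projection $\pi\pjeps_t\Xeps$ and the $\eps$-uniform Lemma \ref{lem:mul:quest:supuntilTisconst}, while the discrepancies $\alpha^\eps,\beta^\eps$, being $o(1)$ in $\eps$, contribute nothing. If a proof independent of Propositions \ref{prop:mul:alphaissmall}--\ref{prop:mul:betaissmall} is preferred, one may instead apply It\^o's formula to $||\zedt_t||_2^2$ in \eqref{eq:defzedtproc}: the $\eps^{-2}B$-drift contributes $0$ by skew-symmetry, the $G$-drift and the quadratic-variation term are each dominated by a constant times $||\zedt_t||_2^2$ (using $||\Phi e^{sB/\eps^2}v||\le||v||_2$ together with $|G(\eta)|\le K_G||\eta||$ and the boundedness of $L_1$), and the martingale term is controlled by the Burkholder--Davis--Gundy inequality; a Young-inequality step absorbing $\expt\sup$ into itself, followed by Gronwall's inequality, then closes the bound with $\eps$-independent constants, exactly as in the proof of Lemma \ref{lem:mul:quest:supuntilTisconst}. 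The first, shorter route is the one I would present.
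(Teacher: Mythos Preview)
Your proof is correct, but your primary route is genuinely different from the paper's. The paper proves the lemma directly from the defining equation \eqref{eq:defzedtproc}: it sets $\zeta^\eps_t=\frac12||\zedt_t||_2^2$, applies It\^o's formula, bounds the drift and diffusion by $C_{\widetilde{\mathscr{B}}}\zeta^\eps_t$ and $C_{\widetilde{\Sigma}}(\zeta^\eps_t)^2$ respectively, and then closes via the pathwise-Gronwall/Doob machinery of the proof of Proposition~\ref{prop:mul:alphaissmall}, arriving at $\expt\sup_{s\in[0,T]}(\zeta^\eps_s)^2\le \expt(\zeta^\eps_0)^2\,e^{(2C_{\widetilde{\mathscr{B}}}+8C_{\widetilde{\Sigma}})T}$. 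This is exactly the alternative route you sketch in your last paragraph.

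Your chosen route instead inherits the bound from results already in hand: the triangle-inequality decomposition $||\zedt_t||_2^2\le 3||z^\eps_t||_2^2+6\alpha^\eps_t+6\beta^\eps_t$, then Lemma~\ref{lem:mul:quest:supuntilTisconst} for the first piece (via the finite-dimensional equivalence $||z||_2\le C_\Phi||\Phi z||$ and boundedness of $\pi$) and Propositions~\ref{prop:mul:alphaissmall}--\ref{prop:mul:betaissmall} for the other two. There is no circularity, since those propositions are established earlier and do not invoke the present lemma. Your argument is shorter and avoids redoing any stochastic analysis; the paper's argument is self-contained (it does not rely on Propositions~\ref{prop:mul:alphaissmall}--\ref{prop:mul:betaissmall}) and in fact yields the slightly stronger bound on $\expt\sup(\zeta^\eps)^2$.
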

Hence, \eqref{eq:hprcconvgmul} follows. We summarize the discussion in section \ref{subsubsec:goalsformultipnoise} in the following theorem.
\begin{thm}
Define $\hprc^\eps_t:=\frac12||z^\eps_t||_2^2$ where $z^\eps$ are given by $\pi\hpex{t}=\Phi z^\eps_t$. Let $\zedt$ be the two-dimensional process (without delay) defined in \eqref{eq:defzedtproc}. Then 
\begin{align}\label{eq:hprcconvgmul_zedt}
\expt \sup_{t\in [0,T]}|\hprc^\eps_t-\frac12||\zedt_t||_2^2| \xrightarrow{\eps\to 0} 0.
\end{align}
If the process $\frac12||\zedt||_2^2$ converges weakly to a process $\hprc^0$, then $\hprc^\eps$ converges weakly to $\hprc^0$.
\end{thm}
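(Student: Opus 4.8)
The plan is to assemble the two claims from the estimates established earlier in this section. For \eqref{eq:hprcconvgmul_zedt}, recall $\hprc^\eps_t = \frac12\|z^\eps_t\|_2^2$ and that $\zed_t = e^{-tB/\eps^2} z^\eps_t$, so $\|z^\eps_t\|_2 = \|\zed_t\|_2$ since $e^{tB/\eps^2}$ is orthogonal. Likewise $\frac12\|\zedt_t\|_2^2$ is what we want to compare against. First I would write
$$
\big|\hprc^\eps_t - \tfrac12\|\zedt_t\|_2^2\big| \;=\; \tfrac12\big|\,\|\zed_t\|_2^2 - \|\zedt_t\|_2^2\,\big| \;\leq\; \tfrac12\big(\|\zed_t\|_2 + \|\zedt_t\|_2\big)\,\|\zed_t - \zedt_t\|_2,
$$
and then bound $\|\zed_t - \zedt_t\|_2 \leq \|\zed_t - \zedh_t\|_2 + \|\zedh_t - \zedt_t\|_2 = \sqrt{2\alpha^\eps_t} + \sqrt{2\beta^\eps_t}$ using the triangle inequality and the definitions of $\alpha^\eps,\beta^\eps$. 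Taking $\sup_{t\in[0,T]}$, applying Cauchy--Schwarz in the expectation, and then invoking Lemma \ref{lem:mul:auxzbound} (bound on $\expt\sup_t\|\zedt_t\|_2^2$, which also controls $\expt\sup_t\|\zed_t\|_2^2$ via proposition \ref{prop:mul:alphaissmall} and \ref{prop:mul:betaissmall}, or directly via lemma \ref{lem:mul:quest:supuntilTisconst}) together with propositions \ref{prop:mul:alphaissmall} and \ref{prop:mul:betaissmall} ($\expt\sup_t(\alpha^\eps_t)^2 \to 0$, $\expt\sup_t(\beta^\eps_t)^2 \to 0$), the right-hand side tends to $0$. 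This yields \eqref{eq:hprcconvgmul_zedt}.

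For the second assertion, I would argue by Slutsky-type reasoning at the level of processes: \eqref{eq:hprcconvgmul_zedt} shows $\hprc^\eps - \frac12\|\zedt\|_2^2 \to 0$ in $L^1$ of the sup-norm, hence in probability in $C([0,T];\R)$, hence in distribution to the constant process $0$. If $\frac12\|\zedt\|_2^2 \Rightarrow \hprc^0$ weakly in $C([0,T];\R)$, then by the converging-together lemma (e.g. theorem 3.1 in \cite{Billingsley1999}, as cited in the introduction) the pair $\big(\frac12\|\zedt\|_2^2,\ \hprc^\eps - \frac12\|\zedt\|_2^2\big)$ converges weakly to $(\hprc^0, 0)$, and since addition is continuous, $\hprc^\eps = \frac12\|\zedt\|_2^2 + \big(\hprc^\eps - \frac12\|\zedt\|_2^2\big) \Rightarrow \hprc^0$.

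The only genuine subtlety — rather than an obstacle — is bookkeeping: making sure the moment exponents match up so that Cauchy--Schwarz closes. The estimates give second moments of $\alpha^\eps,\beta^\eps$ (equivalently fourth moments of $\|\zed-\zedh\|_2$, $\|\zedh-\zedt\|_2$) and a second moment of $\sup_t\|\zedt_t\|_2^2$; since I only need the first moment of the product $(\|\zed_t\|_2+\|\zedt_t\|_2)\|\zed_t-\zedt_t\|_2$, one application of Cauchy--Schwarz suffices, pairing $\expt\sup_t(\|\zed_t\|_2+\|\zedt_t\|_2)^2 < \infty$ against $\expt\sup_t\|\zed_t-\zedt_t\|_2^2 \leq 2\expt\sup_t(\sqrt{\alpha^\eps_t}+\sqrt{\beta^\eps_t})^2 \leq 4\,\expt\sup_t(\alpha^\eps_t+\beta^\eps_t) \to 0$. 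No higher-order moment blow-up arises here because this is purely a consequence of the already-established propositions, not a fresh moment estimate. Everything else is routine.
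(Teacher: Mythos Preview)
Your proposal is correct and follows essentially the same route as the paper: the paper does not give a separate proof of this theorem but simply summarizes the preceding discussion, pointing out that ${\bf z}^\eps_t = e^{tB/\eps^2}\zedt_t$, that rotations preserve $\|\cdot\|_2$, and that \eqref{eq:goalsformultipnoise_z} and \eqref{eq:hprcconvgreqzbound} (i.e., Propositions \ref{prop:mul:alphaissmall}, \ref{prop:mul:betaissmall} and Lemma \ref{lem:mul:auxzbound}) combine via Cauchy--Schwarz to give \eqref{eq:hprcconvgmul}, with the weak-convergence claim following from Theorem 3.1 in \cite{Billingsley1999}. Your write-up is a slightly more explicit version of exactly this argument.
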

\begin{rmk}
Because $\zedt$ is a process without delay, weak convergence of $\frac12||\zedt||_2^2$ can be dealt using standard averaging techniques for stochastic differential equations. 
\end{rmk}

\subsection{Example}\label{subsec:mul_examp}
Consider \eqref{eq:considerMAINmul} with $G\equiv 0$.
The corresponding $\zedt$ satisfies
\begin{align*}
d\zedt&=e^{-tB/\eps^2}M e^{tB/\eps^2} \zedt_t dW, \qquad M=\Psiz L_1\Phi,
\end{align*}
with $\zedt_0$ such that $\Phi \zedt_0=\pi\xi$.
Let $\hprc^\eps_t:=\frac12||z^\eps_t||_2^2$ and $H^\eps_t:=\frac12||\zedt_t||_2^2$.  
Then applying Ito formula we have
\begin{align}\label{eq:varsigeq}
dH^\eps_t=(\zedt_t)^*e^{-tB/\eps^2}M e^{tB/\eps^2} \zedt_t dW_t \,+\,\frac12\left(\left(e^{-tB/\eps^2}M e^{tB/\eps^2} \zedt_t\right)_1^2+\left(e^{-tB/\eps^2}M e^{tB/\eps^2} \zedt_t\right)_2^2\right)dt.
\end{align}
Averaging out the fast oscillations of $\zedt$, it can be shown that\footnote{ If $\frac12||\zedt||_2^2=\hprc^0$ then $$\lim_{\mathfrak{T}\to 0}\frac{1}{\mathfrak{T}}\int_0^{\mathfrak{T}}\frac12||e^{-tB/\eps^2}M e^{tB/\eps^2} \zedt||_2^2 \,dt\,=\, C_2\hprc^0  \quad \text{ and }\quad \lim_{\mathfrak{T}\to 0}\frac{1}{\mathfrak{T}}\int_0^{\mathfrak{T}}\left((\zedt)^*e^{-tB/\eps^2}M e^{tB/\eps^2} \zedt\right)^2dt \,=\, (C_1\hprc^0)^2.$$ } as $\eps \to 0$, the distribution of $H^\eps$ converges weakly to the distribution of
\begin{align}\label{eq:varsigeqavg}
d\hprc^0_t\,\,=\,\,C_{1}\hprc^0_t\, dW_t \,\,+\,\, C_2 \hprc^0_tdt, 
\end{align}
where $2C_1^2=3(M_{11}^2+M_{22}^2)+(M_{12}+M_{21})^2+ 2M_{11}M_{22}$ and $C_2=\frac12(\sum_{i,j=1}^2M_{ij}^2)$. Using $M=\Psiz L_1\Phi$ we get
\begin{align}\label{eq:varsigeqavg_fin}
d\hprc^0_t\,\,=\,\sqrt{\left(\frac12||\Psiz||_2^2\,||L_1\Phi||_2^2\,+\,(L_1\Phi\,\Psiz)^2\right)}\,\hprc^0_t\, dW_t \,\,+\,\, \frac12||\Psiz||_2^2\,||L_1\Phi||_2^2\, \hprc^0_tdt. 
\end{align}
For \eqref{eq:varsigeqavg_fin} solution can be written explicitly. For small $\eps$, the distribution of $\hprc^0$ gives good approximation to the distribution of $\hprc^\eps$. Note that, roughly speaking, $\sqrt{2\hprc^\eps}$ is the amplitude of oscillations of $\Xeps$. Hence $\hprc^0$ can be used to understand the dynamics of $\Xeps$. The advantage is that $\hprc^0$ does not involve any delay and is one-dimensional, and hence easier to analyze and simulate numerically (see \cite{LNPRLE} for examples involving numerical simulations).


\section{Additive noise}\label{sec:addnoi}
In this section we consider \eqref{eq:considerMAINadd}  with 
$T>0$ fixed. The constants here can depend on $T$.

The strategy employed for \eqref{eq:considerMAINmul} in the previous section does not work for \eqref{eq:considerMAINadd} due to the problem of moment-closure, i.e. in trying to estimate a lower moment we end up with the task of estimating a higher moment (because of the cubic nonlinearity). For \eqref{eq:considerMAINadd} we employ the strategy stated in section \ref{subsubsec:goal4addnoisecase}.

Define 
\begin{align}\label{eq:UpsandZdef_add}
\Upsilon^\eps_s:=\sup_{\theta \in [-r,0]}\left|\int_0^s\left(\Th(\frac{s-u}{\eps^2})(1-\pih)\Ind\right)(\theta) dZ_u\right|, \qquad Z_t:=\sigma W_t.
\end{align}

\begin{prop}\label{prop:add:supUpsT}
Fix $n\geq 1$. There exists constant $\hat{C}>0$ and a family of constants $\hat{\eps}_{\delta}>0$ (indexed by $0<\delta<2$) such that, given  $\delta \in (0,2)$ we have for $\eps < \hat{\eps}_{\delta}$  \\
\begin{align}\label{prop:add:supUpsT_statement_mully}
\expt \sup_{s\in [0,T]}(\Upsilon^\eps_s)^n \,\,\,\leq\,\,\,\hat{C}\left({r\eps^\delta}\ln\left(\frac{2T}{r\eps^\delta}\right)\right)^{n/2} \,\,\xrightarrow{\eps\to 0} 0. 
\end{align} 
\end{prop}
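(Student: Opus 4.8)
The statement is the exact analogue of Proposition~\ref{prop:mul:supUpsT} with $Z_t=\sigma W_t$ replacing $Z_t=\int_0^t L_1(\hpex{s})\,dW_s$, and the plan is to mirror that proof. The key simplification is that the integrand of $Z$ is now the \emph{constant} $\sigma$, so there is no additive analogue of Lemma~\ref{lem:mul:quest:supuntilTisconst} to invoke: one only needs moment bounds for the modulus of continuity of Brownian motion, which follow from \cite{FischerNappo} (indeed $\modcont(\cdot,T;\sigma W)=|\sigma|\,\modcont(\cdot,T;W)$). Write $\phi:=(1-\pih)\Ind\in\hat Q$, $K^\eps_\theta(s,u):=\big(\Th(\tfrac{s-u}{\eps^2})\phi\big)(\theta)$, and $J^\eps_s(\theta):=\int_0^s K^\eps_\theta(s,u)\,\sigma\,dW_u$, so that $\Upsilon^\eps_s=\sup_{\theta\in[-r,0]}|J^\eps_s(\theta)|$; the goal is to bound $\sup_{s\in[0,T]}\Upsilon^\eps_s$ in $L^n$.

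First I would record the kernel facts that drive the argument. (i) Uniformly in $\theta$, $|K^\eps_\theta(s,u)|\le Ke^{-\kappa(s-u)/\eps^2}\|\phi\|$, from the exponential decay of $\Th$ on $\hat Q$. (ii) Since $\big(\Th(t)\phi\big)(\theta)$ is the value at time $t+\theta$ of the solution of \eqref{eq:detDDE} started from $\phi$, we have $K^\eps_\theta(s,u)=\stabsol\!\big(\tfrac{s-u}{\eps^2}+\theta\big)$ whenever $\tfrac{s-u}{\eps^2}+\theta\ge0$, with $\stabsol,\stabsol'$ exponentially decaying by \eqref{eq:stabsolexpdecesti}--\eqref{eq:expdecayforhprime}. (iii) On the complementary range $\tfrac{s-u}{\eps^2}+\theta<0$ (which forces $u$ to lie within $\eps^2r$ of $s$), $K^\eps_\theta(s,u)=-(\Phi\Psiz)\big(\tfrac{s-u}{\eps^2}+\theta\big)$, a \emph{fixed} $C^\infty$ bounded function, because $\pih\Ind=\Phi\Psiz$ by \eqref{eq:piIdeqPsiz} and $\Ind(\theta')=0$ for $\theta'<0$. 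The only nuisance is that $\phi$ has a jump of size $1$ at $0$, so $u\mapsto K^\eps_\theta(s,u)$ jumps at $u_*:=s+\eps^2\theta$.

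Fix $\delta\in(0,2)$ and split $\int_0^s=\int_0^{(s-r\eps^\delta)_+}+\int_{(s-r\eps^\delta)_+}^s$ as in Proposition~\ref{prop:mul:supUpsT}. For the \emph{far} part ($u\le s-r\eps^\delta$), here $\tfrac{s-u}{\eps^2}+\theta\ge r\eps^{\delta-2}+\theta\ge0$, so $K^\eps_\theta(s,u)=\stabsol(\tfrac{s-u}{\eps^2}+\theta)$; a deterministic integration by parts in the argument plus stochastic Fubini rewrites this piece as $\stabsol(r\eps^{\delta-2}+\theta)\,\sigma W_{(s-r\eps^\delta)_+}+\eps^{-2}\int \stabsol'(\tfrac{s-u}{\eps^2}+\theta)\,\sigma W_u\,du$, and since $|\stabsol(r\eps^{\delta-2}+\theta)|\le Ke^{\kappa r}\|\phi\|e^{-\kappa r\eps^{\delta-2}}$ and $\eps^{-2}\int|\stabsol'(\tfrac{s-u}{\eps^2}+\theta)|\,du\le\int_{r\eps^{\delta-2}/2}^\infty|\stabsol'|\le Ce^{-\kappa r\eps^{\delta-2}/2}$, the far part is bounded by $C\,e^{-\kappa r\eps^{\delta-2}/2}\sup_{t\le T}|W_t|$, \emph{uniformly} in $s\le T$ and $\theta$ --- smaller than any power of $\eps$. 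For the \emph{near} part (and for $s\le r\eps^\delta$, which is entirely near-diagonal), split the integral at the jump point $u_*$: on $[0,u_*]$ use $K^\eps_\theta(s,u)=\stabsol(\tfrac{s-u}{\eps^2}+\theta)$ and on $[u_*,s]$ use $K^\eps_\theta(s,u)=-(\Phi\Psiz)(\tfrac{s-u}{\eps^2}+\theta)$, integrate by parts on each piece via stochastic Fubini, and use that $\eps^{-2}\int_0^{u_*}|\stabsol'(\tfrac{s-u}{\eps^2}+\theta)|\,du$ and $\eps^{-2}\int_{u_*}^s|(\Phi\Psiz)'(\tfrac{s-u}{\eps^2}+\theta)|\,du$ are $O(1)$ (by the same substitution), while the leading boundary term on $[0,u_*]$ combines with the mean of the Fubini term into $\stabsol(\tfrac{s}{\eps^2}+\theta)\,\sigma W_{u_*}$, exponentially small once $s\ge r\eps^\delta$. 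All the Brownian increments that remain are over windows of length $\le r\eps^\delta$, so this yields $|J^\eps_s(\theta)|\le C\big(\modcont(r\eps^\delta,T;W)+e^{-\kappa r\eps^{\delta-2}/2}\sup_{t\le T}|W_t|\big)$ uniformly in $s,\theta$ (for $s\le r\eps^\delta$ just use $\sup_{t\le r\eps^\delta}|W_t|\le\modcont(r\eps^\delta,T;W)$), hence the same bound for $\sup_{s\in[0,T]}\Upsilon^\eps_s$.

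Finally, taking $n$-th moments and using $(a+b)^n\le2^{n-1}(a^n+b^n)$, $\expt\sup_{s\in[0,T]}(\Upsilon^\eps_s)^n\le C\big(\expt\,\modcont(r\eps^\delta,T;W)^n+e^{-n\kappa r\eps^{\delta-2}/2}\,\expt\sup_{t\le T}|W_t|^n\big)$; the first term is controlled by the Fischer--Nappo moment estimate $\expt\,\modcont(h,T;W)^n\le C_n\big(h\ln(2T/h)\big)^{n/2}$ with $h=r\eps^\delta$, the second is a finite constant by Burkholder--Davis--Gundy/Doob, and since $e^{-n\kappa r\eps^{\delta-2}/2}$ decays faster than $\big(r\eps^\delta\ln(2T/(r\eps^\delta))\big)^{n/2}$ as $\eps\to0$ it is absorbed for $\eps<\hat\eps_\delta$, giving the claimed bound. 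The main obstacle, as in the multiplicative case, is arranging the estimates so the supremum over \emph{both} $s$ and $\theta$ is taken at once: the device is to use integration by parts / stochastic Fubini to convert each It\^o integral (whose kernel depends on $s$ and $\theta$) into deterministic integrals against a single Brownian path, after which uniformity in $(s,\theta)$ is immediate from the modulus of continuity; the discontinuity of $\Ind$ at $0$ is what forces the preliminary splitting of the kernel at $u_*=s+\eps^2\theta$.
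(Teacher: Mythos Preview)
Your proposal is correct and follows essentially the same route as the paper's proof (which is just the proof of Proposition~\ref{prop:mul:supUpsT} with $Z_t=\sigma W_t$, so that Lemma~\ref{lem:mul:quest:supuntilTisconst} is not needed): split at the kernel's jump point $u_*=(s+\eps^2\theta)\vee 0$, split the $\stabsol$-part into far/near at $s-r\eps^\delta$, use integration by parts plus the exponential decay \eqref{eq:stabsolexpdecesti}--\eqref{eq:expdecayforhprime} on the far piece, and reduce the near pieces to Brownian increments over windows of length $\le r\eps^\delta$ controlled by Fischer--Nappo. The only cosmetic differences are that the paper reparametrizes $t=s+\eps^2\theta$ to reduce the $\stabsol$-part to a single supremum over $t\in[0,T]$, and handles the $\pih\Ind$-part via the trigonometric addition formula (introducing $M^{c,\eps},M^{s,\eps}$) rather than by integrating $(\Phi\Psiz)'$ by parts; both devices land on the same modulus-of-continuity bound.
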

Proof is same as that of proposition \ref{prop:mul:supUpsT} with appropriate changes to account for $Z_t=\sigma W_t$; and we dont need anything analogous to lemma \ref{lem:mul:quest:supuntilTisconst}.

Fix a constant $C_{\stopt}>0$ and define the stopping time 
\begin{align}\label{eq:stoptimedefy}
\stopteps=\inf\{t\geq 0:||\pi \pjeps_t\Xeps||\geq C_{\stopt}\}.
\end{align}
The stopping time helps in arriving at a bound on the norm of stable-mode $(1-\pi)\pjeps_t\Xeps$ (until time $T\wedge \stopteps$) without worrying about what happens to the critical-mode $\pi\pjeps_t\Xeps$. Hence, as an intermediate step we establish results that hold until time $T\wedge \stopteps$ and later get rid of the stopping time $\stopteps$.

Write $\hpex{t}=\Phi z^\eps_t+y^\eps_t$. Then $z^\eps_t$ and $y^\eps_t$ satisfy the variation-of-constants formula \eqref{eq:zproj_addn} and \eqref{eq:vocformstateW_addn}. Define $\ydec^\eps$ and $\stbp^\eps$ as in \eqref{eq:ydecstbpdef}.

\begin{prop}\label{prop:add:stabnorm}
Let $\hat{C}$ and $\hat{\eps}_{\delta}$ be the same as in proposition \ref{prop:add:supUpsT}. There exists a family of constants $\eps_{a,C_{\stopt}}>0$ 
such that, given $a\in [0,1)$ and $\delta \in (2a,2)$, we have for $\eps < \min\{\hat{\eps}_{\delta},\,\eps_{a,C_{\stopt}}\}$
\begin{align}\label{add:newprop:supboundGron_statement_Gcub_lin_a_add}
\mbbP\bigg[\sup_{s\in [0,T\wedge\stopteps]}||\stbp^\eps_s|| \,\,\leq\,\,{{8\eps^a} } \bigg] \,\,\,\geq\,\,\,1-\hat{C}\eps^{-a}\sqrt{{r\eps^\delta}\ln\left(\frac{T}{r\eps^\delta}\right)}.
\end{align}
Here $\eps_{a,C_{\stopt}}$ is of the order $\Ord(\min\{C_{\stopt}^{-3/(2-a)},C_{\stopt}^{-3/2a}\})$ for large $C_{\stopt}$.
\end{prop}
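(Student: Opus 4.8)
The plan is to bound $\stbp^\eps_t = y^\eps_t - \ydec^\eps_t$ using the variation-of-constants formula \eqref{eq:vocformstateW_addn}. Subtracting $\ydec^\eps_t = \Th(t/\eps^2)y^\eps_0$ from \eqref{eq:vocformstateW_addn} leaves exactly two terms: the deterministic convolution integral $\int_0^t \Th(\tfrac{t-u}{\eps^2})(1-\pih)\Ind\, G(\Phi z^\eps_u + y^\eps_u)\,du$ and the stochastic convolution, whose sup-over-$\theta$ is $\Upsilon^\eps_t$ from \eqref{eq:UpsandZdef_add}. The stochastic part is handled directly by Proposition \ref{prop:add:supUpsT}: by Markov's (Chebyshev's) inequality with $n=1$, $\mbbP[\sup_{s\in[0,T]}\Upsilon^\eps_s > \eps^a] \le \eps^{-a}\hat C \sqrt{r\eps^\delta \ln(T/(r\eps^\delta))}$, which accounts for the failure probability in the statement. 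So on a good event of the stated probability we have $\sup_{s\le T}\Upsilon^\eps_s \le \eps^a$, and it remains to bound the deterministic integral pathwise on $[0,T\wedge\stopteps]$ by something like $7\eps^a$.

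The key point for the deterministic term is that the stopping time $\stopteps$ controls the critical mode: for $t \le \stopteps$ we have $\|\pi\pjeps_t\Xeps\| = \|\Phi z^\eps_t\| \le C_\stopt$, hence (since $\Phi$ has a fixed operator norm) $\|z^\eps_t\|_2$ is bounded by a constant multiple of $C_\stopt$. Writing $\|\Phi z^\eps_u + y^\eps_u\| \le \|\Phi z^\eps_u\| + \|\ydec^\eps_u\| + \|\stbp^\eps_u\|$ and using the exponential decay \eqref{eq:ydecexpfastdec} of $\ydec^\eps$, the cubic growth of $G$ from \eqref{eq:considerMAINadd} gives a pointwise bound on $|G(\Phi z^\eps_u + y^\eps_u)|$ of the form $C(C_\stopt^3 + \|(1-\pi)\pjeps_0\Xeps\|^3 + \|\stbp^\eps_u\|^3)$, with the middle term absorbed into a constant depending on the (fixed, deterministic) initial condition. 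Then, using $\int_0^t \|\Th(\tfrac{t-u}{\eps^2})(1-\pih)\Ind\|\,du \le K\|(1-\pih)\Ind\|\int_0^t e^{-\kappa(t-u)/\eps^2}du \le \eps^2 K\|(1-\pih)\Ind\|/\kappa$, the deterministic integral is at most $\eps^2 \cdot C(C_\stopt^3 + \text{const} + \sup_{u\le t}\|\stbp^\eps_u\|^3)$. This is the crux: the $\eps^{-2}$-scaled generator buys a factor $\eps^2$.

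The remaining work is a bootstrap/closing argument. On the good event, for $t \le T\wedge\stopteps$ set $M_t := \sup_{s\le t}\|\stbp^\eps_s\|$; we have shown $M_t \le \eps^a + \eps^2 C(C_\stopt^3 + C' + M_t^3)$. One chooses $\eps_{a,C_\stopt}$ small enough that $\eps^2 C C_\stopt^3 \le \eps^a$ (forcing $\eps \lesssim C_\stopt^{-3/(2-a)}$) and $\eps^2 C C' \le \eps^a$, so the bound reads $M_t \le 3\eps^a + \eps^2 C M_t^3$ as long as $M_t$ stays in the region where this self-improves. A standard continuity/connectedness argument (the map $t\mapsto M_t$ is continuous, starts at $0$, and cannot cross the value $8\eps^a$: if $M_t = 8\eps^a$ then $\eps^2 C M_t^3 = 8^3 C \eps^{2+3a} \le \eps^a$ provided $\eps \lesssim C_\stopt^{-3/(2a)}$ — actually the exponent $3a$ vs $a$ needs $\eps^{2+2a}$ small, giving the second constraint on $\eps_{a,C_\stopt}$ as $\Ord(C_\stopt^{-3/(2a)})$ after tracking how $C$ depends on $C_\stopt$) yields $M_t \le 8\eps^a$ throughout $[0,T\wedge\stopteps]$.

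The main obstacle I anticipate is bookkeeping the two competing smallness requirements on $\eps_{a,C_\stopt}$ and extracting the precise order $\Ord(\min\{C_\stopt^{-3/(2-a)}, C_\stopt^{-3/(2a)}\})$: one requirement comes from absorbing the $C_\stopt^3$ source term (giving the $-3/(2-a)$ exponent), the other from closing the cubic feedback $\eps^2 M_t^3$ against the linear target $\eps^a$ (giving the $-3/(2a)$ exponent), and one must verify the constant $C$ in the $G$-bound does not secretly depend on $C_\stopt$ in a way that spoils these rates. A secondary subtlety is that Proposition \ref{prop:add:supUpsT} is stated on $[0,T]$ rather than up to $\stopteps$, but since $\Upsilon^\eps$ does not involve $\stopteps$ this is harmless — the good event is defined on all of $[0,T]$ and then intersected with $\{t \le \stopteps\}$ only when estimating the deterministic term.
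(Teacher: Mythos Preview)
Your proposal is correct and follows the paper's strategy almost exactly: decompose $\stbp^\eps$ via the variation-of-constants formula \eqref{eq:vocformstateW_addn} into the stochastic convolution $\Upsilon^\eps$ plus the deterministic $G$-integral, control the former by Markov's inequality and Proposition~\ref{prop:add:supUpsT}, use the stopping time to bound $|G(\pjeps_u\Xeps)|$ by $C(1+\|\stbp^\eps_u\|^3+\|\ydec^\eps_u\|^3)$ with $C$ of order $C_\stopt^3$, and harvest the factor $\eps^2$ from $\int_0^t e^{-\kappa(t-u)/\eps^2}du$.

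The only substantive difference is the closing step. The paper applies a Bihari-type nonlinear Gronwall inequality (Theorem~2.4.8 in \cite{Bainov}) directly to the integral inequality $\|\stbp^\eps_s\|\le C\eps^2+2\eps^a+C\int_0^s e^{-\kappa(s-u)/\eps^2}\|\stbp^\eps_u\|^3\,du$, obtaining the closed form $\|\stbp^\eps_s\|\le 4\eps^a/\sqrt{1-32CT\eps^{2a}}$; the requirement that the denominator stay bounded away from zero gives $\eps^{2a}\lesssim C_\stopt^{-3}$, whence the stated order $C_\stopt^{-3/(2a)}$. Your route instead passes to $M_t=\sup_{s\le t}\|\stbp^\eps_s\|$, keeps the $\eps^2$ from the kernel in the feedback term, and closes by continuity. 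Carrying your own computation through (with the $G$-constant tracked as $C\sim C_\stopt^3$, as you correctly flag in your final paragraph) actually yields the weaker requirement $\eps^{2+2a}\lesssim C_\stopt^{-3}$, i.e.\ $\eps\lesssim C_\stopt^{-3/(2+2a)}$ rather than the $C_\stopt^{-3/(2a)}$ you wrote; since this is \emph{less} restrictive than the paper's constraint, the proposition as stated is still established. Both closures are standard; yours is marginally more elementary and avoids the external reference.
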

In \eqref{add:newprop:supboundGron_statement_Gcub_lin_a_add} we obtain a bound on $||\stbp^\eps||$ which does not depend on $C_{\stopt}$ in spite of the cubic nonlinearity---hence the $\eps$ should be made really small. Larger the $C_{\stopt}$, smaller the $\eps$ we need to consider.
Proof is by straight forward application of exponential decay on $\hat{Q}$, Markov and Gronwall inequalities. Proof is given in appendix \ref{apsec:prf:prop:add:stabnorm}

\vspace{12pt}

Removing the fast rotation induced by $B$, i.e. writing $\zed_t=e^{-tB/\eps^2}z^\eps_t$ we have
$$d\zed_t=e^{-tB/\eps^2}\Psiz G(\Phi e^{tB/\eps^2}\zed_t+y^\eps_t)dt+e^{-tB/\eps^2}\Psiz \sigma dW_t, \qquad \zed_0=z^\eps_0.$$
Let $\zedh$ be governed by
$$d\zedh_t=e^{-tB/\eps^2}\Psiz G(\Phi e^{tB/\eps^2}\zedh_t+\ydec^\eps_t)dt+e^{-tB/\eps^2}\Psiz \sigma dW_t, \qquad \zedh_0=z^\eps_0,$$
i.e., in $\zedh$ we are totally ignoring $y$ part except for the effect of the initial condition ($\ydec^\eps_t=\Th(t/\eps^2)y^\eps_0$). Note that $\zedh$ is a process without delay.

We want to show that until time $T\wedge\stopteps$, error in approximating $\zed$ by $\zedh$ is small.  For this purpose, define $$\alpha^\eps_t=\frac12||\zed_t-\zedh_t||_2^2=\frac12((\zed_t-\zedh_t)_1^2+(\zed_t-\zedh_t)_2^2).$$
and let $\Gamma_t=\left(\sum_{i=1}^2(\zed_t-\zedh_t)_i(e^{-tB/\eps^2}\Psiz)_i\right)$. Then $\alpha^\eps_t$ is governed by
$$d\alpha^\eps_t\,\,=\,\,\mathscr{B}_t\,dt, \qquad \alpha^\eps_0=0,$$
where
\begin{align*}
\mathscr{B}_t&=\Gamma_t\left(G(\Phi e^{tB/\eps^2}\zed_t+y_t)-G(\Phi e^{tB/\eps^2}\zedh_t+\ydec^\eps_t)\right).
\end{align*}

The following lemma gives a process dominating $\mathscr{B}_t$. This helps in applying Gronwall inequality to arrive at proposition \ref{prop:add:alhaissmall}.

\begin{lem}\label{lem:add:aux4comparison}
$\exists\, C>0$ (is of the order $\Ord(C_{\stopt}^2)$ for large $C_{\stopt}$) such that if $\mathfrak{B}$ is defined by
\begin{align}\label{eq:def:mfBadd}
\mathfrak{B}(\alpha,p)\,\,&:=\,\,C\,\sqrt{2\alpha}\,\sum_{j=1}^3(p^j+(\sqrt{2\alpha})^j),
\end{align}
then $|\mathscr{B}_t|\leq \mathfrak{B}(\alpha_t,||\stbp^\eps_t||)$ for $t\in [0,T\wedge\stopteps]$.
\end{lem}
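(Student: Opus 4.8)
The plan is to expand $\mathscr{B}_t$ using the explicit form of $G$ from \eqref{eq:considerMAINadd}, namely $G(\eta)=\int_{-r}^0\eta(\theta)d\nu_1(\theta)+\int_{-r}^0\eta^3(\theta)d\nu_3(\theta)$, and then bound each resulting piece in terms of $\alpha^\eps_t$ and $||\stbp^\eps_t||$. First I would note that $|\Gamma_t|\leq ||\zed_t-\zedh_t||_2\,||e^{-tB/\eps^2}\Psiz||_2 = \sqrt{2\alpha^\eps_t}\,||\Psiz||_2$, since $e^{-tB/\eps^2}$ is an isometry on $\R^2$; this produces the leading $\sqrt{2\alpha}$ factor in \eqref{eq:def:mfBadd}. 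It remains to control the $G$-difference term $G(\Phi e^{tB/\eps^2}\zed_t+y^\eps_t)-G(\Phi e^{tB/\eps^2}\zedh_t+\ydec^\eps_t)$.

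Write $A_t:=\Phi e^{tB/\eps^2}\zed_t+y^\eps_t$ and $\hat A_t:=\Phi e^{tB/\eps^2}\zedh_t+\ydec^\eps_t$, so that $A_t-\hat A_t=\Phi e^{tB/\eps^2}(\zed_t-\zedh_t)+(y^\eps_t-\ydec^\eps_t)=\Phi e^{tB/\eps^2}(\zed_t-\zedh_t)+\stbp^\eps_t$. Using $||\Phi e^{tB/\eps^2}v||\leq ||\Phi||\,||v||_2$ (with $||\Phi||$ the operator norm of the two-column basis on $\C$, which is bounded by a constant since $\Phi_1,\Phi_2$ are $\cos,\sin$) one gets $||A_t-\hat A_t||\leq c_0(\sqrt{2\alpha^\eps_t}+||\stbp^\eps_t||)$ for an absolute constant $c_0$. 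For the linear part of $G$ the difference is simply $\int_{-r}^0(A_t-\hat A_t)(\theta)\,d\nu_1(\theta)$, bounded by $\mathrm{Var}(\nu_1)\,||A_t-\hat A_t||\leq C_1(\sqrt{2\alpha^\eps_t}+||\stbp^\eps_t||)$. For the cubic part use the algebraic identity $a^3-b^3=(a-b)(a^2+ab+b^2)$ pointwise in $\theta$, so $|A_t^3(\theta)-\hat A_t^3(\theta)|\leq |A_t(\theta)-\hat A_t(\theta)|\,(|A_t(\theta)|^2+|A_t(\theta)||\hat A_t(\theta)|+|\hat A_t(\theta)|^2)\leq 3\,||A_t-\hat A_t||\,(||A_t||^2+||\hat A_t||^2)$. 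Then I invoke the stopping time: for $t\in[0,T\wedge\stopteps]$ we have $||\pi\pjeps_t\Xeps||\leq C_{\stopt}$, i.e. $||\Phi z^\eps_t||\leq C_{\stopt}$, hence $||\zed_t||_2=||z^\eps_t||_2$ is controlled by a constant multiple of $C_{\stopt}$; together with the exponential decay bound \eqref{eq:ydecexpfastdec} on $||\ydec^\eps_t||$ (so $||\ydec^\eps_t||\leq K||(1-\pi)\pjeps_0\Xeps||$, a fixed constant depending on $\xi$), one sees $||\hat A_t||\leq C(C_{\stopt})$ and $||A_t||\leq ||\hat A_t||+||A_t-\hat A_t||\leq C(C_{\stopt})+c_0(\sqrt{2\alpha^\eps_t}+||\stbp^\eps_t||)$. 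Feeding these back, $|A_t^3-\hat A_t^3|\lesssim (\sqrt{2\alpha^\eps_t}+||\stbp^\eps_t||)\big(C(C_{\stopt})^2+\alpha^\eps_t+||\stbp^\eps_t||^2\big)$, and the $\nu_3$-integral contributes a factor $\mathrm{Var}(\nu_3)$.

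Collecting the linear and cubic contributions and multiplying by $|\Gamma_t|\leq ||\Psiz||_2\sqrt{2\alpha^\eps_t}$, one obtains $|\mathscr{B}_t|\leq C\sqrt{2\alpha^\eps_t}\,\big[(\sqrt{2\alpha^\eps_t}+||\stbp^\eps_t||)+(\sqrt{2\alpha^\eps_t}+||\stbp^\eps_t||)(1+\alpha^\eps_t+||\stbp^\eps_t||^2)\big]$, where $C=\Ord(C_{\stopt}^2)$ for large $C_{\stopt}$ (the dominant contribution being the $C(C_{\stopt})^2$ term from $||\hat A_t||^2$). Expanding the bracket gives a sum of monomials $(\sqrt{2\alpha^\eps_t})^j$ and $||\stbp^\eps_t||(\sqrt{2\alpha^\eps_t})^{j-1}$-type terms; after absorbing numerical constants and using $pq\leq \frac12(p^2+q^2)$-style bounds to homogenize, everything fits under $C\sqrt{2\alpha}\sum_{j=1}^3(p^j+(\sqrt{2\alpha})^j)$ with $p=||\stbp^\eps_t||$, which is exactly \eqref{eq:def:mfBadd}. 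The main obstacle — and the reason the stopping time is introduced — is precisely that without the bound $||z^\eps_t||_2\leq C(C_{\stopt})$ the term $||A_t||^2$ appearing from the cubic could only be bounded by $\alpha^\eps_t$ plus higher powers of the critical mode, which would not close; the stopping time quarantines the critical mode at size $O(C_{\stopt})$ and leaves a clean autonomous differential inequality $d\alpha^\eps_t\leq \mathfrak{B}(\alpha^\eps_t,||\stbp^\eps_t||)dt$ in which the only remaining "input" is the small process $||\stbp^\eps_t||$, readying it for the Gronwall argument of proposition \ref{prop:add:alhaissmall}. A minor technical point to handle carefully is the bookkeeping of the constant's dependence on $C_{\stopt}$ so as to honestly claim $C=\Ord(C_{\stopt}^2)$.
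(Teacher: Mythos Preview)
Your approach is essentially correct and reaches the same conclusion as the paper, but via a slightly different decomposition, and there is one small slip worth flagging.

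\textbf{The slip.} You write ``one sees $\|\hat A_t\|\leq C(C_{\stopt})$'', but $\hat A_t=\Phi e^{tB/\eps^2}\zedh_t+\ydec^\eps_t$ involves $\zedh_t$, which is \emph{not} directly controlled by the stopping time $\stopteps$ (the stopping time bounds $\|\Phi z^\eps_t\|=\|\Phi e^{tB/\eps^2}\zed_t\|$). The clean statement is $\|\hat A_t\|\le C(C_{\stopt})+c_0\sqrt{2\alpha^\eps_t}$, obtained by writing $\zedh_t=\zed_t+(\zedh_t-\zed_t)$. You effectively recover from this in the next line (your bound $|A_t^3-\hat A_t^3|\lesssim(\sqrt{2\alpha}+\|\stbp\|)(C(C_{\stopt})^2+\alpha+\|\stbp\|^2)$ does carry the $\alpha$ term), so the argument still closes; but the intermediate claim as written is not justified.

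\textbf{Comparison with the paper.} The paper does not use the $a^3-b^3=(a-b)(a^2+ab+b^2)$ factorization. Instead it expands both cubes binomially around the common \emph{bounded} base $\Phi e^{tB/\eps^2}\zed_t$ (and then again around $\ydec^\eps_t$), so that the only ``perturbations'' appearing are $\stbp^\eps_t$ and $\Phi e^{tB/\eps^2}(\zedh_t-\zed_t)$, whose norms are exactly $\|\stbp^\eps_t\|$ and (up to a constant) $\sqrt{2\alpha^\eps_t}$. This yields directly
\[
|G(\ldots)-G(\ldots)|\;\le\;C\sum_{j=1}^3\bigl(\|\stbp^\eps_t\|^j+(\sqrt{2\alpha^\eps_t})^j\bigr),
\]
with $C=\Ord(C_{\stopt}^2)$, and no cross terms like $\sqrt{2\alpha}\,p^2$ ever arise --- so the Young-inequality ``homogenization'' step you need at the end is avoided entirely. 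Your route via $a^3-b^3$ is more elementary but produces mixed monomials that then have to be absorbed; the paper's double binomial expansion is tailored so that the bounded quantities ($\zed$, $\ydec$) are isolated from the start and the $\Ord(C_{\stopt}^2)$ dependence is transparent.
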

Proof given in appendix \ref{apsec:prf:lem:add:aux4comparison}.
\begin{prop}\label{prop:add:alhaissmall}
Let $\hat{C}$ and $\hat{\eps}_{\delta}$ be the same as in proposition \ref{prop:add:supUpsT}. There exists two families of constants $\eps_{a,C_{\stopt},1}>0$, $\eps_{a,C_{\stopt},2}>0$ 
such that, given $a\in [0,1)$ and $\delta \in (2a,2)$, we have for $\eps < \min\{\hat{\eps}_{\delta},\,\eps_{a,C_{\stopt},1},\,\eps_{a,C_{\stopt},2}\}$
\begin{align}\label{eq:pepsdefalp}
\mbbP[\sup_{t\in [0,T\wedge \stopteps]}\alpha^\eps_t \,\,\leq\,\,\eps^{a/2}]\,\,\geq \,\,1-\hat{C}\eps^{-a}\sqrt{{r\eps^\delta}\ln\left(\frac{T}{r\eps^\delta}\right)}\,\,=:\,\,p_{\eps} \xrightarrow{\eps \to 0}1. 
\end{align}
Here $\eps_{a,C_{\stopt},1}$ is of the order $\Ord(\min\{C_{\stopt}^{-3/(2-a)},C_{\stopt}^{-3/2a}\})$ (these are from proposition \ref{prop:add:stabnorm})
and $\eps_{a,C_{\stopt},2}$ is of the order $\Ord(\exp(-30C_{\stopt}^2T/a))$ for large $C_{\stopt}$.
\end{prop}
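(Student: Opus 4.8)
The plan is to convert the identity $d\alpha^\eps_t=\mathscr{B}_t\,dt$ into a scalar pathwise differential inequality and then apply Gronwall on the high-probability event furnished by Proposition \ref{prop:add:stabnorm}, using a bootstrap to keep $\alpha^\eps$ out of the regime in which the $\alpha^2$-type term hidden in $\mathfrak{B}$ could force finite-time blow-up. Fix $a\in[0,1)$ and $\delta\in(2a,2)$, set $\eps_{a,C_{\stopt},1}:=\eps_{a,C_{\stopt}}$ (the constant from Proposition \ref{prop:add:stabnorm}), and for $\eps<\min\{\hat{\eps}_\delta,\eps_{a,C_{\stopt},1}\}$ work on the event
\[
A_\eps\,:=\,\Big\{\sup_{s\in[0,T\wedge\stopteps]}\|\stbp^\eps_s\|\le 8\eps^a\Big\},\qquad \mbbP[A_\eps]\,\ge\,p_\eps.
\]
On $A_\eps$, Lemma \ref{lem:add:aux4comparison} together with $\alpha^\eps_0=0$ gives, for a.e. $t\in[0,T\wedge\stopteps]$,
\[
\frac{d}{dt}\alpha^\eps_t\,\le\,|\mathscr{B}_t|\,\le\,\mathfrak{B}\big(\alpha^\eps_t,\|\stbp^\eps_t\|\big)\,\le\,C\sqrt{2\alpha^\eps_t}\sum_{j=1}^3\big((8\eps^a)^j+(\sqrt{2\alpha^\eps_t})^j\big),
\]
with $C=\Ord(C_{\stopt}^2)$.

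Now comes the bootstrap. Set $\tau^\ast:=\inf\{t\ge0:\alpha^\eps_t>\tfrac12\}$. On $[0,\tau^\ast\wedge T\wedge\stopteps]$ one has $\sqrt{2\alpha^\eps_t}\le1$, hence $(\sqrt{2\alpha^\eps_t})^j\le\sqrt{2\alpha^\eps_t}$ for $j=2,3$; and for $\eps\le1$ one has $(8\eps^a)^j\le 8^j\eps^a$. After bounding the sum and using Young's inequality on the cross term $\eps^a\sqrt{2\alpha^\eps_t}\le\tfrac12\eps^{2a}+\alpha^\eps_t$, the inequality becomes linear,
\[
\frac{d}{dt}\alpha^\eps_t\,\le\,c_1\,C\,\alpha^\eps_t\,+\,c_2\,C\,\eps^{2a}
\]
for absolute constants $c_1,c_2$, whence Gronwall yields $\alpha^\eps_t\le\tfrac{c_2}{c_1}\eps^{2a}\big(e^{c_1Ct}-1\big)\le\tfrac{c_2}{c_1}\eps^{2a}e^{c_1CT}$ on $[0,\tau^\ast\wedge T\wedge\stopteps]$. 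I then choose $\eps_{a,C_{\stopt},2}>0$ — which, since $C=\Ord(C_{\stopt}^2)$ sits in the exponent $e^{c_1CT}$, is forced to be of order $\Ord(\exp(-30C_{\stopt}^2 T/a))$ — so small that for $\eps<\eps_{a,C_{\stopt},2}$ both $\tfrac{c_2}{c_1}\eps^{2a}e^{c_1CT}<\tfrac12$ \emph{and} $\tfrac{c_2}{c_1}\eps^{2a}e^{c_1CT}\le\eps^{a/2}$ (the latter being $\eps^{3a/2}\le\tfrac{c_1}{c_2}e^{-c_1CT}$). The first strict inequality, with continuity of $t\mapsto\alpha^\eps_t$ and $\alpha^\eps_0=0$, forces $\tau^\ast>T\wedge\stopteps$, so the Gronwall bound is in fact valid on all of $[0,T\wedge\stopteps]$; the second then gives $\sup_{t\in[0,T\wedge\stopteps]}\alpha^\eps_t\le\eps^{a/2}$ on $A_\eps$. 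Hence
\[
\mbbP\Big[\sup_{t\in[0,T\wedge\stopteps]}\alpha^\eps_t\le\eps^{a/2}\Big]\,\ge\,\mbbP[A_\eps]\,\ge\,p_\eps,
\]
and $p_\eps\to1$ as $\eps\to0$ because $\delta>2a$ makes $\eps^{-a}\sqrt{r\eps^\delta\ln(T/(r\eps^\delta))}=\sqrt{r}\,\eps^{\delta/2-a}\sqrt{\ln(T/(r\eps^\delta))}\to0$.

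The step I expect to be the main obstacle is the interplay of the bootstrap with the $C_{\stopt}$-dependence: one must check that the threshold $\tfrac12$ is never reached on $[0,T\wedge\stopteps]$ using only the Gronwall estimate valid up to $\tau^\ast$, and one must follow the constant $C=\Ord(C_{\stopt}^2)$ from Lemma \ref{lem:add:aux4comparison} through the exponential factor $e^{c_1CT}$ to arrive at the stated order $\Ord(\exp(-30C_{\stopt}^2 T/a))$ for $\eps_{a,C_{\stopt},2}$ — in particular, remarking that although this forces $\eps$ to be exponentially small in $C_{\stopt}^2$, that is harmless since $C_{\stopt}$ is a fixed constant, independent of $\eps$. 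A minor technical caveat: $\mathfrak{B}(\cdot,p)$ is not Lipschitz in $\alpha$ at $\alpha=0$ (the $\sqrt{2\alpha}$ factor), so the comparison should be run via the differential inequality directly, or by comparing with the solution of $\dot u=c_1Cu+c_2C\eps^{2a}$ whose drift \emph{is} Lipschitz, rather than by invoking a comparison theorem that presupposes Lipschitz coefficients.
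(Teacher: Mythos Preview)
Your proposal is correct and follows essentially the same approach as the paper: restrict to the high-probability event from Proposition \ref{prop:add:stabnorm}, introduce a bootstrap stopping time for $\alpha^\eps$ (you use threshold $\tfrac12$, the paper uses $1$), linearize the differential inequality on that region (you via Young's inequality, the paper via $\sqrt{2\alpha}\le 2(1+\alpha)$ and $\alpha^2<\alpha$), apply Gronwall, and close the bootstrap by choosing $\eps_{a,C_{\stopt},2}$ small enough. The only substantive difference is that your Young-inequality linearization yields a forcing term of order $\eps^{2a}$ rather than the paper's $\eps^a$, which changes the numerical constant in the exponent defining $\eps_{a,C_{\stopt},2}$ but not its qualitative order $\exp(-\text{const}\cdot C_{\stopt}^2 T/a)$.
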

Proof is given in appendix \ref{apsec:prf:prop:add:alhaissmall}.
\vspace{12pt}

Finally, the stopping time $\stopteps$ can be got rid as follows.

Let $\Om$ be the set of all realizations of the Brownian motion $W$ and $\om \in \Om$ denote one particular realization.

\begin{defn}\label{def:propPTqDeff}
Given $T>0$ and $q>0$, we say that ``\emph{$\zedh$ system possesses the property $\mathscr{P}(T,q)$}'' if $\exists \,C_{\stopt},\, \eps_* \,>\,0$ such that $\forall \eps<\eps_*$, we have $\mbbP[E^\eps]\geq1-q$ where 
\begin{align}\label{assonzedh_setEeps}
E^\eps:=\left\{\om\,:\,\sup_{t\in [0,T]}||\Phi e^{tB/\eps^2}\zedh_t||\,<\,0.99C_{\stopt}\right\}.
\end{align} 
\end{defn}
\begin{thm}\label{prop:helpinprobconvg}
Fix  $T>0$.  Define
$$H^\eps:=\left\{\om\,:\,\sup_{t\in[0,T]}\alpha^{\eps}_t \,\geq\,\eps^{a/2}\right\}, \qquad S^\eps:=\left\{\om\,:\,\sup_{t\in[0,T]}||\stbp^\eps_t|| \,\geq\,8\eps^{a}\right\},$$
for $a\in [0,1)$.
Fix $q>0$ and assume $\zedh$ system possesses the property $\mathscr{P}(T,q)$. Then $\exists\,\eps_q>0$ such that $\forall\,\eps<\eps_q$,
\begin{align*}
\mbbP [H^\eps]\,\,<\,\,q+2(1-p_{\eps}), \qquad \mbbP [S^\eps]\,\,<\,\,q+2(1-p_{\eps}),
\end{align*}
where $p_{\eps}\to 1$ as $\eps\to 0$ and is given explicitly  in \eqref{eq:pepsdefalp}.
\end{thm}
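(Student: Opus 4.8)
\textbf{Proof plan for Theorem \ref{prop:helpinprobconvg}.}

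The plan is to chain together the conditional estimates from Proposition \ref{prop:add:stabnorm} and Proposition \ref{prop:add:alhaissmall} — both of which hold only up to the stopping time $T\wedge\stopteps$ — with the hypothesis $\mathscr{P}(T,q)$, which controls the approximating process $\zedh$ without reference to any stopping time. The key observation is that on the event $E^\eps$ the approximating quantity $\Phi e^{tB/\eps^2}\zedh_t$ stays strictly below $0.99C_{\stopt}$ uniformly on $[0,T]$; if in addition $\sup_{t\in[0,T\wedge\stopteps]}\alpha^\eps_t\leq \eps^{a/2}$ holds, then (recalling $\alpha^\eps_t=\frac12\|\zed_t-\zedh_t\|_2^2$ and that $\|e^{tB/\eps^2}v\|_2=\|v\|_2$) the difference $\|\Phi e^{tB/\eps^2}\zed_t - \Phi e^{tB/\eps^2}\zedh_t\|$ is $O(\eps^{a/4})$, hence $\|\pi\pjeps_t\Xeps\| = \|\Phi z^\eps_t\| = \|\Phi e^{tB/\eps^2}\zed_t\| < 0.99C_{\stopt} + O(\eps^{a/4})$ on $[0,T\wedge\stopteps]$. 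For $\eps$ small enough this is $< C_{\stopt}$, which forces $\stopteps > T$ on that intersection of events. In other words, on $E^\eps \cap \{\sup_{[0,T\wedge\stopteps]}\alpha^\eps \leq \eps^{a/2}\}$ we actually have $T\wedge\stopteps = T$, so the stopping time has been removed.

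First I would make the above bootstrap precise: introduce the event $A^\eps := \{\sup_{t\in[0,T\wedge\stopteps]}\alpha^\eps_t \leq \eps^{a/2}\}$ and argue that $E^\eps \cap A^\eps \subseteq \{\stopteps > T\}$ for all $\eps$ below some threshold $\eps_q$ depending on $C_{\stopt}$ (and hence on $q$ via $\mathscr{P}(T,q)$). Then on $E^\eps\cap A^\eps$ we have $\sup_{t\in[0,T]}\alpha^\eps_t = \sup_{t\in[0,T\wedge\stopteps]}\alpha^\eps_t \leq \eps^{a/2}$, i.e. $E^\eps\cap A^\eps \subseteq (H^\eps)^c$. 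Taking complements, $H^\eps \subseteq (E^\eps)^c \cup (A^\eps)^c$, so
\begin{align*}
\mbbP[H^\eps] \,\leq\, \mbbP[(E^\eps)^c] + \mbbP[(A^\eps)^c] \,\leq\, q + (1-p_\eps),
\end{align*}
using $\mbbP[E^\eps]\geq 1-q$ from $\mathscr{P}(T,q)$ and $\mbbP[A^\eps]\geq p_\eps$ from Proposition \ref{prop:add:alhaissmall}. This already gives the claimed bound for $\mbbP[H^\eps]$ (in fact with $1-p_\eps$ rather than $2(1-p_\eps)$; the looser constant in the statement leaves room for the slightly different thresholds). For $S^\eps$, the same reasoning applies: on $E^\eps\cap A^\eps$ we have $T\wedge\stopteps=T$, so Proposition \ref{prop:add:stabnorm}'s event $\{\sup_{[0,T\wedge\stopteps]}\|\stbp^\eps\|\leq 8\eps^a\}$ becomes $\{\sup_{[0,T]}\|\stbp^\eps\|\leq 8\eps^a\} = (S^\eps)^c$ on that intersection; hence $S^\eps \subseteq (E^\eps)^c \cup (A^\eps)^c \cup \{\sup_{[0,T\wedge\stopteps]}\|\stbp^\eps\| > 8\eps^a\}$, and the three probabilities are bounded by $q$, $(1-p_\eps)$, and $(1-p_\eps)$ respectively (the last two share the same $\hat C\eps^{-a}\sqrt{r\eps^\delta\ln(T/(r\eps^\delta))}$ tail), giving $\mbbP[S^\eps] < q + 2(1-p_\eps)$.

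The main obstacle — and the step requiring the most care — is the bootstrap argument showing $E^\eps\cap A^\eps \subseteq \{\stopteps > T\}$: one must verify that $\|\pi\pjeps_t\Xeps\|$ and $\|\Phi e^{tB/\eps^2}\zed_t\|$ genuinely coincide (they do, since $\pi\pjeps_t\Xeps = \Phi z^\eps_t$ and $z^\eps_t = e^{tB/\eps^2}\zed_t$), and that the constant comparing $\|\Phi v\|$ (sup-norm in $\C$) with $\|v\|_2$ (Euclidean norm in $\R^2$) is harmless — it is, being a fixed finite quantity depending only on $\Phi_1,\Phi_2$, so it can be absorbed by choosing $\eps_q$ small enough that $0.99C_{\stopt} + (\text{const})\sqrt{2}\,\eps^{a/4} < C_{\stopt}$. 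The threshold $\eps_q$ then depends on $C_{\stopt}$, which in turn was chosen (inside $\mathscr{P}(T,q)$) as a function of $q$; one must also take $\eps_q$ below the thresholds $\hat\eps_\delta$, $\eps_{a,C_{\stopt}}$, $\eps_{a,C_{\stopt},1}$, $\eps_{a,C_{\stopt},2}$ appearing in Propositions \ref{prop:add:supUpsT}, \ref{prop:add:stabnorm} and \ref{prop:add:alhaissmall}, so that all invoked estimates are simultaneously valid. Everything else is a union bound.
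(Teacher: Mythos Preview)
Your proposal is correct and follows essentially the same argument as the paper's proof. The paper organizes it as a three-way decomposition of $H^\eps$ via $\Omega = (E^\eps\cap\{\stopteps\le T\})\cup(E^\eps\cap\{\stopteps>T\})\cup(\Omega\setminus E^\eps)$ and shows $E^\eps\cap\{\stopteps\le T\}\subset\{\sup_{[0,T\wedge\stopteps]}\alpha^\eps\ge\tfrac12(0.01C_{\stopt})^2\}$, which is the contrapositive of your bootstrap inclusion $E^\eps\cap A^\eps\subseteq\{\stopteps>T\}$; your packaging is slightly cleaner and indeed gives $q+(1-p_\eps)$ for $\mbbP[H^\eps]$, while the paper's case split double-counts to $q+2(1-p_\eps)$.
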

Proof is given in appendix \ref{apsec:prf:prop:add:helpinprobconvg}.


Note that we have extended our results on $[0,T\wedge \stopteps]$ to $[0,T]$ by leveraging a small probability $q$, provided that $\zedh$ system possess property $\mathscr{P}(T,q)$.
Now we discuss under what conditions does $\zedh$ system possesses the property $\mathscr{P}(T,q)$ for arbitrary $q>0$.


Fix $T>0$. In general one cannot expect $\mathscr{P}(T,q)$ to hold for arbitrary $q>0$---for example, if the cubic nonlinearities have a destabilizing effect then there is a non-zero probability that trajectories blow-up in finite time. 
Similar situation arises in stochastic partial differential equations---see remark 5.2 in \cite{Blom_amp_eq}. When cubic nonlinearities have stabilizing effect, it is reasonable to expect $\mathscr{P}(T,q)$ to hold for arbitrary $q>0$ (see proposition \ref{lem:add:propertyPTq_stable} below).

The following two propositions help in checking if the property $\mathscr{P}(T,q)$ is satisfied. Proofs of them are similar in nature to the proof of Theorem 5.1 in \cite{Blom_amp_eq}.  \cite{Blom_amp_eq} deals with stochastic partial differential equations and the instability scenario there is different---analogous situation in delay equations case would be that ``one root of the characteristic equation is zero, and all other roots have negative real parts''. For the scenario that we are considering in this paper, one pair of roots lie on the imaginary axis, and so there are oscillations in the system and the proofs requires a bit more work than that in \cite{Blom_amp_eq}.

Proposition \ref{lem:add:propertyPTq} does not assume anything about the nature of the nonlinearity $G$---consequently its result is weak. Proposition \ref{lem:add:propertyPTq_stable} assumes that the nonlinearity is stabilizing and concludes that $\zedh$ possesses the property $\mathscr{P}(T,q)$ for any $q>0$.

\begin{prop}\label{lem:add:propertyPTq}
Fix $q>0$. Then $\exists T_q>0$ such that the $\zedh$ system  possesses the property $\mathscr{P}(T,q)$ for $T\in [0,T_q]$.
\end{prop}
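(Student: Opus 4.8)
The plan is to obtain a short-time bound on $\sup_{t\in[0,T]}\|\Phi e^{tB/\eps^2}\zedh_t\|$ that holds with probability at least $1-q$, by splitting $\zedh$ into a purely stochastic part (which is independent of the nonlinearity and can be controlled on any fixed interval) and a drift-driven part (which is negligible on a sufficiently short interval $[0,T_q]$, regardless of the sign or structure of $G$). Since $\zedh$ satisfies
$$d\zedh_t=e^{-tB/\eps^2}\Psiz G(\Phi e^{tB/\eps^2}\zedh_t+\ydec^\eps_t)\,dt+e^{-tB/\eps^2}\Psiz\,\sigma\,dW_t,\qquad \zedh_0=z^\eps_0,$$
I would write $\zedh_t=z^\eps_0+N^\eps_t+D^\eps_t$ where $N^\eps_t=\int_0^t e^{-uB/\eps^2}\Psiz\,\sigma\,dW_u$ is the martingale part and $D^\eps_t$ is the drift integral. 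Because $e^{-uB/\eps^2}$ is orthogonal, the quadratic variation of each component of $N^\eps$ grows linearly in $t$ with a constant independent of $\eps$; so by the Burkholder--Davis--Gundy (or Doob) inequality there is a constant $C_N$ with $\expt\sup_{t\in[0,T]}\|N^\eps_t\|_2^2\le C_N T$, uniformly in $\eps$. Likewise $\|z^\eps_0\|_2=\|\pi\xi\|$ is a fixed constant since $\xi$ is deterministic, and $\|\ydec^\eps_t\|\le K\|(1-\pi)\xi\|$ by \eqref{eq:ydecexpfastdec}.

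Next I would set up a stopping-time/continuity argument for the drift. Let $\tau=\inf\{t:\|\zedh_t\|_2\ge R\}$ for a radius $R$ to be fixed below; on $[0,t\wedge\tau]$ the argument of $G$ has norm at most (a constant times) $R$ plus the bounded quantity $\|\ydec^\eps\|$, so, using the explicit form of $G$ in \eqref{eq:considerMAINadd} (linear plus cubic, with bounded total-variation kernels $\nu_1,\nu_3$), $|G|$ is bounded by some $\rho(R)$ depending only on $R$ (a cubic polynomial in $R$). Since $\|e^{-uB/\eps^2}\Psiz\|_2=\|\Psiz\|_2$, this gives $\|D^\eps_{t\wedge\tau}\|_2\le \|\Psiz\|_2\,\rho(R)\,t$. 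Now choose $R$ so that $\|z^\eps_0\|_2+$ the $N^\eps$-bound stays comfortably below $0.99 C_{\stopt}/\|\Phi\|$ with probability $\ge 1-q$ (by Chebyshev on $\sup\|N^\eps\|_2^2$, this fixes $C_N T/(\text{gap})^2\le q$, hence a constraint on $T$), and then pick $T_q$ so small that $\|\Psiz\|_2\,\rho(R)\,T_q$ is also below the remaining slack. A standard bootstrap then shows $\tau>T$ on the high-probability event, so $\sup_{t\in[0,T]}\|\Phi e^{tB/\eps^2}\zedh_t\|=\sup_t\|\Phi\zedh_t\|\le \|\Phi\|_{op}\,R<0.99C_{\stopt}$, which is exactly the event $E^\eps$ in \eqref{assonzedh_setEeps}. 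One then sets $C_{\stopt}$ (any value compatible with the gap inequality works — $\mathscr{P}(T,q)$ only asks for existence of some $C_{\stopt}$) and the corresponding $\eps_*$ (here $\eps$ plays essentially no role beyond the orthogonality of $e^{-uB/\eps^2}$, so $\eps_*$ can be taken arbitrary, or $=1$).

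The main obstacle I anticipate is making the two smallness requirements compatible: the Chebyshev step forces $T$ small relative to $q$ and the chosen gap, while the drift step forces $T_q$ small relative to $\rho(R)$, and $R$ itself is determined by the gap and $C_N T$. One must order the choices carefully — first fix the gap (equivalently fix $C_{\stopt}$ and the target fraction $0.99$), then choose $T$ small enough that $C_N T$ is less than, say, a quarter of the gap squared times $q$, which pins down $R$; then choose $T_q\le T$ small enough that the drift contribution $\|\Psiz\|_2\rho(R)T_q$ eats less than another quarter of the gap. A secondary technical point is handling the cubic term in $G$ inside the stopping-time estimate cleanly (so that $\rho(R)$ is genuinely finite and explicit); this is routine given $\nu_1,\nu_3$ are bounded with finite variation, but it is where the non-Lipschitz nature of $G$ shows up and must be absorbed by the stopping time at level $R$ rather than by a global Lipschitz bound. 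Everything else is a direct application of BDG/Doob and Gronwall-type continuity arguments already invoked elsewhere in the paper.
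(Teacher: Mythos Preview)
Your proposal is correct and follows essentially the same route as the paper: split $\zedh$ into the initial point, the martingale $\mfw_t=\int_0^t e^{-sB/\eps^2}\Psiz\,\sigma\,dW_s$ (your $N^\eps$), and the drift integral; control the martingale uniformly in $\eps$ via BDG/Markov (using that $e^{-sB/\eps^2}$ is orthogonal); and run a bootstrap using the cubic bound $|G(\eta)|\lesssim\|\eta\|+\|\eta\|^3$ to keep $\|\zedh\|$ below a fixed radius on a short enough interval. The only cosmetic difference is the ordering of constants: the paper first picks the radius $C_a$ large depending on $q$, which simultaneously drives the martingale-exceedance probability $f(C_a)$ below $q$ and fixes $T_q=T_{C_a}\sim C_a^{-2}$, whereas you fix $C_{\stopt}$ (hence $R$) first and then shrink $T$ to meet both the Chebyshev and drift constraints --- both orderings close, and indeed $\eps$ plays no essential role, so $\eps_*$ is unconstrained exactly as you note.
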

Proof is given in appendix \ref{appsec:lem:add:propertyPTq}.

\begin{prop}\label{lem:add:propertyPTq_stable}
Fix $T>0$. Assume the cubic nonlinearity of $G$ is stabilizing, i.e., $\exists C_G>0$ such that
\begin{align}\label{lem:add:propertyPTq_stable_condition}
\frac{\om_c}{2\pi}\int_0^{2\pi/\om_c}\left((e^{tB}z)^*\Psiz\int_{-r}^0(\Phi(\theta)e^{tB}z)^3d\nu_3(\theta)\right)dt \,\,<\,\,-C_G||z||_2^4, \qquad \forall z\in \R^2.
\end{align}
Then the $\zedh$ system possesses the property $\mathscr{P}(T,q)$ for arbitrary $q>0$.
\end{prop}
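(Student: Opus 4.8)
The plan is to construct, for the fast-rotation-removed process $\zedh$ governed by
$d\zedh_t=e^{-tB/\eps^2}\Psiz G(\Phi e^{tB/\eps^2}\zedh_t+\ydec^\eps_t)\,dt+e^{-tB/\eps^2}\Psiz\sigma\,dW_t$,
a Lyapunov-type control of the quantity $V^\eps_t:=\tfrac12\|\zedh_t\|_2^2=\tfrac12\|e^{tB/\eps^2}\zedh_t\|_2^2$ (the equality because $e^{tB/\eps^2}$ is orthogonal), and then to use the stabilizing hypothesis \eqref{lem:add:propertyPTq_stable_condition} after averaging over one rotation period to beat the (lower-order, hence eventually subdominant) contributions of the linear part of $G$, the initial-condition remainder $\ydec^\eps$, and the noise. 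First I would apply Itô's formula to $V^\eps_t$: writing $w_t:=e^{tB/\eps^2}\zedh_t$ (so $\|w_t\|_2=\|\zedh_t\|_2$ and $w$ satisfies $dw_t=\eps^{-2}Bw_t\,dt+\Psiz G(\Phi w_t+\ydec^\eps_t)\,dt+\Psiz\sigma\,dW_t$), one gets, since $w^*Bw=0$,
$dV^\eps_t=\big(w_t^*\Psiz\,G(\Phi w_t+\ydec^\eps_t)+\tfrac12\|\Psiz\|_2^2\sigma^2\big)\,dt+\sigma\,w_t^*\Psiz\,dW_t$.
Decompose $G(\Phi w_t+\ydec^\eps_t)=\int_{-r}^0(\Phi(\theta)w_t+\ydec^\eps_t(\theta))\,d\nu_1(\theta)+\int_{-r}^0(\Phi(\theta)w_t+\ydec^\eps_t(\theta))^3\,d\nu_3(\theta)$ and expand the cube. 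The cubic-in-$w$ term is the only one that is quartic in $\|w_t\|_2$; all cross terms involving $\ydec^\eps_t$ are of lower order in $\|w_t\|_2$ and, crucially, carry a factor $\|\ydec^\eps_t\|\le Ke^{-\kappa t/\eps^2}\|(1-\pi)\pjeps_0\Xeps\|$, which is exponentially small in $\eps$ uniformly for $t$ bounded away from $0$ and is integrable in $t$ with $\int_0^\infty\|\ydec^\eps_t\|\,dt=O(\eps^2)$; the linear-$\nu_1$ term is at most quadratic in $\|w_t\|_2$; and the Itô-correction term is a constant.

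Next I would handle the fast oscillation in the quartic term. The integrand of the leading cubic contribution is $w_t^*\Psiz\int_{-r}^0(\Phi(\theta)w_t)^3\,d\nu_3(\theta)$, which in the slow variable $\zedh$ reads $(e^{tB/\eps^2}\zedh_t)^*\Psiz\int_{-r}^0(\Phi(\theta)e^{tB/\eps^2}\zedh_t)^3\,d\nu_3(\theta)$. Freezing $\zedh_t=z$ and averaging $e^{tB/\eps^2}z$ over a period $2\pi/\om_c$ of the (fast) rotation is exactly the left side of \eqref{lem:add:propertyPTq_stable_condition}, which by hypothesis is $<-C_G\|z\|_2^4=-4C_G V^2$. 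The standard averaging argument (as in the proof of Theorem 5.1 of \cite{Blom_amp_eq}) is to split $[0,t]$ into blocks of length $2\pi\eps^2/\om_c$, replace the fast average on each block by the period average with an error controlled by the modulus of continuity of $\zedh$ over a time-$O(\eps^2)$ window (which is $O(\eps)$ in the relevant moments, by the same Fischer--Nappo-style estimates already invoked in Section \ref{sec:mulnoi}, applied to the Itô process $\zedh$ whose coefficients are bounded on the event $\{V^\eps\le$ something$\}$), plus an $O(\eps^2)$ boundary error. Thus up to a stopping time one obtains the comparison
$dV^\eps_t\ \le\ \big(-4C_G (V^\eps_t)^2+c_1 V^\eps_t+c_2+\rho^\eps_t\big)\,dt+\sigma\,w_t^*\Psiz\,dW_t$,
where $c_1,c_2$ are $\eps$-independent constants (from $\nu_1$ and the Itô term) and $\rho^\eps_t$ is an error that is small in $L^p$, uniformly on $[0,T]$, as $\eps\to0$ (it collects the averaging errors and the exponentially small $\ydec^\eps$ contributions). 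The supermartingale structure then gives, via Itô's formula applied to $V^\eps$ or to $\log(1+V^\eps)$ together with Doob/Gronwall, a uniform-in-$\eps$ bound $\expt\sup_{t\in[0,T]}(V^\eps_t)^m\le K_m$ for every $m$: the $-4C_G(V^\eps)^2$ drift dominates the quadratic-variation growth $\sigma^2\|\Psiz\|_2^2 V^\eps$ of the martingale part once $V^\eps$ is large, so moments cannot escape. Choosing $C_\stopt$ large enough that, say, $0.99^2C_\stopt^2/(2\|\Phi\|^2)>$ (twice) this bound, Chebyshev gives $\mbbP[\sup_{t\in[0,T]}\|\Phi e^{tB/\eps^2}\zedh_t\|<0.99C_\stopt]\ge1-q$ for all $\eps$ below some $\eps_*$, i.e. $\mathscr{P}(T,q)$; and since $\|\Phi e^{tB/\eps^2}\zedh_t\|\le\|\Phi\|\,\|\zedh_t\|_2=\|\Phi\|\sqrt{2V^\eps_t}$ the whole argument is in terms of $V^\eps$.

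The main obstacle I anticipate is making the averaging step rigorous while simultaneously keeping the estimates uniform in $\eps$: one needs an \emph{a priori} moment bound on $\zedh$ (to bound the coefficients and the modulus-of-continuity errors) in order to run the averaging, but the moment bound is itself the conclusion being sought. The standard remedy — which I would follow — is a bootstrap/stopping-time argument: introduce $\tau_R^\eps:=\inf\{t:V^\eps_t\ge R\}$, prove the comparison inequality and hence a moment bound valid on $[0,T\wedge\tau_R^\eps]$ with constants \emph{independent of $R$} (this is where the coercive $-4C_G(V^\eps)^2$ term is essential — it gives a bound not depending on how large $V^\eps$ was allowed to get), and then let $R\to\infty$ to conclude $\tau_R^\eps\to\infty$ and the unconditional bound. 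A secondary technical point is that the boundary blocks in the period-decomposition of $[0,t]$ for variable $t$ must be handled so that the error is uniform in $t\in[0,T]$ inside the $\sup$; this is routine given the $L^p$ modulus-of-continuity estimate but must be stated carefully. Everything else — the Itô expansion, separating orders in $\|w_t\|_2$, bounding the $\ydec^\eps$ terms by their exponential decay, and the final Chebyshev step — is straightforward bookkeeping of the kind already carried out in Sections \ref{sec:mulnoi} and \ref{sec:addnoi}.
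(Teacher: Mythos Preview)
Your approach is a valid alternative but differs from the paper's in two structural choices. First, the paper does \emph{not} apply It\^o to $V^\eps=\tfrac12\|\zedh\|_2^2$ directly; instead it subtracts the stochastic integral at the level of $\zedh$, writing $\zpw_t:=\zedh_t-\mfw_t$ with $\mfw_t:=\int_0^t e^{-sB/\eps^2}\Psiz\sigma\,dW_s$, so that $\zpw$ satisfies a pathwise ODE. The argument then conditions on the event $\{\sup_{[0,T]}\|\mfw_t\|_1^4<R\}$ and proceeds entirely deterministically on that event, with a single Markov/BDG estimate on $\mfw$ at the very end. This avoids carrying a martingale term through the comparison argument. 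Second, for the averaging the paper uses the corrector trick rather than block-splitting: it sets $\remG(z,t):=2\int_0^t\tilde G_3(z,s)\,ds$ (bounded because $\tilde G_3$ is mean-zero periodic), differentiates $\eps^2\remG(\zpw_t,t/\eps^2)$ in $t$, and combines this with a stopping time $\tau^\eps=\inf\{t:\|\zpw_t\|_2\ge\eps^{-1/6}\}$ to obtain an explicit averaging error of order $\eps$, then closes by showing the resulting bound on $\|\zpw\|_2$ is below $\eps^{-1/6}$ so that $\tau^\eps>T$.

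Your bootstrap is the right idea, but the phrase ``constants independent of $R$\ldots\ then let $R\to\infty$'' glosses over a real dependence: on $[0,T\wedge\tau_R^\eps]$ the cubic drift makes the Fischer--Nappo constants scale polynomially in $R$, so your averaging error $\rho^\eps$ grows with $R$; and the quadratic variation of your martingale term is bounded only by $CRT$. What actually works is to fix a level $K$ for the martingale, observe that on $\{\sup|M|\le K\}$ the coercive $-4C_G V^2$ gives a bound $V\le C_1(K)$ \emph{independent of $R$}, choose $R>C_1(K)$ so $\tau_R^\eps>T$ on that event, and then choose $K$ (hence $R$, hence $\eps_*$) to make the complementary probability below $q$. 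You do not let $R\to\infty$. The paper's pathwise reduction sidesteps this interplay entirely by isolating all the randomness in $\mfw$ from the start, which is why its argument is shorter.
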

Proof is given in appendix \ref{appsec:lem:add:propertyPTq_stableNony}.

\vspace{12pt}

Now consider the system
$$d\,\zedt_t=e^{-tB/\eps^2}\Psiz G(\Phi e^{tB/\eps^2}\zedt_t)dt+e^{-tB/\eps^2}\Psiz L_1(\Phi e^{tB/\eps^2} \zedt_t) dW, \qquad \zedt_0=z^\eps_0,$$
i.e. we are totally ignoring the $Q$ part---even the effect $\ydec$ of the initial condition. Define $$\beta^\eps_t=\frac12||\zedt_t-\zedh_t||_2^2.$$
\begin{prop}\label{prop:betaepsSmallAddNoiseFin}
Assume the cubic nonlinearity is such that \eqref{lem:add:propertyPTq_stable_condition} is satisfied, i.e. nonlienarity is stabilizing. Fix $T>0$. Given any $q>0$, $\exists\,C>0$ and $\eps_{\circ}>0$ such that $\forall \eps<\eps_{\circ}$
$$\mbbP\left[\sup_{t\in [0,T]}\beta^\eps_t \geq C\eps^4\right]\leq q.$$
\end{prop}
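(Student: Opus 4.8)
The plan is to exploit the stabilizing hypothesis only through the already-established property $\mathscr{P}(T,q)$, which holds for every $q>0$ by Proposition~\ref{lem:add:propertyPTq_stable}, and then to run a purely pathwise Gronwall argument. The crucial structural point is that in the additive case $\zedt$ and $\zedh$ carry the \emph{same} diffusion coefficient $e^{-tB/\eps^2}\Psiz\sigma$, so $\zedt_t-\zedh_t$ solves a pathwise ODE with no martingale part; hence $\beta^\eps_t=\tfrac12\|\zedt_t-\zedh_t\|_2^2$ is absolutely continuous in $t$, $\beta^\eps_0=0$ because $\zedt_0=\zedh_0=z^\eps_0$, and
\[
\frac{d}{dt}\beta^\eps_t\;=\;\la\zedt_t-\zedh_t,\,e^{-tB/\eps^2}\Psiz\ra\,\Big(G(\Phi e^{tB/\eps^2}\zedt_t)-G(\Phi e^{tB/\eps^2}\zedh_t+\ydec^\eps_t)\Big).
\]
The whole point is to run Gronwall on the \emph{norm} $\rho^\eps_t:=\sqrt{2\beta^\eps_t}=\|\zedt_t-\zedh_t\|_2$ rather than on $\beta^\eps_t$ itself. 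Indeed $\ydec^\eps_t=\Th(t/\eps^2)(1-\pi)\xi$ is deterministic and, by \eqref{eq:ydecexpfastdec} and the fact that $\pjeps_0\Xeps=\xi$, satisfies $\int_0^T\|\ydec^\eps_s\|\,ds\leq\frac{K}{\kappa}\|(1-\pi)\xi\|\,\eps^2=\Ord(\eps^2)$, so one expects $\rho^\eps=\Ord(\eps^2)$, hence $\beta^\eps=\Ord(\eps^4)$. This parallels the treatment of $\alpha^\eps$ in Proposition~\ref{prop:add:alhaissmall}, but is cleaner because here the perturbation $\ydec^\eps$ is deterministic and integrably small, and no quadratic-variation term ever appears.

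First I fix $q>0$ and invoke Proposition~\ref{lem:add:propertyPTq_stable} to obtain $C_{\stopt},\eps_*>0$ with $\mbbP[E^\eps]\geq1-q$ for $\eps<\eps_*$, where $E^\eps=\{\sup_{t\in[0,T]}\|\Phi e^{tB/\eps^2}\zedh_t\|<0.99C_{\stopt}\}$ as in Definition~\ref{def:propPTqDeff}. Since $\Phi_1,\Phi_2$ are linearly independent on $[-r,0]$, the map $\Phi:\R^2\to P$ is injective, so there is $c_\Phi>0$ with $\|\Phi w\|\geq c_\Phi\|w\|_2$ for all $w$; combined with $\|e^{tB/\eps^2}w\|_2=\|w\|_2$ this yields, on $E^\eps$, the bound $\sup_{t\in[0,T]}\|\zedh_t\|_2\leq R:=0.99\,c_\Phi^{-1}C_{\stopt}$. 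All remaining work is carried out on $E^\eps$.

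Next, since $G$ is cubic I need an a priori bound on $\zedt$ as well, obtained by a self-improving stopping time. Put $\tau^\eps:=\inf\{t\geq0:\rho^\eps_t\geq1\}$ and $K_\xi:=K\|(1-\pi)\xi\|$, so $\sup_{t\geq0}\|\ydec^\eps_t\|\leq K_\xi$. On $E^\eps\cap[0,T\wedge\tau^\eps]$ we have $\|\zedt_t\|_2\leq R+1$, hence, writing $\eta_1=\Phi e^{tB/\eps^2}\zedt_t$ and $\eta_2=\Phi e^{tB/\eps^2}\zedh_t+\ydec^\eps_t$, also $\|\eta_1\|,\|\eta_2\|\leq M:=R+1+K_\xi$. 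Using $\eta_1-\eta_2=\Phi e^{tB/\eps^2}(\zedt_t-\zedh_t)-\ydec^\eps_t$, so that $\|\eta_1-\eta_2\|\leq\rho^\eps_t+\|\ydec^\eps_t\|$, together with $a^3-b^3=(a-b)(a^2+ab+b^2)$ and the explicit form of $G$ in \eqref{eq:considerMAINadd}, gives $|G(\eta_1)-G(\eta_2)|\leq\widehat C(\rho^\eps_t+\|\ydec^\eps_t\|)$ with $\widehat C$ depending only on $M$ and on the variations of $\nu_1,\nu_3$ (hence ultimately on $q$, through $C_{\stopt}$). Combined with $|\la\zedt_t-\zedh_t,e^{-tB/\eps^2}\Psiz\ra|\leq\|\Psiz\|_2\,\rho^\eps_t$, this gives $|\tfrac{d}{dt}\beta^\eps_t|\leq c_0\,\rho^\eps_t(\rho^\eps_t+\|\ydec^\eps_t\|)$ with $c_0=\|\Psiz\|_2\widehat C$; regularising $\rho^\eps_t$ as $\sqrt{2\beta^\eps_t+\delta^2}$ and letting $\delta\to0$, the elementary pathwise Gronwall for $\rho^\eps$ yields
\[
\rho^\eps_t\;\leq\;c_0\,e^{c_0T}\int_0^T\|\ydec^\eps_s\|\,ds\;\leq\;C_0\eps^2\qquad\text{on }E^\eps\cap[0,T\wedge\tau^\eps],
\]
with $C_0=C_0(q)$. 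Finally, choose $\eps_\circ\leq\eps_*$ with $C_0\eps_\circ^2<1$: for $\eps<\eps_\circ$, on $E^\eps$ we cannot have $\tau^\eps\leq T$, since by continuity $\rho^\eps_{\tau^\eps}=1$ would contradict $\rho^\eps_{\tau^\eps}\leq C_0\eps^2<1$. Thus $\tau^\eps>T$ on $E^\eps$, so $\sup_{t\in[0,T]}\beta^\eps_t\leq\tfrac12C_0^2\eps^4=:C\eps^4$ on $E^\eps$, and therefore $\mbbP[\sup_{t\in[0,T]}\beta^\eps_t\geq C\eps^4]\leq\mbbP[(E^\eps)^c]\leq q$, which is the claim.

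The only genuinely delicate point, and the expected main obstacle, is the interplay between the non-Lipschitz cubic nonlinearity and the a priori control of $\zedt$: one must secure boundedness of $\zedh$ probabilistically via the stabilizing hypothesis, transfer it to $\zedt$ through the bootstrap above, and keep track that the threshold defining $\tau^\eps$ is a fixed constant while the resulting Gronwall bound is $\Ord(\eps^2)\to0$, so the stopping time is eventually inactive. Everything else---the cubic Lipschitz-type estimate for $G$, the norm equivalence on the two-dimensional space $P$, the exponential decay of $\ydec^\eps$, and the pathwise Gronwall---is routine. The reason the present bound is $\Ord(\eps^4)$, rather than merely the $\Ord(\eps^{a/2})$ obtained for $\alpha^\eps$, is precisely that $\ydec^\eps$ is deterministic and integrably $\Ord(\eps^2)$, and that the common noise term drops out of $\zedt-\zedh$.
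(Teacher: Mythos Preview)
Your proof is correct and follows essentially the same approach as the paper's own proof: both exploit the absence of a martingale term in $d\beta^\eps_t$, invoke the property $\mathscr{P}(T,q)$ from Proposition~\ref{lem:add:propertyPTq_stable} to bound $\zedh$ with probability at least $1-q$, introduce a stopping time based on $\sqrt{\beta^\eps_t}$ reaching a fixed threshold to tame the cubic nonlinearity, and then run a pathwise Gronwall on $\sqrt{\beta^\eps}$ using that $\int_0^T\|\ydec^\eps_s\|\,ds=\Ord(\eps^2)$. Your write-up is in fact slightly more careful in two places---you explicitly regularise $\sqrt{2\beta^\eps_t}$ to handle the point where it vanishes, and you spell out the norm equivalence $\|\Phi w\|\geq c_\Phi\|w\|_2$ on the two-dimensional space $P$---but these are refinements of the same argument, not a different route.
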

Proof is in appendix \ref{sec:proofofprop_betaepsSmallAddNoiseFin}.

Comibining theorem \ref{prop:helpinprobconvg} and proposition \ref{prop:betaepsSmallAddNoiseFin} we get the following result.
\begin{thm}\label{thm:cubnonstab_probconvg_apprx}
Assume the cubic nonlinearity is such that \eqref{lem:add:propertyPTq_stable_condition} is satisfied, i.e. nonlinearity is stabilizing. Fix any $a\in [0,1)$. For any $q>0$, $\exists \eps_q>0$ such that $\forall \eps <\eps_q$
\begin{align*}
\mbbP\bigg[\sup_{s\in [0,T]}\big|\Xeps_s- &\left(\Phi(0)e^{sB/\eps^2}\zedt_s+\ydec^\eps_s\right)\big|> 6\eps^{a/4}\bigg] \,< \,  3q + 4(1-p_\eps)
\end{align*}
where $p_{\eps}\to 1$ as $\eps\to 0$ and is given explicitly  in \eqref{eq:pepsdefalp}.
\end{thm}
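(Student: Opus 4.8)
The theorem is an assembly of the estimates established above, so the plan is: (i) produce a pathwise bound for the error in terms of $\alpha^\eps$, $\beta^\eps$ and $||\stbp^\eps||$; (ii) control each of these three quantities in probability; (iii) union bound. For step (i), I would first write, for $t\in[0,T]$,
\[
\Xeps(t) \,=\, \hpex{t}(0) \,=\, \Phi(0)z^\eps_t + y^\eps_t(0) \,=\, \Phi(0)e^{tB/\eps^2}\zed_t + \ydec^\eps_t(0) + \stbp^\eps_t(0),
\]
using $\hpex{t}=\Phi z^\eps_t + y^\eps_t$, the relation $z^\eps_t=e^{tB/\eps^2}\zed_t$, and $y^\eps_t=\ydec^\eps_t+\stbp^\eps_t$. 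Subtracting $\Phi(0)e^{tB/\eps^2}\zedt_t+\ydec^\eps_t(0)$ (the $\ydec^\eps_t(0)$ terms cancel, cf. \eqref{eq:mul:finthem_zedtapprx}) and inserting $\pm\zedh_t$ gives
\[
\Xeps(t) - \big(\Phi(0)e^{tB/\eps^2}\zedt_t+\ydec^\eps_t(0)\big) \,=\, \Phi(0)e^{tB/\eps^2}(\zed_t-\zedh_t) + \Phi(0)e^{tB/\eps^2}(\zedh_t-\zedt_t) + \stbp^\eps_t(0).
\]
Since $\Phi(0)=[1,0]$ and $e^{tB/\eps^2}$ is a rotation (so $||e^{tB/\eps^2}v||_2=||v||_2$), one has $|\Phi(0)e^{tB/\eps^2}v|\leq ||v||_2$ for $v\in\R^2$, and $|\stbp^\eps_t(0)|\leq ||\stbp^\eps_t||$. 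With $\alpha^\eps_t=\tfrac12||\zed_t-\zedh_t||_2^2$ and $\beta^\eps_t=\tfrac12||\zedt_t-\zedh_t||_2^2$ this yields the pathwise bound
\[
\sup_{t\in[0,T]}\big|\Xeps(t) - \big(\Phi(0)e^{tB/\eps^2}\zedt_t+\ydec^\eps_t(0)\big)\big| \,\leq\, \sqrt{2\sup_{[0,T]}\alpha^\eps} + \sqrt{2\sup_{[0,T]}\beta^\eps} + \sup_{[0,T]}||\stbp^\eps||.
\]

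For step (ii), since the cubic nonlinearity is stabilizing, Proposition \ref{lem:add:propertyPTq_stable} guarantees that the $\zedh$ system possesses property $\mathscr{P}(T,q)$ for the given $q>0$. Hence Theorem \ref{prop:helpinprobconvg} applies: after fixing some admissible $\delta\in(2a,2)$ so that $p_\eps\to1$, there is $\eps_q^{(1)}>0$ with $\mbbP[H^\eps]<q+2(1-p_\eps)$ and $\mbbP[S^\eps]<q+2(1-p_\eps)$ for $\eps<\eps_q^{(1)}$, where $H^\eps=\{\sup_{[0,T]}\alpha^\eps\geq\eps^{a/2}\}$ and $S^\eps=\{\sup_{[0,T]}||\stbp^\eps||\geq8\eps^a\}$. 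Proposition \ref{prop:betaepsSmallAddNoiseFin} provides $C>0$ and $\eps_q^{(2)}>0$ with $\mbbP[B^\eps]\leq q$ for $\eps<\eps_q^{(2)}$, where $B^\eps=\{\sup_{[0,T]}\beta^\eps\geq C\eps^4\}$.

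For step (iii), on $(H^\eps\cup S^\eps\cup B^\eps)^c$ the pathwise bound above is strictly below $\sqrt2\,\eps^{a/4}+\sqrt{2C}\,\eps^2+8\eps^a$. Dividing by $\eps^{a/4}$, the last two terms become $\sqrt{2C}\,\eps^{2-a/4}+8\eps^{3a/4}$, which (for $a\in(0,1)$, so both exponents are strictly positive) tends to $0$; hence there is $\eps_q^{(3)}>0$ with $\sqrt{2C}\,\eps^2+8\eps^a\leq(6-\sqrt2)\eps^{a/4}$ for $\eps<\eps_q^{(3)}$, and thus the error is $\leq6\eps^{a/4}$ on that complement. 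Taking $\eps_q:=\min\{\eps_q^{(1)},\eps_q^{(2)},\eps_q^{(3)}\}$ (and implicitly below the thresholds built into the cited results, e.g. the $\Ord(\min\{C_\stopt^{-3/(2-a)},C_\stopt^{-3/2a}\})$-type bounds of Proposition \ref{prop:add:stabnorm}), for $\eps<\eps_q$ we get
\[
\mbbP\Big[\sup_{s\in[0,T]}\big|\Xeps_s-(\Phi(0)e^{sB/\eps^2}\zedt_s+\ydec^\eps_s(0))\big|>6\eps^{a/4}\Big] \,\leq\, \mbbP[H^\eps]+\mbbP[S^\eps]+\mbbP[B^\eps] \,<\, 3q+4(1-p_\eps),
\]
which is the claim.

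\textbf{Main obstacle.} No analytic difficulty remains — everything hard is already inside Propositions \ref{prop:add:stabnorm}, \ref{prop:add:alhaissmall}, \ref{lem:add:propertyPTq_stable}, \ref{prop:betaepsSmallAddNoiseFin} and Theorem \ref{prop:helpinprobconvg}. The only delicate part is the consistent choice of all the $\eps$-thresholds: one must pick $\delta\in(2a,2)$ for $p_\eps\to1$, pick $C_\stopt$ large enough that property $\mathscr{P}(T,q)$ holds, and only then shrink $\eps$ below all of $\hat\eps_\delta$, the $C_\stopt$-dependent bounds (which deteriorate as $C_\stopt\to\infty$), and $\eps_q^{(3)}$ simultaneously. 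One should also double-check the $a\downarrow0$ regime, where the exponent $3a/4\to0$ and the term $8\eps^{3a/4}$ no longer decays, so the numerical constant $6$ (and the statement) is really effective for $a\in(0,1)$.
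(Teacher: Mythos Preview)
Your proposal is correct and follows essentially the same route as the paper: decompose $\Xeps(t)$ via $\zed\to\zedh\to\zedt$, bound the three pieces by $\sqrt{2\alpha^\eps}$, $\sqrt{2\beta^\eps}$, $||\stbp^\eps||$, and then union-bound using Theorem~\ref{prop:helpinprobconvg} and Proposition~\ref{prop:betaepsSmallAddNoiseFin}. The paper splits the threshold $6\eps^{a/4}$ evenly into three pieces of $2\eps^{a/4}$ and bounds each event separately, whereas you work on the intersection of the three ``good'' events and then verify the sum sits below $6\eps^{a/4}$; these are equivalent bookkeeping choices.

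Your caveat about $a\downarrow 0$ is well taken: the inclusion $\{\sup||\stbp^\eps||>2\eps^{a/4}\}\subset S^\eps=\{\sup||\stbp^\eps||\geq 8\eps^a\}$ needs $2\eps^{a/4}\geq 8\eps^a$, i.e.\ $\eps^{-3a/4}\geq 4$, which fails for $a=0$ regardless of $\eps$. The paper's own proof has the same issue at $a=0$, so your observation is a genuine (if minor) refinement rather than a defect of your argument.
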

\begin{proof}
Using $\Xeps_s = \Phi(0)e^{sB/\eps^2}\zed_s+\stbp^\eps_s + \ydec^\eps_s$ the above probability is bounded by
\begin{align*}
\mbbP\bigg[\sup_{s\in [0,T]}& \big|\Phi(0)e^{sB/\eps^2}(\zed_s-\zedh_s)\big|  +\big|\Phi(0)e^{sB/\eps^2}(\zedh_s-\zedt_s)\big|+|\stbp^\eps_s(0)|>6\eps^{a/4}\bigg] \\
&\leq \,\mbbP\left[\sup_{s\in [0,T]}\sqrt{2\alpha^\eps_t}>2\eps^{a/4}\right]+\mbbP\left[\sup_{s\in [0,T]}\sqrt{2\beta^\eps_t}>2\eps^{a/4}\right]+\mbbP\left[\sup_{s\in [0,T]}||\stbp^\eps_s||>2\eps^{a/4}\right] \\
& < q+2(1-p_{\eps}) \,\,+\,\,q\,\,+\,\, q+2(1-p_{\eps}) .\qquad \quad (\text{for } \eps \text{ sufficiently small.})
\end{align*}
\end{proof}

\begin{rmk}
Note that $\zedt$ is a 2-dimensional system without delay and $\ydec^\eps$ is a deterministic process that has exponential decay. The above theorem shows that, for small enough $\eps$, the delay system $\Xeps$ can be approximated by the $\zedt$ system without delay, with probability close to 1.
\end{rmk}

When the nonlinearity is stabilizing, using standard averaging techniques for equations without delay  (see for example \cite{NavamSowers}), it can be shown that the distribution of $\zedt$ converges as $\eps\to 0$ to the distribution of a 2-dimensional process $\zedtz$. Theorem \ref{prop:helpinprobconvg} and  propositions \ref{prop:betaepsSmallAddNoiseFin} show that $\sup_{t\in [0,T]}\beta^\eps_t$ and $\sup_{t\in [0,T]}\alpha^\eps_t$ converge to zero in probability. Hence, by theorem 3.1 in \cite{Billingsley1999}, the distribution of $\zed$ converges as $\eps \to 0$ to the distribution of $\zedtz$. Also, the distribution of $\hprc^\eps$ process, where $\hprc^\eps_t:=\frac12||z^\eps_t||_2^2=\frac12||\zed_t||_2^2$, converges as $\eps \to 0$ to the distribution of $\hprc^0$, where $\hprc^0$ is the weak-limit as $\eps \to 0$ of the process $H^\eps_t=\frac12||\zedt_t||_2^2$.

Great simplification can be obtained when $\frac12||\zed||_2^2$ can be used to approximate the required quantities. For example, consider the exit time
\begin{align}
\tau^\eps&:=\inf \{t\geq 0: |\Xeps_t|\geq \sqrt{2H^*}\}
\end{align}
where $H^*$ is fixed and is such that $\sqrt{2H^*}\gg ||(I-\pi)\pjeps_0 \Xeps||$. Noting that $\Phi(0)e^{tB/\eps^2}\zed_{t}=(\zed_t)_1\cos(\om t/\eps^2)+(\zed_t)_2\sin(\om t/\eps^2)$; because of the fast oscillations of $\zed$ and fast decay of $\ydec^\eps$ and smallness of $\stbp^\eps$, the exit time $\tau^\eps$ would be very close to the exit time ${\tau'}^\eps$ where
\begin{align}
{\tau'}^\eps&:=\inf \{t\geq 0: \sqrt{(\zed_t)_1^2+(\zed_t)_2^2}\geq \sqrt{2H^*}\}.
\end{align}
To approximate the distribution of $\tau^\eps$, one can study $H^\eps_t:=\frac12||\zedt_t||_2^2$ and consider the distribution of 
\begin{align*}
\tau^{\eps,\hbar}&:=\inf \{t\geq 0: H^\eps_t\geq H^*\}.
\end{align*}
The distribution of $\tau^{\eps,\hbar}$ would be close to that of $\tau^\eps$. Since $\zedt_t$ does not involve any delay, standard averaging techniques can be used to show that  the distribution of $H^\eps$ converges as $\eps\to 0$ to the distribution of a specific 1-dimensional process $\hprc^0$ (without delay). Then the exit times
\begin{align*}
\tau^{\hbar}&:=\inf \{t\geq 0: \hprc^0_t\geq H^*\}
\end{align*}
would closely approximate $\tau^\eps$. The advantages in doing so are: (i) $\hprc^0$ is a process without delay and hence easier to simulate (ii) numerical simulation of $\hprc^0$ can be done with a much coarser numerical mesh than that required for $\Xeps$.

\subsection{Example}\label{sec:example}

Consider the following equation:
\begin{align}\label{eq:examplesys_quadadded}
d\Xeps(t)=\eps^{-2}L_0(\pjeps_t \Xeps)dt + G(\pjeps_t\Xeps)dt + \sigma dW, \qquad \quad \pjeps_0\Xeps=\xi,
\end{align}
where $L_0\eta=-\frac{\pi}{2}\eta(-1)$ and $G(\eta)=\gamma_c\eta^3(-1)$. The characteristic equation $\lambda+\frac{\pi}{2}e^{-\lambda}=0$ has countably infinite roots on the complex plane. The roots with the largest real part are $\pm i\frac{\pi}{2}$. Hence $L_0$ satisfies the assumption \ref{ass:assumptondetsys}. The basis $\Phi$ for $P$ and the vector $\Psiz$ can be evaluated as
$$\Phi(\theta)=[\cos(\frac{\pi}{2}\theta)\,\,\,\sin(\frac{\pi}{2}\theta)], \qquad \quad \Psiz=\frac{2}{(1+(\pi/2)^2)}\left[\begin{array}{c}1 \\ \pi/2 \end{array}\right].$$
The corresponding $\zedt$ satisfies
\begin{align*}
d\zedt \,=\,\gamma_c e^{-tB/\eps^2}\Psiz\left(\Phi(-1)e^{tB/\eps^2}\zedt_t\right)^3dt +    e^{-tB/\eps^2}\Psiz\sigma dW, \qquad \quad \Phi \zedt_0=\pi\xi.
\end{align*}
Let $H^\eps_t=\frac12((\zedt_1)^2+(\zedt_2)^2)_t$. Applying Ito formula we have
\begin{align*}
dH^\eps_t\,&=\,\gamma_c(e^{-tB/\eps^2}\Psiz)^*\,\zedt (\Phi(-1)e^{tB/\eps^2}\zedt)^3\,dt + (e^{-tB/\eps^2}\Psiz)^*\,\zedt \sigma dW \\
& \qquad \qquad + \frac12\sigma^2\left((e^{-tB/\eps^2}\Psiz)_1^2+(e^{-tB/\eps^2}\Psiz)_2^2\right)dt.
\end{align*}
Averaging the fast oscillations we get that the probability distribution of $H^\eps$ converges as $\eps \to 0$ to the probability distribution of
\begin{align}\label{eq:examplesysavgd_quadadded}
d\hprc^0_t=\left(-\frac{3\gamma_c\pi/2}{1+(\pi/2)^2}(\hprc^0_t)^2 + \frac{2\sigma^2}{1+(\pi/2)^2}\right)dt + \frac{2}{\sqrt{1+(\pi/2)^2}}\sqrt{\hprc^0_t}\,\sigma dW.
\end{align}

Now we illustrate our results employing numerical simulations.

Draw a random sample of $N_{samp}$ particles with initial $H$ values $\{h_i\}_{i=1}^{Nsamp}$. Simulate them according to  \eqref{eq:examplesysavgd_quadadded} for $0\leq t \leq T_{end}$.

Simulate \eqref{eq:examplesys_quadadded} for $0 \leq t \leq T_{end}$ using initial trajectories $\{\sqrt{2h_i} \cos(\om_c\cdot)\}_{i=1}^{Nsamp}$. 

Let $\tau^\eps:=\inf \{t\geq 0: |\Xeps_t|\geq \sqrt{2H^*}\}$ and $\tau^h:=\inf \{t\geq 0: H^0(t)\geq H^*\}$

We can check whether the following pairs are close.
\begin{enumerate}
\item the distribution of $\frac12((z^\eps_{T_{end}})_1^2+(z^\eps_{T_{end}})_2^2)$ from \eqref{eq:examplesys_quadadded} \emph{and} the distribution of $\hprc^0_{T_{end}}$ from \eqref{eq:examplesysavgd_quadadded},
\item distribution of $\tau^\eps$  \emph{and} the distribution of $\tau^h$.
\end{enumerate}

We took $\eps=0.025$,  $H^*=1.5$, $T_{end}=2$, $N_{samp}=4000$, and $\sqrt{2\{h_i\}_{i=1}^{Nsamp}}=1.2$. Figures \ref{fig:tcdf} and \ref{fig:taucdf} answer the above questions. Two cases are considered with $\sigma=1$ fixed:  $\gamma_c=1$ and $\gamma_c=0$.

\begin{figure}
\centering
\begin{minipage}{.5\textwidth}
  \centering
\includegraphics[scale=0.45]{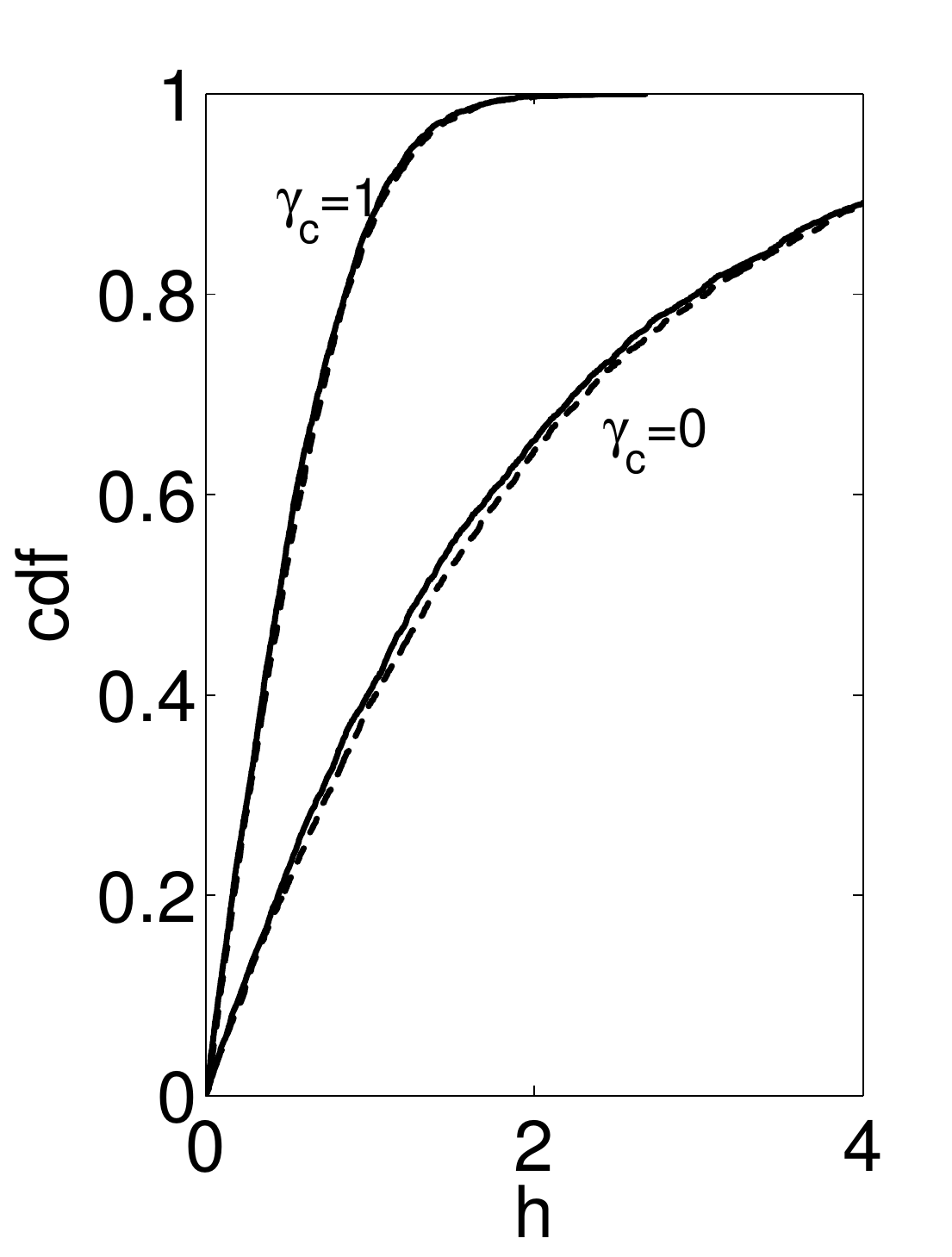}
  \caption{cdf of $\frac12((z^\eps_T)_1^2+(z^\eps_T)_2^2)$ (dashed line) and $\hprc^0_T$ (solid line).}
  \label{fig:tcdf}
\end{minipage}%
\begin{minipage}{.5\textwidth}
  \centering
  \includegraphics[scale=0.4]{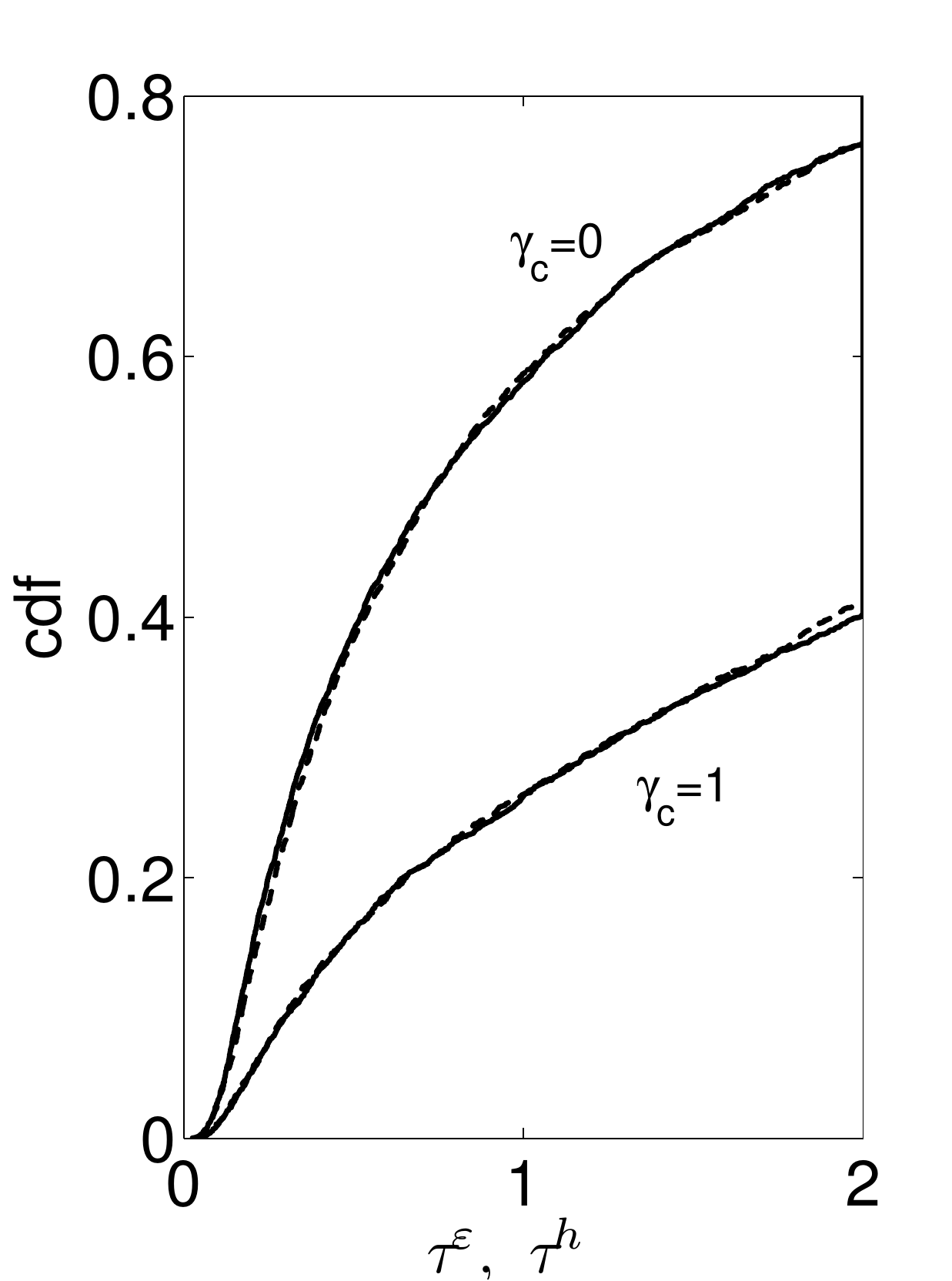}
  \caption{cdf of $\tau^\eps$ (dashed line) and cdf of $\tau^h$  (solid line). The cdf value at $\tau^\eps=2$ indicates the fraction of particles whose modulus exceeded $\sqrt{2H^*}$ before the time $T=2$.}
  \label{fig:taucdf}
\end{minipage}
\end{figure}

More examples (oscillators with cubic nonlinearity) are discussed in\footnote{\cite{LNPRLE} employs complex coordinates and so the form of answers would differ from this paper. However the numerical values would be same. For example, in this paper we write an element in $P$ as $z_1\cos(\om_c\cdot)+z_2\sin(\om_c\cdot)$ with $z_i \in \R$. But in \cite{LNPRLE} we write $z_1e^{i\om_c\cdot}+z_2e^{-i\om_c\cdot}$ with $z_1$ and $z_2$ being complex conjugates. For multidimensional systems as treated in \cite{LNPRLE} this complex coordinates is more convenient.} \cite{LNPRLE}.


\appendix

\section{Proofs of results in section \ref{sec:mulnoi}}

\subsection{Proof of Lemma \ref{lem:mul:quest:supuntilTisconst}}\label{apsec:prf:lem:mul:quest:supuntilTisconst}
For $\eta \in \C$, by $\eta(\theta)$ we mean $\eta$ evaluated at $\theta \in [-r,0]$. 

Let $\Xeps_{q,t}:=((1-\pi)\hpex{t})(0)$, and $\Xeps_{p,t}:=(\pi\hpex{t})(0).$
And for the unperturbed system \eqref{eq:detDDE}, let $x_{q,t}:=((1-\pi)\pj_tx)(0)$,  and $x_{p,t}:=(\pi\pj_tx)(0)$ with the initial condition $\pj_0x=\hpex{0}=\xi$. Let
$$Z_t:=\int_0^tL_1(\hpex{s}) d{W}_s.$$
 
In \eqref{eq:stabsoldefprob}, $\stabsol$ was defined. Let $$\critsol(t)=(\Th(t)\pih\Ind)(0).$$
Using $\pih \Ind =\Phi \Psiz$ from \eqref{eq:piIdeqPsiz} and $\Th(t)\Phi=\Phi e^{tB}$, we get $\critsol(t)=\Phi(0)e^{tB}\Psiz$.

Using the variation-of-constants formula \eqref{eq:vocformstateW_full}-\eqref{eq:vocformstateW} we have for $t\geq 0$ 
\begin{align*}
\Xeps_t=\Xeps_{p,t}+\Xeps_{q,t}=x_{p,t/\eps^2}+x_{q,t/\eps^2}+D_{p,t}+D_{q,t}+A_{p,t}+A_{q,t}
\end{align*}
with
\begin{align*}
D_{q,t}&:=\int_0^t\stabsol\left(\frac{t-s}{\eps^2}\right)G(\hpex{s}) ds, \qquad A_{q,t}:=\int_0^t\stabsol\left(\frac{t-s}{\eps^2}\right)dZ_s, \\
D_{p,t}&:=\int_0^t\critsol\left(\frac{t-s}{\eps^2}\right)G(\hpex{s}) ds, \qquad A_{p,t}:=\int_0^t\critsol\left(\frac{t-s}{\eps^2}\right)dZ_s.
\end{align*}

For any process $M$, we define $M^*_t:=\sup_{0\leq s\leq t}|M_s|$. Now, what we mean by $D^*_{q,t}$, $A^*_{q,t}$ and $x^*_{q,t}$ etc is clear. Also define
$$\mfXeps_{t}:=\sup_{s\in[0,t]}|\Xeps_{s}|^n.$$
We then have,
\begin{align}\label{eq:fischNapLemm_to_collect}
2^{-5(n-1)}\expt\,\mfXeps_{t}\,\,\quad \leq &\,\,\quad \expt\,|x^*_{p,t/\eps^2}|^n\,\,+\,\,\expt\,|x^*_{q,t/\eps^2}|^n\,\,
\\ &\qquad +\,\,\expt\,|D^*_{p,t}|^n\,\,+\,\,\expt\,|D^*_{q,t}|^n+\,\,\expt\,|A^*_{p,t}|^n+\,\,\expt\,|A^*_{q,t}|^n. \notag
\end{align}

First we focus on the terms involving the process $A$. Using integration by parts we have
\begin{align*}
A_{q,s}=\stabsol(0)Z_s+\int_0^s\eps^{-2}\stabsol'\left(\frac{s-u}{\eps^2}\right)Z_u\,du.
\end{align*}
Using Minkowski inequality,
\begin{align}\label{FmultGcubReq_eq:Dtuseful}
\expt\,|A^*_{q,t}|^n\,\,&\leq\,\,2^{n-1}|\stabsol(0)|^n\expt \sup_{s\in [0,t]}|Z_s|^n\,\,+\,\,2^{n-1}\expt\sup_{s\in [0,t]}\left|\int_0^s\eps^{-2}\stabsol'\left(\frac{s-u}{\eps^2}\right)Z_u\,du\right|^n.
\end{align}
The second term on the RHS of \eqref{FmultGcubReq_eq:Dtuseful} is bounded above (using the exponential decay \eqref{eq:expdecayforhprime}) by
\begin{align*}
2^{n-1}\expt\sup_{s\in [0,t]}\left|\int_0^s\eps^{-2}\left|\stabsol'\left(\frac{s-u}{\eps^2}\right)\right|\,\left|Z_u\right|\,du\right|^n\,\,&\leq\,\,2^{n-1}\expt\sup_{s\in [0,t]}\left|\int_0^s\eps^{-2}\widetilde{K}||L_0||e^{-\kappa(s-u)/\eps^2}\,\left|Z_u\right|\,du\right|^n\\
&\leq\,\,2^{n-1}(\widetilde{K}||L_0||/\kappa)^n\expt \sup_{s\in [0,t]}|Z_s|^n,
\end{align*}
where $\widetilde{K}=K||(1-\pih)\Ind||$.
Hence, {using Burkholder-Davis-Gundy inequality} and Holder inequality
\begin{align*}
\expt\,|A^*_{q,t}|^n\,\,&\leq\,\,2^{n-1}\left(|\stabsol(0)|^n+(\widetilde{K}||L_0||/\kappa)^n\right)\expt \sup_{s\in [0,t]}|Z_s|^n \\
&\leq\,\,2^{n-1}\left(|\stabsol(0)|^n+(\widetilde{K}||L_0||/\kappa)^n\right)C_m\expt \left(\int_0^tL_1^2(\hpex{u})du\right)^{n/2} \\
&\leq\,\,2^{n-1}\left(|\stabsol(0)|^n+(\widetilde{K}||L_0||/\kappa)^n\right)C_{m}t^{\frac{n-2}{2}}||L_1||^n\left(\int_0^t\expt\mfXeps_u du\,+\expt||\xi||^n(t\wedge \eps^2r)\right)\\
&=\,\,C_{m,L}t^{\frac{n-2}{2}}\left(\int_0^t\expt\mfXeps_u du\,+\expt||\xi||^n(t\wedge \eps^2r)\right)
\end{align*}
where $C_{m,L}=2^{n-1}\left(|\stabsol(0)|^n+(\widetilde{K}||L_0||/\kappa)^n\right)C_{m}||L_1||^n$ and $t\wedge \eps^2r$ means $\min\{t,\eps^2r\}$. 

Now we focus on $A_{p,t}$.
\begin{align*}
A_{p,t}\,\,&=\,\, \int_0^t\critsol\left(\frac{t-s}{\eps^2}\right)\,dZ_s \,\,=\,\,\int_0^t\left(\Phi(0)e^{(t-s)B/\eps^2}\Psiz\right)\,dZ_s\,\,=\,\,\Phi(0)e^{tB/\eps^2}\left(\int_0^te^{-sB/\eps^2}dZ_s\right)\Psiz.
\end{align*}
Let $M^c_t=\int_0^t\cos(\om_cs/\eps^2)dZ_s$ and $M^s_t=\int_0^t\sin(\om_cs/\eps^2)dZ_s$. Then
\begin{align*}
\expt |A^*_{p,t}|^n\,\,&\leq\,\, (||\Phi(0)||_2||\Psiz||_2)^n\expt(M^{c,*}_t+M^{s,*}_t)^n.
\end{align*} 
Using BDG and Holder inequalities,
\begin{align*}
\expt |A^*_{p,t}|^n\,\,&\leq\,\, 2^{n-1}(||\Phi(0)||_2||\Psiz||_2)^nCt^{\frac{n-2}{2}}||L_1||^n\left(\int_0^t\expt\mfXeps_u du\,+\expt||\xi||^n(t\wedge \eps^2r)\right).
\end{align*} 

Now we focus on the process $D$.  Using exponential decay of $\stabsol$ we have 
\begin{align*}
|D_{q,t}|\,\,&\leq\,\, \int_0^t\left|\stabsol\left(\frac{t-s}{\eps^2}\right)\right|\,|G(\hpex{s})| ds \\
&\leq\,\,\sup_{s\in[0,t]}|G(\hpex{s})|\int_0^t\widetilde{K}e^{-\kappa(t-s)/\eps^2}ds\,\,\leq\,\,\eps^2(\widetilde{K}/\kappa)\sup_{s\in[0,t]}|G(\hpex{s})|.
\end{align*}
Hence, using the Lipschitz condition $|G(\eta)|\leq K_G||\eta||$ we have,
\begin{align*}
\expt\,|D^*_{q,t}|^n\,\,\leq\,\,\eps^{2n}(\widetilde{K}K_G/\kappa)^n(\expt\mfXeps_{t}+\expt||\xi||^n).
\end{align*}
Now,
\begin{align*}
|D_{p,t}|\,\,&\leq\,\, \int_0^t\left|\critsol\left(\frac{t-s}{\eps^2}\right)\right|\,|G(\hpex{s})| ds \,\,=\,\,\int_0^t\left|\Phi(0)e^{(t-s)B/\eps^2}\Psiz\right|\,|G(\hpex{s})| ds\\
&\leq\,\,||\Phi(0)||_2||\Psiz||_2\int_0^t|G(\hpex{s})| ds \,\,\leq\,\,||\Phi(0)||_2||\Psiz||_2K_G\int_0^t||\hpex{s}|| ds.
\end{align*}
Hence, using BDG and Holder inequalities,
\begin{align*}\expt\,|D^*_{p,t}|^n\,\,\leq\,\,(||\Phi(0)||_2||\Psiz||_2K_G)^nt^{n-1}\left(\int_0^t\expt\mfXeps_{s} ds+\expt||\xi||^n(t\wedge\eps^2r)\right).\end{align*}

Now, we focus on the deterministic terms. Because of our assumption on $L_0$, there exists $C_{L_0}>0$ such that $x^*_{q,t/\eps^2} \leq \sqrt{C_{L_0}}||(1-\pi)\xi||e^{-\kappa t/\eps^2}$ and $x^*_{p,t/\eps^2}\leq \sqrt{C_{L_0}}||\pi\xi||$.

Collecting all the above results in \eqref{eq:fischNapLemm_to_collect}, we have for $n>2$, 
\begin{align*}
(2^{-5(n-1)}-\eps^{2n}(\widetilde{K}K_G/\kappa)^n)\expt\,\mfXeps_{t}\,\,\leq\,\,C_1+C_2\int_0^t\expt\mfXeps_{s} ds,
\end{align*}
where
\begin{align*}
C_1&=C_{L_0}^{n/2}(\expt||\pi\xi||^n+\expt||(1-\pi)\xi||^n)+\eps^{2n}(\widetilde{K}K_G/\kappa)^n\expt||\xi||^n\\
& \qquad \quad + (||\Phi(0)||_2||\Psiz||_2)^n(T^{n-1}K_G^n+2^{n-1}||L_1||^nCT^{(n-2)/2})\expt||\xi||^n\eps^2r,
\end{align*}
$$C_2=(||\Phi(0)||_2||\Psiz||_2K_G)^nT^{n-1}+C_{m,L}T^{(n-2)/2}+2^{n-1}||\Phi(0)||_2^n||\Psiz||_2^nCT^{\frac{n-2}{2}}||L_1||^n.$$
The initial condition $\xi$ is assumed to be deterministic and hence $C_1$ can be written as $C_1=C_{L_0}^{n/2}(||\pi\xi||^n+||(1-\pi)\xi||^n)+\eps^{2n}(KK_G/\kappa)^n||\xi||^n.$

Applying Gronwall inequality we have $\expt\,\mfXeps_{T}\,\,\leq\,\,\frac{2C_1}{2^{-5(n-1)}}\exp\left(\frac{2C_2}{2^{-5(n-1)}}T\right)$ for small enough $\eps$.

For $0\leq n\leq 2$ we can use $\expt \sup_{t\in [0,T]}|\Xeps_t|^n \,\leq\, 1+ \expt \sup_{t\in [0,T]}|\Xeps_t|^3.$

\subsection{Proof of Proposition \ref{prop:mul:supUpsT}}\label{apsec:prf:prop:mul:supUpsT}
Recall the $\stabsol$ defined in \eqref{eq:stabsoldefprob}.
We have
\begin{align*}
\left(\Th(\frac{s-u}{\eps^2})(1-\pih)\Ind\right)(\theta)=\begin{cases}\stabsol\left(\frac{s+\eps^2\theta -u}{\eps^2}\right), \qquad s+\eps^2\theta-u\geq 0\\
\pih \Ind\left(\frac{s+\eps^2\theta -u}{\eps^2}\right), \qquad s+\eps^2\theta-u< 0.
\end{cases}
\end{align*}

Note that
\begin{align}\label{add:newlem:supbound:eq:2_split1}
\sup_{s\in [0,T]}\Upsilon^{\eps}_s \,\,&\leq\,\,\sup_{s\in [0,T]}\sup_{\theta\in [-r,0]} \left|\int_0^{(s+\eps^2\theta)\vee 0}\stabsol\left(\frac{s+\eps^2\theta -u}{\eps^2}\right) dZ_u\right| \\ \notag
& \qquad + \sup_{s\in [0,T]}\sup_{\theta\in [-r,0]} \left|\int_{(s+\eps^2\theta)\vee 0}^s\pih \Ind\left(\frac{s+\eps^2\theta -u}{\eps^2}\right)dZ_u\right| \qquad =: \mathscr{J}_1 + \mathscr{J}_2.
\end{align}
In the above $t\vee s$ means $\max \{t,\,s\}$.
 
For $\mathscr{J}_1$ we have (with $\delta \in (0,2)$)
\begin{align*}
\mathscr{J}_1 \,\,&=\,\,\sup_{t\in [0,T]} \left|\int_0^{t}\stabsol\left(\frac{t -u}{\eps^2}\right)dZ_u\right| \\
&\leq  \,\,\sup_{t\in [r \eps^\delta,T]} \left|\int_0^{t-r \eps^\delta}\stabsol\left(\frac{t -u}{\eps^2}\right)dZ_u\right|   + \sup_{t\in [0,T]} \left|\int_{(t-r \eps^\delta)\vee 0}^t\stabsol\left(\frac{t -u}{\eps^2}\right)dZ_u\right| \qquad =: \mathscr{J}_{1a} + \mathscr{J}_{1b}.
\end{align*}
Using integration by parts and exponential decay of $\stabsol$ and $\stabsol'$ (see \eqref{eq:stabsolexpdecesti}--\eqref{eq:expdecayforhprime}) in $\mathscr{J}_{1a}$ we have
\begin{align*}
\mathscr{J}_{1a} \,\,\,\leq &\,\,\,\sup_{t\in [r \eps^\delta,T]}  |\stabsol(r \eps^{\delta-2}) Z_{t-r \eps^{\delta}}|  + \sup_{t\in [r \eps^\delta,T]} \frac{1}{\eps^2} \int_0^{t-r\eps^\delta}\left|\stabsol'\left(\frac{t -u}{\eps^2}\right)\right|\,|Z_u|\,du \\
\leq & \,\,\, \widetilde{K} e^{-\kappa r \eps^{\delta-2}}\sup_{t\in [0,T]}|Z_t|\,\,+\,\, \sup_{t\in [r \eps^\delta,T]} \frac{1}{\eps^2}||L_0||\widetilde{K}\int_0^{t-r\eps^\delta}e^{-\kappa (t-u)/\eps^2}\,|Z_u|\,du \\
\leq & \,\,\, \widetilde{K} \left(1+\frac{||L_0||}{\kappa}\right)e^{-\kappa r \eps^{\delta-2}}\sup_{t\in [0,T]}|Z_t|,
\end{align*}
where $\widetilde{K}=K||(1-\pi)\Ind||$.
For $\mathscr{J}_{1b}$ we use
\begin{align*}
\stabsol\left(\frac{t -u}{\eps^2}\right)=\stabsol\left(\frac{t -((t-r\eps^{\delta})\vee 0)}{\eps^2}\right)-\frac{1}{\eps^2}\int_{(t-r\eps^{\delta})\vee 0}^u \stabsol'\left(\frac{t -\tau}{\eps^2}\right)d\tau, \qquad \text{ for } u\in [(t-r\eps^{\delta})\vee 0,t].
\end{align*}
Now, using the definition \ref{def:modcont} of modulus of continuity, we have
\begin{align*}
\mathscr{J}_{1b}\,\,&\leq\,\,
\sup_{t\in[0,r\eps^\delta]}|\stabsol(t \eps^{-2})|\,|Z_t| \,+\, \sup_{t\in[r\eps^\delta,T]}|\stabsol(r\eps^{\delta-2})|\,|Z_t-Z_{t-r\eps^\delta}| \\
& \qquad \qquad + \sup_{t\in [0,T]}\frac{1}{\eps^2}\left|\int_{(t-r \eps^\delta)\vee 0}^t\left(\int_{(t-r \eps^\delta)\vee 0}^u\stabsol'\left(\frac{t -\tau}{\eps^2}\right)d\tau\right) dZ_u\right| \\
&\leq 2\widetilde{K} \modcont(r\eps^{\delta},T;Z) + \sup_{t\in [0,T]}\frac{1}{\eps^2}\left|\int_{(t-r \eps^\delta)\vee 0}^t\left(\int_{\tau}^t dZ_u\right) \stabsol'\left(\frac{t -\tau}{\eps^2}\right)d\tau\right| \\
& \leq 2\widetilde{K} \modcont(r\eps^{\delta},T;Z) + \sup_{t\in [0,T]}\frac{1}{\eps^2}\int_{(t-r \eps^\delta)\vee 0}^t |Z_t-Z_{\tau}| \left|\stabsol'\left(\frac{t -\tau}{\eps^2}\right)\right|d\tau \\
& \leq 2\widetilde{K} \modcont(r\eps^{\delta},T;Z) +  \modcont(r\eps^{\delta},T;Z) \sup_{t\in [0,T]}\frac{1}{\eps^2}\int_{(t-r \eps^\delta)\vee 0}^t K||L_0||e^{-\kappa(t-\tau)/\eps^2}d\tau \\
& \leq 2\widetilde{K} \left(1+\frac{||L_0||}{2\kappa}\right)\modcont(r\eps^\delta,T;Z).
\end{align*}

For $\mathscr{J}_2$ we make use of the following facts: 
\begin{align*}
\pih\Ind(v-u)&=\Psiz_1 \cos(\om_c(v-u))+\Psiz_2\sin(\om_c(v-u)) \\
&=(\Psiz_1\cos \om_cv+\Psiz_2\sin \om_cv)\cos \om_cu + (\Psiz_1\sin \om_cv-\Psiz_2\cos \om_cv)\sin \om_cu, 
\end{align*}
and $|\Psiz_1\cos \om_cv+\Psiz_2\sin \om_cv|\leq \sqrt{\Psiz_1^2+\Psiz_2^2}=||\Psiz||_2$. Using these it is easy to see that the  
\begin{align}\label{add:newlem:supbound:eq:2_split2_bound}
\mathscr{J}_2 \,\,\leq \,\, ||\Psiz||_2\,\sup_{s\in [0,T]}\sup_{\theta\in [-r,0]} \left(\left|M^{c,\eps}_{s}-M^{c,\eps}_{(s+\eps^2\theta)\vee 0}\right| + \left|M^{s,\eps}_{s}-M^{s,\eps}_{(s+\eps^2\theta)\vee 0}\right|\right)
\end{align}
where 
\begin{align*}
M^{c,\eps}_t=\int_0^t \cos(\om_cu/\eps^2)dZ_u, \qquad M^{s,\eps}_t=\int_0^t \sin(\om_cu/\eps^2)dZ_u.
\end{align*}
Using the definition \ref{def:modcont} of modulus of continuity, we have
\begin{align*}
\mathscr{J}_2 \,\,\leq \,\,||\Psiz||_2 \,\left( \,\modcont(\eps^2r,T;M^{c,\eps}) \,+\, \modcont(\eps^2r,T;M^{s,\eps}) \,\right).
\end{align*}

Collecting all the above estimates in \eqref{add:newlem:supbound:eq:2_split1} we have
\begin{align*}
\sup_{s\in [0,T]}\Upsilon^{\eps}_s \,\,&\leq\,\,\widetilde{K} \left(1+\frac{||L_0||}{\kappa}\right)e^{-\kappa r \eps^{\delta-2}}\sup_{t\in [0,T]}|Z_t|\\
& \qquad +\,\,2\widetilde{K} \left(1+\frac{||L_0||}{2\kappa}\right)\modcont(r\eps^\delta,T;Z)\\
& \qquad +\,\,||\Psiz||_2 \,\left( \,\modcont(\eps^2r,T;M^{c,\eps}) \,+\, \modcont(\eps^2r,T;M^{s,\eps}) \,\right).
\end{align*}
Now we take expectations.
Using Burkholder-Davis-Gundy inequality and lemma \ref{lem:mul:quest:supuntilTisconst}, we have for $n \geq 1$, $$\expt\sup_{t\in [0,T]}|Z_t|^n\,\leq \,C\expt\la Z\ra_{T}^{n/2}\,\leq\,C\expt\left(T \sup_{t\in[0,T]}|L_1(\pjeps_t\Xeps)|^2\right)^{n/2}\leq CT^{n/2}\mathfrak{C}.$$
Using the  Theorem 1 in section 3 of \cite{FischerNappo} and lemma \ref{lem:mul:quest:supuntilTisconst}, we get that there exists constants $C_{\modcont}$, $C_{\modcont}^c$ and $C_{\modcont}^s$ such that, $\expt \,\modcont^n(r \eps^\delta,T;Z) \leq C_{\modcont}\left({r\eps^\delta}\ln\left(\frac{2T}{r\eps^\delta}\right)\right)^{n/2}$, $\expt \,\modcont^n( \eps^2r,T;M^{c,\eps}) \leq C_{\modcont}^c\left({\eps^2r}\ln\left(\frac{2T}{\eps^2r}\right)\right)^{n/2}$ and $\expt \,\modcont^n( \eps^2r,T;M^{s,\eps}) \leq C_{\modcont}^s\left({\eps^2r}\ln\left(\frac{2T}{\eps^2r}\right)\right)^{n/2}$.
Collecting all, we have
\begin{align*}
2^{-5(n-1)}\expt\sup_{s\in [0,T]}(\Upsilon^{\eps}_s)^n \,\,&\leq\,\,\widetilde{K}^n\left(1+\frac{||L_0||}{\kappa}\right)^nCT^{n/2}\mathfrak{C}\,e^{-n\kappa r \eps^{\delta-2}}\\
& \qquad +\left(2\widetilde{K} \left(1+\frac{||L_0||}{2\kappa}\right)\right)^n C_{\modcont}\left({\eps^\delta r}\ln\left(\frac{2T}{\eps^\delta r}\right)\right)^{n/2} \\
& \qquad +||\Psiz||_2^{n}(C_{\modcont}^c+C_{\modcont}^s)\left({\eps^2r}\ln\left(\frac{2T}{\eps^2r}\right)\right)^{n/2}.
\end{align*}
As $\eps\to 0$, the 2nd term on the RHS dominates and hence we have \eqref{prop:mul:supUpsT_statement_mully}.

\subsection{Proof of Proposition \ref{prop:mul:stabnorm}}\label{apsec:prf:prop:mul:stabnorm}
Using the variation of constants formula \eqref{eq:vocformstateW}, we have
\begin{align}\label{locap03_eq:vocnonlinstable}
||\stbp^{\eps}_s||\,\,\,\leq &\,\,||\int_0^s\Th(\frac{s-u}{\eps^2})(1-\pih)\Ind G(\hpex{u})du || \,\,+\,\,\Upsilon^{\eps}_s.
\end{align}
Using the exponential decay \eqref{eq:expdecesti} we have
\begin{align*}
||\int_0^s & \Th(\frac{s-u}{\eps^2})(1-\pih)\Ind  G(\hpex{u})du || \,\, \leq \,\,\,\int_0^s||\Th(\frac{s-u}{\eps^2})(1-\pih)\Ind  ||\,|G(\hpex{u})|\,du \\ \notag
& \leq \,\,\, K_G\sup_{u\in [0,s]}||\hpex{u}||
\int_0^s \widetilde{K}e^{-\kappa(s-u)/\eps^2}\,du\,\,\,\leq \,\,\,(\eps^2K_G\widetilde{K}/\kappa)\sup_{u\in [0,s]}||\hpex{u}||,
\end{align*}
where $\widetilde{K}=K||(1-\pih)\Ind||$.
Hence for $s\in [0,T]$
\begin{align*}
||\stbp^\eps_s||\,\,&\leq\,\,(\eps^2K_G\widetilde{K}/\kappa)\sup_{u\in [0,s]}||\hpex{u}||+\,\,\Upsilon^{\eps}_s. 
\end{align*}
Hence
\begin{align*}
\sup_{s\in [0,t]}||\stbp^\eps_s||\,\,&\leq\,\,(\eps^2K_G\widetilde{K}/\kappa)\sup_{s\in [0,t]}||\hpex{s}||+\,\,\sup_{s\in [0,t]}\Upsilon^{\eps}_s.
\end{align*}
Raise to power $n$, take expectation and apply lemma \ref{lem:mul:quest:supuntilTisconst} for the first term on the RHS and proposition \ref{prop:mul:supUpsT} for the second term to get \eqref{prop:mul:stabnorm_statement_b}.

\subsection{Proof of lemma \ref{lem:mul:aux4comparison}}\label{apsec:prf:lem:mul:aux4comparison}
For any $\R^2$-vector $v$, and $\theta\in [-r,0]$, we have $\Phi(\theta) e^{tB/\eps^2}v=v_1\cos((\om_ct/\eps^2)+\theta)+v_2\sin((\om_ct/\eps^2)+\theta)$. Hence 
\begin{align}\label{eq:normPhietBepsqv}
||\Phi e^{tB/\eps^2}v||=\sup_{\theta\in [-r,0]}|\Phi(\theta) e^{tB/\eps^2}v| \,\leq\,\sqrt{v_1^2+v_2^2}.
\end{align}
Using Lipshitz condition on $G$, and then using $y^\eps_t-\ydec^\eps_t=\stbp^\eps_t$ and \eqref{eq:normPhietBepsqv}, we get
\begin{align}\label{eq:auxHelpGDiff_boundDrifCoef}
\left|G(\Phi e^{tB/\eps^2}\zed_t+y^\eps_t)-G(\Phi e^{tB/\eps^2}\zedh_t+\ydec^\eps_t)\right|\,\,\leq\,\,K_G(||\stbp^\eps_t||+\sqrt{2\alpha^\eps_t}).
\end{align}
Using the definition \eqref{eq:BigGammaDriftCoefDef} of $\Gamma_t$ we have
\begin{align*}
|\Gamma_t|\,\,&=\,\,\big|\big(\Psiz_1(\zed_t-\zedh_t)_1+\Psiz_2(\zed_t-\zedh_t)_2\big)\cos(\om_ct/\eps^2)+\big(\Psiz_1(\zed_t-\zedh_t)_2-\Psiz_2(\zed_t-\zedh_t)_1\big)\sin(\om_ct/\eps^2)\big| \\
&\leq \,\,\sqrt{(\Psiz_1(\zed_t-\zedh_t)_1+\Psiz_2(\zed_t-\zedh_t)_2)^2+(\Psiz_1(\zed_t-\zedh_t)_2-\Psiz_2(\zed_t-\zedh_t)_1)^2} \\
&= \,\,||\Psiz||_2\,\sqrt{2\alpha^\eps_t}.
\end{align*}
Using the above inequality and \eqref{eq:auxHelpGDiff_boundDrifCoef} in the definition of $\mathscr{B}_t$ we get
\begin{align*}
|\mathscr{B}_t|\,\,&\leq\,\,||\Psiz||_2\sqrt{2\alpha^\eps_t}\,K_G(||\stbp^\eps_t||+\sqrt{2\alpha^\eps_t})\,\,+\,\,\frac12||\Psiz||_2^2||L_1||^2(\sqrt{2\alpha^\eps_t}+||\stbp^\eps_t||)^2 \\
&\leq\,\,||\Psiz||_2K_G(\frac{||\stbp^\eps_t||^2+2\alpha^\eps_t}{2}+2\alpha^\eps_t)\,\,+\,\,||\Psiz||_2^2||L_1||^2(2\alpha^\eps_t+||\stbp^\eps_t||^2) \\
&\leq\,\,C_{\mathscr{B}}(\alpha^\eps_t+||\stbp^\eps_t||^2).
\end{align*}
Using $|\Gamma_t| \leq ||\Psiz||_2\,\sqrt{2\alpha^\eps_t}$ in the definition of $\Sigma_t$ we get
\begin{align*}
\Sigma_t^2\,\,&\leq\,\,\Gamma_t^2||L_1||^2(\sqrt{2\alpha^\eps_t}+||\stbp^\eps_t||)^2\\
&\leq\,\,||\Psiz||_2^2||L_1||^22\alpha^\eps_t(\sqrt{2\alpha^\eps_t}+||\stbp^\eps_t||)^2\,\,\leq\,\,16||\Psiz||_2^2||L_1||^2((\alpha^\eps_t)^2+||\stbp^\eps_t||^4).
\end{align*}


\subsection{Proof of Proposition \ref{prop:mul:alphaissmall}}\label{apsec:prf:prop:mul:alphaissmall}
Using lemma \ref{lem:mul:aux4comparison} we have that $$d\alpha^\eps_t\,\leq\,C_{\mathscr{B}}(\alpha^\eps_t+||\stbp^\eps_t||^2)dt+\Sigma_tdW_t.$$ Let 
\begin{align*}
H_t:=C_{\mathscr{B}}\int_0^t||\stbp^\eps_s||^2ds, \qquad M_t:=\int_0^t\Sigma_sdW_s, \qquad L_t:=\int_0^te^{-C_{\mathscr{B}}s}dM_s.
\end{align*}
Then,
\begin{align}\label{eq:alphaepsIneq}
\alpha^\eps_t\,\leq\,\int_0^tC_{\mathscr{B}}\alpha^\eps_sds\,+\,H_t\,+\,M_t.
\end{align}
Applying Gronwall inequality pathwise, we get,
\begin{align}\label{eq:Gronpathw}
\alpha^\eps_te^{-C_{\mathscr{B}}t}\,\leq\,\,(H_t\,+\,M_t)e^{-C_{\mathscr{B}}t}\,+\,\int_0^t(H_s+M_s)C_{\mathscr{B}}e^{-C_{\mathscr{B}}s}ds.
\end{align}
Using integration by parts we get
\begin{align*}
\int_0^tH_sC_{\mathscr{B}}e^{-C_{\mathscr{B}}s}ds\,\,&=\,\,-H_te^{-C_{\mathscr{B}}t}+\int_0^te^{-C_{\mathscr{B}}s}dH_s \\
&\leq\,\,-H_te^{-C_{\mathscr{B}}t}+\int_0^tdH_s\quad=\quad-H_te^{-C_{\mathscr{B}}t}+H_t.
\end{align*}
Using integration by parts we get $\int_0^tM_sC_{\mathscr{B}}e^{-C_{\mathscr{B}}s}ds\,=\,-M_te^{-C_{\mathscr{B}}t}+L_t.$ Using these results in \eqref{eq:Gronpathw} we get
\begin{align*}
0\,\,\leq\,\,\alpha^\eps_te^{-C_{\mathscr{B}}t}\,\,\leq\,\,L_t+H_t.
\end{align*}
Note that $L$ is a martingale. We have
\begin{align*}
\expt \sup_{s\in [0,t]}\left(\alpha^\eps_se^{-C_{\mathscr{B}}s}\right)^2\,\,\leq\,\, \expt \sup_{s\in [0,t]} (L_s+H_s)^2 \,\,&\leq\,\, 2\expt \sup_{s\in [0,t]} L_s^2 + 2\expt \sup_{s\in [0,t]} H_s^2 \\
&\leq\,\, 8\expt  L_t^2 + 2\expt H_t^2
\end{align*}
where in the last step we have used Doob's $L^p$ inequality (Theorem 2.1.7 in \cite{RevuzYor}) and the fact that $H$ is non-decreasing.
Now, using BDG inequality
\begin{align*}
\expt L_{t}^2\,\,&=\,\,\expt\int_0^{t}e^{-2C_{\mathscr{B}}s}\Sigma_s^2ds\,\,\leq\,\,C_{\Sigma}\expt\int_0^{t}e^{-2C_{\mathscr{B}}s}((\alpha^\eps_s)^2+||\stbp^\eps_s||^4)ds,\\
&\leq\,\,C_{\Sigma}\int_0^{t}\expt \sup_{u\in [0,s]}\left(\alpha^\eps_ue^{-C_{\mathscr{B}}u}\right)^2ds\,\,+\,\,C_{\Sigma}\int_0^{t}\expt||\stbp^\eps_s||^4ds.
\end{align*}
Using Holder inequality we have
$$2\expt H_t^2\,\,=\,\,2\expt\left(C_{\mathscr{B}}\int_0^t||\stbp^\eps_s||^2ds\right)^2\,\,\leq\,\,2C_{\mathscr{B}}^2t\int_0^t\expt||\stbp^\eps_s||^4ds.$$
Hence,
\begin{align*}
\expt \sup_{s\in [0,t]}\left(\alpha^\eps_se^{-C_{\mathscr{B}}s}\right)^2\,\,\leq\,\,8C_{\Sigma}\int_0^{t}\expt \sup_{u\in [0,s]}\left(\alpha^\eps_ue^{-C_{\mathscr{B}}u}\right)^2ds\,\,+\,\,(8C_{\Sigma}+2C_{\mathscr{B}}^2t)\int_0^{t}\expt||\stbp^\eps_s||^4ds.
\end{align*}
Using Gronwall and then \eqref{prop:mul:stabnorm_statement_b} we have
\begin{align*}
\expt \sup_{s\in [0,T]}\left(\alpha^\eps_se^{-C_{\mathscr{B}}s}\right)^2\,\,\leq&\,\,(8C_{\Sigma}+2C_{\mathscr{B}}^2T)T\,2^6\left(\eps^{8}2^{3}\left(\frac{K_GK}{\kappa}\right)^4\mathfrak{C}^4\,+\,2^{3}\expt\sup_{s\in [0,T]}(\Upsilon^{\eps}_s)^4\right)e^{8C_{\Sigma}\,T}\\
\leq&\,\,C\left(r\eps^\delta\ln(\frac{2T}{r\eps^\delta})\right)^2, \qquad \text{for small enough } \eps. 
\end{align*}
Hence
\begin{align*}
\expt \sup_{s\in [0,T]}\left(\alpha^\eps_s\right)^2\,\,
\leq\,\,Ce^{2C_{\mathscr{B}}T}\left(r\eps^\delta\ln(\frac{2T}{r\eps^\delta})\right)^2, \qquad \text{for small enough } \eps. 
\end{align*}


\subsection{Proof of Proposition \ref{prop:mul:betaissmall}}\label{apsec:prf:prop:mul:betaissmall}
Following exactly the same technique as for $\alpha^\eps$, we arrive at
\begin{align*}
\expt \sup_{s\in [0,t]}\left(\beta_se^{-C_{\mathscr{B}}s}\right)^2\,\,\leq\,\,8C_{\Sigma}\int_0^{t}\expt \sup_{u\in [0,s]}\left(\beta_ue^{-C_{\mathscr{B}}u}\right)^2ds\,\,+\,\,(8C_{\Sigma}+2C_{\mathscr{B}}^2t)\int_0^{t}\expt||\ydec^\eps_s||^4ds.
\end{align*}
Using the exponential decay \eqref{eq:stabsolexpdecesti} we have that $\int_0^{t}\expt||\ydec^\eps_s||^4ds  \,\leq\,\widetilde{K}^4\int_0^te^{-4\kappa s/\eps^2}ds \,\leq\,\eps^2(\widetilde{K}^4/4\kappa)$ where $\widetilde{K}=K||(I-\pih)\Ind||$. Using Gronwall inequality we have
\begin{align*}
\expt \sup_{s\in [0,T]}\left(\beta_se^{-C_{\mathscr{B}}s}\right)^2\,\,&\leq\,\,\eps^2(8C_{\Sigma}+2C_{\mathscr{B}}^2T)T (\widetilde{K}^4/4\kappa) e^{8C_{\Sigma}\,T}.
\end{align*}

\subsection{Proof of theorem \ref{prop:mul:finalthem}}\label{apsec:prf:prop:mul:mainTh}
Using $X^\eps(t)=\Phi(0)e^{tB/\eps^2}\zed_t+y^\eps_t(0)$ and Minkowski inequality in \eqref{eq:mul:finthem_zedtapprx} and then using \eqref{eq:normPhietBepsqv} we have
\begin{align*}
\big|X^\eps(t)-\big(\Phi(0)e^{tB/\eps^2}\zedt_t+\ydec^\eps_t(0)\big)\big|^4 \,&\leq\, 8\left(||\Phi e^{tB/\eps^2}(\zed_t-\zedh_t)||^4+||\Phi e^{tB/\eps^2}(\zedh_t-\zedt_t)||^4+||y^\eps_t-\ydec^\eps_t||^4\right) \\
&\leq\, 8\left(||\zed_t-\zedh_t||_2^4+||\zedh_t-\zedt_t||_2^4+||\stbp^\eps_t||^4\right) \\
&\leq\, 8\left(4(\alpha^\eps_t)^2+4(\beta^\eps_t)^2+||\stbp^\eps_t||^4\right).
\end{align*}
Combining propositions \ref{prop:mul:stabnorm}, \ref{prop:mul:alphaissmall} and \ref{prop:mul:betaissmall} and realizing that  $\left(r\eps^\delta\ln(\frac{2T}{r\eps^\delta})\right)^2 \,\ll\,\eps^2$ for small enough $\eps$ when $\delta \in (1,2)$, we get \eqref{eq:mul:finthem_zedtapprx}. Similar is the proof for \eqref{eq:mul:finthem_zedhapprx}.

\subsection{Proof of lemma \ref{lem:mul:auxzbound}}\label{apsec:prf:lem:mul:auxzbound}
Define $\zeta^\eps_t=\frac12||\zedt_t||_2^2$. Using Ito formula we have $d\zeta^\eps_t=\widetilde{\mathscr{B}}_tdt+\widetilde{\Sigma}_tdW_t$ where
\begin{align*}
\widetilde{\mathscr{B}}_t\,\,=\,\,\widetilde{\Gamma}_tG(\Phi e^{tB/\eps^2}\zedt_t)\,+\, \frac12||e^{-tB/\eps^2}\Psiz||_2^2 \big(L_1(\Phi e^{tB/\eps^2}\zedt_t)\big)^2, \qquad \quad \widetilde{\Sigma}_t=\widetilde{\Gamma}_tL_1(\Phi e^{tB/\eps^2}\zedt_t),
\end{align*}
and $\widetilde{\Gamma}_t=\sum_{i=1}^2(\zedt_t)_i(e^{-tB/\eps^2}\Psiz)_i$. Using similar technique as in proof of lemma \ref{lem:mul:aux4comparison} it can be shown that $|\widetilde{\mathscr{B}}_t| \leq C_{\widetilde{\mathscr{B}}}\zeta^\eps_t$ 
and $\widetilde{\Sigma}_t^2\leq C_{\widetilde{\Sigma}}(\zeta^\eps_t)^2$ where $C_{\widetilde{\mathscr{B}}}=2||\Psiz||_2K_G+||\Psiz||_2^2||L_1||^2$ and $C_{\widetilde{\Sigma}}=4||\Psiz||_2^2||L_1||^2$. Hence we have
\begin{align*}
\zeta^\eps_t \,\,\leq\,\, \int_0^tC_{\widetilde{\mathscr{B}}}\zeta^\eps_sds + \widetilde{H}_t + \widetilde{M}_t, \qquad \widetilde{H}_t:=\zeta^\eps_0, \quad \widetilde{M}_t:=\int_0^t \widetilde{\Sigma}_sdW_s,
\end{align*}
which is analogous to \eqref{eq:alphaepsIneq}. Following the same technique as in section \ref{apsec:prf:prop:mul:alphaissmall} we get
\begin{align*}
\expt \sup_{s\in [0,T]}\left(\zeta^\eps_s\right)^2\,\,
\leq\,\,\expt (\zeta^\eps_0)^2 e^{(2C_{\widetilde{\mathscr{B}}}+8C_{\widetilde{\Sigma}})T}.
\end{align*}
\section{Proofs of results in section \ref{sec:addnoi}}

\subsection{Proof of proposition \ref{prop:add:stabnorm}}\label{apsec:prf:prop:add:stabnorm}
Using the variation of constants formula \eqref{eq:vocformstateW_addn} and definition \eqref{eq:UpsandZdef_add}, we have
\begin{align}\label{locadd:eq:vocnonlinstable}
||\stbp^{\eps}_s||\,\,\,\leq &\,\,\,||\int_0^s\Th(\frac{s-u}{\eps^2})(1-\pih)\Ind G(\pjeps_u \Xeps)du || \,\,+\,\,\Upsilon^{\eps}_s.
\end{align}
For $G$ defined in \eqref{eq:considerMAINadd} we have
\begin{align}\label{eq:boundonGforTstopt}
|G(\eta)| &\leq \int |\pi\eta| |d\nu_1|+\int |(1-\pi)\eta| |d\nu_1|+ \sum_{j=0}^3\binom{3}{j}\int_{-r}^0|\pi\eta|^{3-j}|(1-\pi)\eta|^j|d\nu_3|.
\end{align}
For $s\in [0,T\wedge\stopteps]$ we have that $||\pi\pjeps_s\Xeps||\leq C_{\stopt}$. Using this fact and $||(1-\pi)\pjeps_s \Xeps||\leq ||\stbp^\eps_s||+||\ydec^\eps_s||$ in \eqref{eq:boundonGforTstopt}, and using inequalities $q \leq 1+q^3$, $q^2\leq 1+q^3$ for $q>0$; 
we have for $s\in [0,T\wedge\stopteps]$
$$|G(\pjeps_s\Xeps)|\leq C(1+||\stbp^\eps_s||^3+||\ydec^\eps_s||^3).$$
This $C$ is of the order of $C_{\stopt}^3$ for large $C_{\stopt}$.
Now, using the above inequality and the exponential decays \eqref{eq:expdecesti} and \eqref{eq:ydecexpfastdec} we have
\begin{align*}
||\int_0^s & \Th(\frac{s-u}{\eps^2})(1-\pih)\Ind  G(\hpex{u})du || \,\, \leq \,\,\,\int_0^s||\Th(\frac{s-u}{\eps^2})(1-\pih)\Ind  ||\,|G(\hpex{u})|\,du \\ \notag
& \leq \,\,\, C\int_0^s e^{-\kappa(s-u)/\eps^2}(1+||\stbp^\eps_u||^3+||\ydec^\eps_u||^3)\,du \,\,\,\,\\
&\leq \,\,\,(C\eps^2/\kappa)(1+K^3||(1-\pi)\pjeps_0\Xeps||^3/2)\,\,+\,\,C\int_0^s e^{-\kappa(s-u)/\eps^2}\,||\stbp^\eps_u||^3\,du.
\end{align*} 
Plugging the above inequality in \eqref{locadd:eq:vocnonlinstable} we have for $s\in [0,T\wedge \stopteps]$
\begin{align*}
||\stbp^\eps_s||-\left({C}\eps^2\,\,+\,\,{C}\int_0^s e^{-\kappa(s-u)/\eps^2}\,||\stbp^\eps_u||^3\,du\right)\,\,\leq\,\,\Upsilon^{\eps}_s,
\end{align*}
where $C$ above is of the order of $C_{\stopt}^3$ for large $C_{\stopt}$.
For the  RHS of the above inequality we use Markov inequality, i.e.
\begin{align*}
\mbbP\left[\sup_{s\in [0,T\wedge\stopteps]}\Upsilon^{\eps}_s\,\geq\,\eps^a\right] \,\,\leq\,\,\eps^{-a}\expt\left[\sup_{s\in [0,T]}\Upsilon^{\eps}_s\right]
\end{align*}
and then proposition \ref{prop:add:supUpsT}. Then we have the following statement: 

Fix $a\in [0,1)$. For $\delta \in (2a,2)$, there exists constants $\hat{C}>0$ (independent of $\delta$ and $a$) and $\eps_{\delta}>0$, such that for $\eps < \eps_{\delta}$ 
\begin{align*}
\mbbP\bigg[\forall s\in [0,T\wedge\stopteps],\quad ||\stbp^\eps_s|| \,\,\leq\,\,& C\eps^2\,\,+\,\,C\int_0^s e^{-\kappa(s-u)/\eps^2}\,||\stbp^\eps_u||^3\,du+ 2\eps^a \bigg] \\ 
& \geq\,\,\,1-\hat{C}\eps^{-a}\sqrt{{r\eps^\delta}\ln\left(\frac{T}{r\eps^\delta}\right)}. 
\end{align*}

Using Gronwall kind of inequality (Theorem 2.4.8 in \cite{Bainov}) we have that LHS of above inequality is bounded above by
\begin{align*}
\mbbP\bigg[\forall s\in [0,T\wedge\stopteps],\quad ||\stbp^\eps_s|| \,\,\leq\,\,\frac{C\eps^2+2\eps^a }{\sqrt{1-2\int_0^s\left(C\eps^2+2\eps^a \right)^2Cdu}} \bigg] 
\end{align*}
which is bounded above by (for small enough $\eps$, i.e. $\eps\ll (2/C)^{1/(2-a)}$)
\begin{align*}
\mbbP\bigg[\forall s\in [0,T\wedge\stopteps],\quad ||\stbp^\eps_s|| \,\,\leq\,\,\frac{4\eps^a }{\sqrt{1-2CT(4\eps^a)^2}} \bigg]. 
\end{align*}
which is bounded above by (for small enough $\eps$, i.e. $\eps \ll (1/C)^{1/2a}$)
\begin{align*}
\mbbP\bigg[\forall s\in [0,T\wedge\stopteps],\quad ||\stbp^\eps_s|| \,\,\leq\,\,{8\eps^a } \bigg]. 
\end{align*}
Hence \eqref{add:newprop:supboundGron_statement_Gcub_lin_a_add} follows.

\subsection{Proof of lemma \ref{lem:add:aux4comparison}}\label{apsec:prf:lem:add:aux4comparison}
Recall that $G(\eta)=\int_{-r}^0\eta(\theta)d\nu_1(\theta)+\int_{-r}^0\eta^3(\theta)d\nu_3(\theta)$. For brevity, let $\bfe$ denote $e^{tB/\eps^2}$.  Now,
\begin{align*}
\bigg|\int_{-r}^0&(\Phi \bfe \zed_t+y^\eps_t)^3d\nu_3  - \int_{-r}^0(\Phi \bfe\zedh_t+\ydec^\eps_t)^3d\nu_3 \bigg|\\
&=\,\left|\int_{-r}^0(\Phi \bfe\zed_t+\ydec^\eps_t+\stbp^\eps_t)^3d\nu_3 - \int_{-r}^0(\Phi \bfe \zed_t+\Phi \bfe(\zedh_t-\zed_t)+\ydec^\eps_t)^3d\nu_3 \right| \\
&\leq \sum_{j=1}^3\binom{3}{j}\left|\int_{-r}^0(\Phi \bfe\zed_t)^{3-j}((\ydec^\eps_t+\stbp^\eps_t)^j-(\Phi \bfe(\zedh_t-\zed_t)+\ydec^\eps_t)^j)d\nu_3 \right| \\
& \leq \left(3\sum_{j=0}^2||\Phi \bfe\zed_t||^j\right)\left(3\sum_{j=0}^2||\ydec^\eps_t||^j\right) \left(\int|d\nu_1|+|d\nu_3|\right)\sum_{j=1}^3(||\stbp^\eps_t||^j+||\Phi \bfe(\zedh_t-\zed_t)||^j).
\end{align*}
Note that for $t\in [0,T\wedge\stopteps]$, $\zed$ is bounded. Also, due to the exponential decay \eqref{eq:ydecexpfastdec} we have that $||\ydec^\eps_t|| \,<\,K||(I-\pi)\hpex{0}||$.  Hence we have,
\begin{align*}
\bigg|\int_{-r}^0(\Phi \bfe \zed_t+y^\eps_t)^3d\nu_3  - \int_{-r}^0(\Phi \bfe\zedh_t+\ydec^\eps_t)^3d\nu_3 \bigg| 
\,\, \leq \,\,C\sum_{j=1}^3(||\stbp^\eps_t||^j+(\sqrt{2\alpha^\eps_t})^j),
\end{align*}
where $C$ in the above inequality is of the order of $C_{\stopt}^2$ for large $C_{\stopt}$.

Similarly,
\begin{align*}
\left|\int_{-r}^0(\Phi \bfe \zed_t+y^\eps_t)d\nu_1 - \int_{-r}^0(\Phi \bfe\zedh_t+\ydec^\eps_t)d\nu_1 \right|\,&\leq\,C(||\stbp^\eps_t||+(\sqrt{2\alpha^\eps_t})).
\end{align*}
Combining, we get that for $t\in [0,T\wedge\stopteps]$, $$|\Gamma_t|\left|G(\Phi e^{tB/\eps^2}\zed_t+y^\eps_t)-G(\Phi e^{tB/\eps^2}\zedh_t)\right|\,\,\leq\,\,|\Gamma_t|C\sum_{j=1}^3(||\stbp^\eps_t||^j+(\sqrt{2\alpha^\eps_t})^j).$$
We have shown $|\Gamma_t|\leq ||\Psiz||_2\sqrt{2\alpha^\eps_t}$ in section \ref{apsec:prf:lem:mul:aux4comparison}. Hence, if we define $\mathfrak{B}$ by \eqref{eq:def:mfBadd}, then we have, $|\mathscr{B}_t|\leq \mathfrak{B}(\alpha_t,||\stbp^\eps_t||)$ for  $t\in[0,T\wedge \stopteps]$.


\subsection{Proof of proposition \ref{prop:add:alhaissmall}}\label{apsec:prf:prop:add:alhaissmall}
Using lemma  \ref{lem:add:aux4comparison} and $\sqrt{2\alpha}(\sqrt{2\alpha})^2\leq 4\alpha(1+\alpha)$ we have
$$d\alpha^\eps_t \,\,\leq\,\,(6C\alpha^\eps_t+8C(\alpha^\eps_t)^2)dt + C\sqrt{2\alpha^\eps_t}(\sum_{j=1}^3||\stbp^\eps_t||^j)dt, \qquad t\in [0,T\wedge\stopteps], \qquad \alpha^\eps_0=0,$$
where $C$ is from lemma \ref{lem:add:aux4comparison}. This $C$ is  of the order $\Ord(C_{\stopt}^2)$ for large $C_{\stopt}$.
Let $\eps_{a,C_{\stopt}}$ be as in proposition \ref{prop:add:stabnorm} and define $\eps_{a,C_{\stopt},1}=\min\{1,\eps_{a,C_{\stopt}}\}$. Then we have, for $\eps < \min\{\hat{\eps}_{\delta},\,\eps_{a,C_{\stopt},1}\}$, with probability atleast $p_{\eps}:=1-\hat{C}\eps^{-a}\sqrt{{r\eps^\delta}\ln\left(\frac{T}{r\eps^\delta}\right)},$
$$\sum_{j=1}^3||\stbp^\eps_t||^j \,\,\leq\,\,24\eps^a, \qquad \forall t\in [0,T\wedge\stopteps].$$
(we have used that for $\eps \leq 1$, $\eps^{3a}\leq \eps^a$.)

Let $\mathfrak{s}:=\inf\{t\geq 0\,:\,\alpha^\eps_t\geq 1\}$. 
Using $\sqrt{2\alpha}\leq 2(1+\alpha)$, and $\alpha^2<\alpha$ when $\alpha<1$; we have for $t\in [0,T\wedge\stopteps\wedge \mathfrak{s}]$ 
\begin{align*}
\frac1C d\alpha^\eps_t \,\,\leq\,\,\alpha^\eps_t\left(6+8+48\eps^a\right)dt\,+\,48\eps^adt
\end{align*}
Using Gronwall we get for $t\in [0,T\wedge\stopteps\wedge \mathfrak{s}]$ 
\begin{align*}
\alpha^\eps_t\,\,\leq\,\,\frac{48\eps^a}{14+48\eps^a}(e^{(14+48\eps^a)Ct}-1).
\end{align*}
Define $\eps_{a,C_{\stopt},2}=\min\{48^{-1/a},\,(14e^{-15CT}/48)^{2/a}\}$. Then for $\eps < \min\{\hat{\eps}_{\delta},\,\eps_{a,C_{\stopt},1},\,\eps_{a,C_{\stopt},2}\}$ we have $\alpha^\eps_t \leq \eps^{a/2}$ for $t\in [0,T\wedge\stopteps\wedge \mathfrak{s}]$. But, since $\eps^{a/2}<1$ we have $\mathfrak{s}> T\wedge\stopteps$ and hence $\alpha^\eps_t \leq \eps^{a/2}$ for $t\in [0,T\wedge\stopteps]$.

Note that $C$ is of the order $\Ord(C_{\stopt}^2)$ and hence $\eps_{a,C_{\stopt},2}$ is of the order $\Ord(e^{-30C_{\stopt}^2T/a})$.


\subsection{Proof of theorem \ref{prop:helpinprobconvg}}\label{apsec:prf:prop:add:helpinprobconvg}

Since the $\zedh$ system possesses the property $\mathscr{P}(T,q)$,  $\,\,\exists\, C_{\stopt},\,\eps_*>0$ such that $\forall \eps<\eps_*$, we have $\mbbP[E^\eps]\geq 1-q$ where $E^\eps$ is given in \eqref{assonzedh_setEeps}. 

The stopping time $\stopteps$ was defined at \eqref{eq:stoptimedefy}.

Using 
\begin{align*}
\Om \,\,&=\,\,E^\eps \,\cup\, (\Om \setminus E^\eps ) \\
&=\,\,(E^\eps \cap \{\stopteps \leq T\}) \,\cup\, (E^\eps \cap \{\stopteps > T\}) \,\cup\, (\Om \setminus E^\eps ), 
\end{align*}
we have
\begin{align}\label{eq:auxlevprob2remstopt_add}
H^\eps \,\,&=\,\,(E^\eps \cap \{\stopteps \leq T\}\cap H^\eps) \,\cup\, (E^\eps \cap \{\stopteps > T\}\cap H^\eps) \,\cup\, (H^\eps \cap (\Om \setminus E^\eps) ).
\end{align}

Now we deal with the first term on the RHS of \eqref{eq:auxlevprob2remstopt_add}. Note that
\begin{align}
\sup_{t\in[0,T\wedge\stopteps]}\sqrt{2\alpha^{\eps}_t} \,\,\geq\,\sup_{t\in[0,T\wedge\stopteps]}||\Phi e^{tB/\eps^2}(\zed_t-\zedh_t)||\,\,\geq\,\sup_{t\in[0,T\wedge\stopteps]}||\Phi e^{tB/\eps^2}\zed_t||-\sup_{t\in[0,T\wedge\stopteps]}||\Phi e^{tB/\eps^2}\zedh_t||.
\end{align}
In $E^\eps$ we have $\sup_{t\in[0,T\wedge\stopteps]}||\Phi e^{tB/\eps^2}\zedh_t||<0.99C_{\stopt}$, and in $\{\stopteps \leq T\}$ we have $\sup_{t\in[0,T\wedge\stopteps]}||\Phi e^{tB/\eps^2}\zed_t||\geq C_{\stopt}$. Hence, in $E^\eps \cap \{\stopteps \leq T\}$ we have that $\sup_{t\in[0,T\wedge\stopteps]}\sqrt{2\alpha^{\eps}_t}>0.01C_{\stopt}$. Hence $E^\eps \cap \{\stopteps \leq T\} \,\subset\,J^\eps$ where
$J^\eps:=\left\{\om\,:\,\sup_{t\in[0,T\wedge\stopteps]}\alpha^{\eps}_t \,\geq\,\frac12(0.01C_{\stopt})^2\right\}$.
By proposition \ref{prop:add:alhaissmall}, $\exists \,\eps_1$ such that $\forall \eps <\eps_1$, $\mbbP[J^\eps]<1-p_{\eps}$.

Now we deal with the second term on the RHS of \eqref{eq:auxlevprob2remstopt_add}.
Note that  $\{\stopteps > T\}\cap H^\eps \,\subset\,\widetilde{J}^\eps$ where
$$\widetilde{J}^\eps:=\left\{\om\,:\,\sup_{t\in[0,T\wedge\stopteps]}\alpha^{\eps}_t \,\geq\,\eps^{a/2}\right\}.$$
By proposition \ref{prop:add:alhaissmall}, $\exists \,\eps_2$ such that $\forall \eps <\eps_2$, $\mbbP[\widetilde{J}^\eps]<1-p_{\eps}$.

And $\forall \eps <\eps_*$ $\mbbP[\Om \setminus E^\eps]<q$.

 Combining we have, when $\eps<\min\{\eps_1,\eps_2,\eps_*\}=:\eps_q$, $$\mbbP[H^\eps]<q+2(1-p_{\eps}).$$

Note that \eqref{eq:auxlevprob2remstopt_add} is true with $H^\eps$ replaced by $S^\eps$. Using that $$\{\stopteps > T\}\cap S^\eps \,\subset\,\{\om:\sup_{t\in [0,T\wedge\stopteps]}||\stbp^\eps_t||\geq 8\eps^a\}$$
and also that the probability of the latter set is bounded above by $1-p_{\eps}$ we get the desired result.

\subsection{Proof of proposition \ref{lem:add:propertyPTq}}\label{appsec:lem:add:propertyPTq}
We have
\begin{align}\label{eq:lem:add:propertyPTq:eq1}
\zedh_t=\zedh_0+\int_0^te^{-sB/\eps^2}\Psiz G(\Phi e^{sB/\eps^2}\zedh_s+\ydec^\eps_s)ds+\mfw_t, \qquad \mfw_t:=\int_0^te^{-sB/\eps^2}\Psiz \sigma dW_s.
\end{align}
To keep things simple, we prove assuming $||\ydec^\eps_0||=0$ (which ensures that $||\ydec^\eps_t||=0$ for all $t\geq 0$).  
Using $\int_0^t ||\ydec^\eps_s||^nds \leq \eps^2(K/nk)||\ydec^\eps_0||^n$ (because of exponential decay \eqref{eq:ydecexpfastdec}), it is easy to see that the following ideas work even if we assume that  $||\ydec^\eps_0||\neq 0$ (we assume the initial condition is deterministic).

We will make use of the inequality\footnote{Note that $||\Phi e^{tB/\eps^2}v||\,=\,\sup_{\theta\in [-r,0]}|v_1\cos(\theta+ \om_ct/\eps^2)+v_2\sin(\theta + \om_ct/\eps^2)| \,\leq\,|v_1|+|v_2|.$} that for $\R^2$ vector $v$, 
\begin{align}\label{eq:Phievleqv1}
||\Phi e^{tB/\eps^2}v|| \leq ||v||_1
\end{align}
 where $||\cdot||_1$ indicates the 1-norm. 
Using the structure of $G$ specified at \eqref{eq:considerMAINadd} in \eqref{eq:lem:add:propertyPTq:eq1} we have (with some $K_G>0$)
\begin{align}\label{eq:lem:add:propertyPTq:auxeq1p5}
||\zedh_t||_1\leq||\zedh_0||_1+||\Psiz||_1\int_0^tK_G (||\zedh_s||_1+||\zedh_s||_1^3)ds+||\mfw_t||_1.
\end{align}
Because the initial condition is deterministic, we have a $C_0>0$ such that $||\zedh_0||_1<C_0$. For any $C_a>4C_0$, define $T_{C_a}:=\left(2(1+C_a^2)K_G||\Psiz||_1\right)^{-1}.$

Suppose that $\sup_{t\in [0,T]}||\mfw_t||_1<C_a/4$. If $T\leq T_{C_a}$, as long as $||\zedh_t||_1<C_a$, we have (using \eqref{eq:lem:add:propertyPTq:auxeq1p5}) for $t\in [0,T]$
\begin{align*}
||\zedh_t||_1\,\,&<\,\, C_0+K_G||\Psiz||_1(C_a+C_a^3)T+\frac14C_a \\
&< \,\,\frac14C_a +K_G||\Psiz||_1(C_a+C_a^3)T_{C_a}+\frac14C_a
\,\,=\,\,C_a.
\end{align*}
This means that, if $C_a>4C_0$ and $T\leq T_{C_a}$, then we have $\sup_{t\in[0,T]}||\zedh_t||_1<C_a$ provided $\sup_{t\in [0,T]}||\mfw_t||_1<C_a/4$.

Hence, for $C_a>4C_0$ and $T\leq T_{C_a}$,
\begin{align}\label{eq:lem:add:propertyPTq:auxeq2}
\mbbP\left[\sup_{t\in[0,T]}||\zedh_t||_1 \geq C_a\right]\,\,\leq\,\,\mbbP\left[\sup_{t\in[0,T]}||\mfw_t||_1 \geq C_a/4\right].
\end{align}
Using Markov inequality and Burkholder-Davis-Gundy inequality we have
\begin{align*}
\mbbP[\sup_{t\in[0,T]}||\mfw_t||_1\,\geq\,C_a/4]\,\,&\leq\,\,\frac{\expt\sup_{t\in [0,T]}||\mfw_t||_1}{C_a/4}\\
&\leq\,\, \frac{\sum_{j=1}^2C_{bdg}\expt \sqrt{\int_0^T (e^{-sB/\eps^2}\Psiz \sigma)^2ds}}{C_a/4} \,\,\leq\,\,\frac{8|\sigma|\,||\Psiz||_2\,\sqrt{T}C_{bdg}}{C_a}.
\end{align*}
Using the above inequality in \eqref{eq:lem:add:propertyPTq:auxeq2} we have for $C_a>4C_0$ and  $T \leq T_{C_a}$
\begin{align}\label{eq:lem:add:propertyPTq:auxeq3}
\mbbP\left[\sup_{t\in[0,T]}||\zedh_t||_1 \geq C_a\right]\,\,\leq\,\,\frac{8|\sigma|\,||\Psiz||_2C_{bdg}}{C_{a}\sqrt{2K_G||\Psiz||_1(1+C_a^2)}}\,\,=:\,\,f(C_a).
\end{align}
Given $q>0$, let $C_{a,q}>4C_0$ be such that $f(C_a)<q$, $\forall C_a \geq C_{a,q}$.
Such a $C_{a,q}$ exists because $f$ is monotonically decreasing in $C_a$. Set $T_q=T_{C_{a,q}}$. 
Choose $C_{\stopt}>C_{a,q}/0.99$. Let 
\begin{align}\label{eq:def:ETqwidetilde}
\widetilde{E}^\eps := \left\{\om\,:\,\sup_{t\in [0,T_q]}||\Phi e^{tB/\eps^2}\zedh_t||\,<\,0.99C_{\stopt}\right\}.
\end{align}
Now, using \eqref{eq:Phievleqv1} and \eqref{eq:lem:add:propertyPTq:auxeq3}
\begin{align*}
\mbbP[\Om \setminus \widetilde{E}^\eps]\,\,& =  \mbbP\left[\sup_{t\in [0,T_q]}||\Phi e^{tB/\eps^2}\zedh_t||\,\geq\,0.99C_{\stopt}\right]\\
&\leq  \mbbP\left[\sup_{t\in [0,T_q]}||\zedh_t||_1\,\geq\,0.99C_{\stopt}\right]\,\, \leq\,\,\mbbP\left[\sup_{t\in[0,T_q]}||\zedh_t||_1\geq C_{a,q}\right]\,\,\leq \,\,f(C_{a,q})\,\,<\,\,q.
\end{align*}
Hence $\mbbP[\widetilde{E}^\eps] \geq 1-q$. But, for $T \leq T_q$ the set $E^\eps$ defined in \eqref{assonzedh_setEeps} contains $\widetilde{E}^\eps$ and hence  we have that for $T\in [0,T_q]$, $\mbbP[E^\eps]\,\,\geq\,\,1-q.$ Hence \eqref{eq:considerMAINadd} possesses the property $\mathscr{P}(T,q)$ for $T\in [0,T_q]$.

\subsection{Proof of proposition \ref{lem:add:propertyPTq_stable}}\label{appsec:lem:add:propertyPTq_stableNony}

To keep things simple, we prove assuming $||\ydec^\eps_0||=0$ (which ensures that $||\ydec^\eps_t||=0$ for all $t\geq 0$).  
Using $\int_0^t ||\ydec^\eps_s||^nds \leq \eps^2(K/nk)||\ydec^\eps_0||^n$ (because of exponential decay \eqref{eq:ydecexpfastdec}), it is easy to see that the following ideas work even if we assume that  $||\ydec^\eps_0||\neq 0$.

For simplicity of notation we write $G=G_1+G_3$ where $G_1$ is the linear part and $G_3$ is the cubic part.

We have
\begin{align*}
\zedh_t=\zedh_0+\int_0^te^{-sB/\eps^2}\Psiz G(\Phi e^{sB/\eps^2}\zedh_s)ds+\mfw_t, \qquad \mfw_t:=\int_0^te^{-sB/\eps^2}\Psiz \sigma dW_s.
\end{align*}
Writing $\zpw_t=\zedh_t-\mfw_t$, we have
\begin{align}\label{eq:howzpwevolves}
\dot{\zpw}_t=e^{-tB/\eps^2}\Psiz G(\Phi e^{tB/\eps^2}(\zpw_t+\mfw_t))
\end{align}
from which we can write (using that the transpose of $e^{tB/\eps^2}$ is $e^{-tB/\eps^2}$)
\begin{align*}
\frac12\frac{d}{dt}||\zpw_t||_2^2\,&=\,(e^{tB/\eps^2}\zpw_t)^*\Psiz G(\Phi e^{tB/\eps^2}(\zpw_t+\mfw_t)) \\
&=\,\,(e^{tB/\eps^2}\zpw_t)^*\Psiz G(\Phi e^{tB/\eps^2}\zpw_t)\,\,+\,\,(e^{tB/\eps^2}\zpw_t)^*\Psiz \left(G(\Phi e^{tB/\eps^2}(\zpw_t+\mfw_t))-G(\Phi e^{tB/\eps^2}\zpw_t)\right).
\end{align*}
Using $G=G_1+G_3$, and the Lipschitz condition on the linear part $|G_1(\eta_1)-G_1(\eta_2)|\leq K_G||\eta_1-\eta_2||$, and that $||\Phi e^{tB/\eps^2}\mfw_t||\leq ||\mfw_t||_1$, and $|(e^{tB/\eps^2}\zpw_t)^*\Psiz|\leq ||\Psiz||_2||\zpw_t||_2$, we have
\begin{align}\label{eq:rateofevolnormzpw2}
\frac12\frac{d}{dt}||\zpw_t||_2^2\,\,\,&\leq\,\,\,K_G||\Psiz||_2||\zpw_t||_2^2+ (e^{tB/\eps^2}\zpw_t)^*\Psiz G_3(\Phi e^{tB/\eps^2}\zpw_t) \\ \notag
& \qquad \qquad +\,\,||\Psiz||_2||\zpw_t||_2K_G||\mfw_t||_1\,\,+\,\,\sum_{j=1}^3c_j||\zpw_t||_2^{4-j}||\mfw_t||_1^j,
\end{align}
for some constants $c_j>0$.

Define the time averaging operator $\Tavg$ as follows: For a periodic function $f:\R\to \R$ with period $2\pi/\om_c$, the action of $\Tavg$ is given by $\Tavg(f)=\frac{1}{2\pi/\om_c}\int_0^{2\pi/\om_c}f(s)ds$. Note that the condition \eqref{lem:add:propertyPTq_stable_condition} means that  
\begin{align}\label{eq:TavgG3conditionMeaning}
\Tavg \left((e^{\cdot B}z)^*\Psiz G_3(\Phi e^{\cdot B}z)\right) \,<\,-C_G||z||_2^4.
\end{align}
Define
\begin{align}\label{eq:def:G3tildefeq}
\tilde{G}_3(z,t):=(e^{t B}z)^*\Psiz G_3(\Phi e^{t B}z)-\Tavg ((e^{\cdot B}z)^*\Psiz G_3(\Phi e^{\cdot B}z)).
\end{align}
Then, using \eqref{eq:def:G3tildefeq} and \eqref{eq:TavgG3conditionMeaning} in \eqref{eq:rateofevolnormzpw2} we have
\begin{align*}
\frac12\frac{d}{dt}||\zpw_t||_2^2\,\,\,&\leq\,\,K_G||\Psiz||_2||\zpw_t||_2^2 \,\,-\,\,C_G||\zpw||_2^4 \,\,+\,\, \tilde{G}_3(\zpw_t,t/\eps^2) \\
& \qquad \qquad +\,\,||\Psiz||_2||\zpw_t||_2K_G||\mfw_t||_1\,\,+\,\,\sum_{j=1}^3c_j||\zpw_t||_2^{4-j}||\mfw_t||_1^j.
\end{align*}
Using Young's inequality we have for some $C_Y>0$
\begin{align}\label{eq:afterYoungEq}
\frac12\frac{d}{dt}||\zpw||_2^2\,\,&<\,\,-\frac12C_G||\zpw||_2^4 \,\,+\,\,C_Y||\mfw_t||_1^4\,\,+C_Y \,\,+\,\,\tilde{G}_3(\zpw_t,t/\eps^2).
\end{align}

Assume 
\begin{align}\label{eq:ass:supmfwLER}
\sup_{t\in[0,T]}||\mfw_t||_1^4<R,
\end{align}
and let $\widetilde{C}=C_Y(1+R)$. Then
\begin{align}\label{eq:lem:add:propertyPTq_stable_aux}
\frac12\frac{d}{dt}||\zpw||_2^2\,\,&<\,\,-\frac12C_G||\zpw||_2^4 \,\,+\,\,\widetilde{C}\,\,+\,\,\tilde{G}_3(\zpw_t,t/\eps^2).
\end{align}
Using comparison principle (theorem 6.1 on page 31 of \cite{HaleODEbook}), we have that $||\zpw_t||_2^2\leq \mfv_t$ where $\mfv$ is governed by
\begin{align}\label{eq:lem:add:propertyPTq_stable_aux_vsup}
\frac{d}{dt}\mfv_t\,\,&=\,\,-C_G\mfv_t+C_G(\mfv_t-\mfv_t^2) \,\,+\,\,2\widetilde{C}\,\,+\,\,2\tilde{G}_3(\zpw_t,t/\eps^2), \qquad \mfv_0=||\zpw_0||_2^2.
\end{align}
Using variation-of-constants formula, the fact that $\mfv-\mfv^2<1$, and integration-by-parts, we find that
\begin{align}\label{eq:lem:add:propertyPTq_stable_aux_vsup_upbound}
\mfv_t\,\,&<\,\,\mfv_0e^{-C_Gt}+\frac{2\widetilde{C}+C_G}{C_G}(1-e^{-C_Gt})+2\int_0^t\tilde{G}_3(\zpw_s,s/\eps^2)ds \\
& \qquad \qquad -C_G\int_0^te^{-C_G(t-s)}\left(2\int_0^s\tilde{G}_3(\zpw_u,u/\eps^2)du\right)ds. \notag
\end{align}
Now we try to obtain some bounds on the last two terms of the above inequality.

Using the structure of ${G}_3$ (defined in \eqref{eq:considerMAINadd}) and $\tilde{G}_3$ (defined in \eqref{eq:def:G3tildefeq}) and that $\Tavg(\tilde{G}_3(z,\cdot))=0$, it is easy to see that $\tilde{G}_3$ can be expressed as 
$$\tilde{G}_3(\zpw,t)=\sum_{j=1}^4(\alpha_j\cos(j\om_ct)+\beta_j\sin(j\om_ct))$$
where $\alpha_j$ and $\beta_j$ are fourth order polynomials in the components of $\zpw$. Define $$\remG(z,t):=2\int_0^t\tilde{G}_3(z,s)ds.$$
Using the structure of $\tilde{G}_3$ it is easy to see that (note that $\tilde{G}_3$ is mean zero and periodic as a function of its second argument) there exists $C_{\remG}>0$ such that
$$|\remG(\zpw,t)|\leq C_{\remG}(1+||\zpw||_2^4), \qquad \qquad ||\frac{\partial \remG}{\partial \zpw}(\zpw,t)||_2\leq C_{\remG}(1+||\zpw||_2^3).$$
Also, from \eqref{eq:howzpwevolves}, it is easy to see that $\exists C_*>0$ such that $||\dot{\zpw}||_2\leq C_*(1+||\zpw+\mfw||_2^3)$.
Since
\begin{align*}
\eps^2\remG(\zpw_t,t/\eps^2)-\eps^2\remG(\zpw_0,0)-\eps^2\int_0^t\frac{\partial \remG}{\partial \zpw}(\zpw_s,s/\eps^2)\dot{\zpw}_sds = 2\int_0^t\tilde{G}_3(\zpw_s,s/\eps^2)ds,
\end{align*}
and $\remG(\zpw,0)=0$, we have
\begin{align*}
 \left|2\int_0^t\tilde{G}_3(\zpw_s,s/\eps^2)ds\right| \,\,&\leq\,\,\eps^2C_{\remG}(1+||\zpw_t||_2^4)+\eps^2C_{\remG}C_*\int_0^t(1+||\zpw_s||_2^3)(1+||\zpw_s+\mfw_s||_2^3)ds \\ 
 &\leq\,\,\eps^2C_{\remG}(1+||\zpw_t||_2^4)+\eps^211C_{\remG}C_*\int_0^t(1+||\zpw_s||_2^6+||\mfw_s||_1^6)ds.
\end{align*}
Let 
\begin{align}\label{eq:auxeqtauepsdef_lem:add:propertyPTq_stable}
\tau^\eps := \inf \{t\geq 0\,:\,||\zpw_t||_2 \geq \frac{1}{\eps^{1/6}}\}.
\end{align}
Then, for $t \leq \min\{\tau^\eps, T\}$ we have
\begin{align*}
\left|2\int_0^t\tilde{G}_3(\zpw_s,s/\eps^2)ds\right| \,\,&\leq \,\,\eps^2C_{\remG}+\eps^{4/3}C_{\remG}  + \eps^211C_{\remG}C_*\int_0^t(1+\eps^{-1}+R^{3/2})ds. 
\end{align*}
 When $\eps<1$, we have (from the above inequality) that  for $t \leq \tau^\eps \wedge T$
\begin{align}
\left|2\int_0^t\tilde{G}_3(\zpw_s,s/\eps^2)ds\right| \,\,&\leq\,\,\eps^{4/3}2C_{\remG}\,\,+\,\,\eps\widehat{C}t
\label{eq:lem:add:propertyPTq_stable_aux_intg_aftbou_simp}
\end{align}
where $\widehat{C}=22C_{\remG}C_*(1+R^{3/2})$.

Using \eqref{eq:lem:add:propertyPTq_stable_aux_intg_aftbou_simp} in \eqref{eq:lem:add:propertyPTq_stable_aux_vsup_upbound}, we have\footnote{For the last term in RHS of \eqref{eq:lem:add:propertyPTq_stable_aux_vsup_upbound} we have used that $$|C_G\int_0^te^{-C_G(t-s)}f(s)ds| \quad \leq \quad (\sup_{s\in [0,t]}|f(s)|)\,C_G\int_0^te^{-C_G(t-s)}ds\quad \leq\quad (\sup_{s\in [0,t]}|f(s)|).$$} for $\eps<1$ and $t \leq \tau^\eps \wedge T$
\begin{align}\label{eq:lem:add:propertyPTq_stable_aux_vsup_upbound_up}
||\zpw_t||_2^2\,\,\leq\,\,\mfv_t\,\,&<\,\,\mfv_0e^{-C_Gt}+\frac{2\widetilde{C}+C_G}{C_G}(1-e^{-C_Gt})+\eps2\widehat{C}t+\eps^{4/3}4C_{\remG} \\ \notag
&<\,\,\max\{||\zpw_0||_2^2,\,\frac{2\widetilde{C}+C_G}{C_G}\}+\eps2(\widehat{C}T+2C_{\remG}).
\end{align}
Hence, for $\eps<1$ and $t \leq \tau^\eps \wedge T$
\begin{align*}
||\zpw_t||_2\,\,<\,\,||\zpw_0||_2+1+\sqrt{\frac{2\widetilde{C}}{C_G}}+\sqrt{\eps}\sqrt{2\widehat{C}T}+\sqrt{\eps}\sqrt{4C_{\remG}}. 
\end{align*}
Using $\widehat{C}=22C_{\remG}C_*(1+R^{3/2})$ and that $\widetilde{C}=C_Y(1+R)$ we find that
\begin{align*}
||\zpw_t||_2\,\,&<\,\,||\zpw_0||_2+(1+\sqrt{\eps}\sqrt{4C_{\remG}})+\sqrt{\frac{2C_Y}{C_G}}\sqrt{1+R}+\sqrt{\eps}\sqrt{44C_{\remG}C_*T}(1+R^{3/4}). 
\end{align*}
Note that  $\exists \,\eps_{(2)}$ such that $\forall \eps < \eps_{(2)}$ we have $\sqrt{\eps}\sqrt{4C_{\remG}}<1$. Also,  $\exists \,\eps_{(3)}$ such that $\forall \eps < \eps_{(3)}$ we have $\sqrt{\eps}\sqrt{\frac{44C_{\remG}C_*T}{2C_Y/C_G}}<1$.
Hence, for $\eps<\min\{1,\eps_{(2)},\eps_{(3)}\}=:\eps_{(4)}$ and $t \leq \tau^\eps \wedge T$ we have\footnote{We use $\sqrt{1+R}+(1+R^{3/4})\,\,\, <\,\,\, 6\sqrt{1+R^{6/4}}$.}
\begin{align}\label{eq:lem:add:propertyPTq_stable_aux_zpw_upbound_upup}
||\zpw_t||_2\,\,<\,\,||\zpw_0||_2+2+6\sqrt{\frac{2C_Y}{C_G}}\sqrt{1+R^{6/4}}.
\end{align}
Hence, for $\eps<\eps_{(4)}$ if $\tau^\eps \geq T$ we have (using $||\zedh_t||_2\leq ||\zpw_t||_2+||\mfw_t||_1$)
 \begin{align*}\sup_{t\in[0,T]}||\zedh_t||_2\,\,&<\,\, ||\zedh_0||_2+2+6\sqrt{\frac{2C_Y}{C_G}}\sqrt{1+R^{6/4}} + R^{1/4}\\ &<\,\,||\zedh_0||_2+2+6\left(\frac{1}{6}+\sqrt{\frac{2C_Y}{C_G}}\right)\sqrt{1+R^{6/4}} \\
 & =: \quad ||\zedh_0||_2+2+C_{YG}\sqrt{1+R^{6/4}} 
 \end{align*}
Because the initial condition is deterministic $\exists C_0>0$ such that $||\zedh_0||_2<C_0$. Hence
 \begin{align}\label{eq:proof:aux:howmuchboundonz}
 \sup_{t\in[0,T]}||\zedh_t||_2\,\,&<\,\, C_0 +2+C_{YG}\sqrt{1+R^{6/4}}. 
 \end{align}

Define $C_R$ by
$$C_R  \overset{{\tt def}}{=} C_0 + 2+C_{YG}\sqrt{1+R^{6/4}}.$$
For $\eps<\eps_{(4)}$ and $t \leq \tau^\eps \wedge T$, we have  from \eqref{eq:lem:add:propertyPTq_stable_aux_zpw_upbound_upup} that $||\zpw_t||_2 < C_R$. So, if we define $\eps_R:=(1/C_{R})^6$, then for $\eps < \min\{ \eps_{(4)},\eps_R\}$ we have that $||\zpw_t||_2 < \frac{1}{\eps^{1/6}}$ and hence $\tau^\eps >T$. Hence, from \eqref{eq:proof:aux:howmuchboundonz} we have that for $\eps < \min\{ \eps_{(4)},\eps_R\}$
 \begin{align}\label{eq:proof:aux:howmuchboundonz_nice}
 \sup_{t\in[0,T]}||\zedh_t||_2\,\,&<\,\, C_R. 
 \end{align}
Recalling the definition of $R$ from \eqref{eq:ass:supmfwLER} we have for $\eps < \min\{ \eps_{(4)},\eps_R\}$
\begin{align}\label{eq:proof:aux:auxauxnice}
\mbbP\left[\sup_{t\in[0,T]}||\zedh_t||_2\,\geq\,  C_R\right]\,\,&\leq\,\,\mbbP\left[\sup_{s\in[0,T]}||\mfw_t||_1\,\geq \, \left( \left( \frac{C_R-C_0-2}{C_{YG}}  \right)^2-1 \right)^{1/6} \right].
\end{align}
Lets estimate the RHS of the above equation. Using the definition  of $\mfw$ and then Markov and Burkholder-Davis-Gundy inequalities we have that 
\begin{align*}
\mbbP\left[\sup_{s\in[0,T]}||\mfw_t||_1\,\geq \, \rho \right] \,\,\,&\leq\,\,\,\sum_{j=1}^2\mbbP\left[\sup_{s\in[0,T]}\left|\int_0^t(e^{-sB/\eps^2}\Psiz)_j \sigma dW_s\right|\,\geq \, \frac12\rho \right] \\
&\leq\,\,\,\frac{2}{\rho}\sum_{j=1}^2\expt\left[\sup_{s\in[0,T]}\left|\int_0^t(e^{-sB/\eps^2}\Psiz)_j \sigma dW_s\right|\right] \\
&\leq\,\,\,\frac{2C_{bdg}}{\rho}\sum_{j=1}^2\expt \sqrt{\int_0^t(e^{-sB/\eps^2}\Psiz)_j^2 \sigma^2 ds}  \quad \leq \quad \frac{2\sqrt{2}C_{bdg}}{\rho}||\Psiz||_2\sigma \sqrt{T} 
\end{align*}
Hence, from \eqref{eq:proof:aux:auxauxnice} we have that for $\eps < \min\{ \eps_{(4)},\eps_R\}$
\begin{align}\label{eq:proof:aux:auxauxnice2}
\mbbP\left[\sup_{t\in[0,T]}||\zedh_t||_2\,\geq\,  C_R\right]\quad &\leq\quad \frac{2\sqrt{2}C_{bdg}||\Psiz||_2\sigma \sqrt{T}}{\left( \left( \frac{C_R-C_0-2}{C_{YG}}  \right)^2-1 \right)^{1/6}} \quad \overset{{\tt def}}{=}: \quad f(C_R).
\end{align}

Let ${C}_{R,q}>C_0+2+C_{YG}$ be such that $f(C_R)<q$ for $C_R>C_{R,q}$.
Such a ${C}_{R,q}$ exists because $f$ is monotonically decreasing in $C_R$ (for $C_R>C_0+2+C_{YG}$). Choose $C_{\stopt}>C_{R,q}/0.99$ and $\eps_*=\min\{\eps_{(4)},(C_{R,q})^{-6}\}$. Let $E^\eps$ be as defined in \eqref{assonzedh_setEeps}. Now, using \eqref{eq:proof:aux:auxauxnice2} for $\eps < \eps_*$
\begin{align*}
\mbbP[\Om \setminus E^\eps]\,\,& =  \mbbP\left[\sup_{t\in [0,T]}||\Phi e^{tB/\eps^2}\zedh_t||\,\geq\,0.99C_{\stopt}\right]\\
&\leq  \mbbP\left[\sup_{t\in [0,T]}||\zedh_t||_2\,\geq\,0.99C_{\stopt}\right]\,\, \leq\,\,\mbbP\left[\sup_{t\in[0,T]}||\zedh_t||_2\geq C_{R,q}\right]\,\,\leq \,\,f(C_{R,q})\,\,<\,\,q.
\end{align*}
Hence $\mbbP[E^\eps]\,\,\geq\,\,1-q,$ and so \eqref{eq:considerMAINadd} possesses the property $\mathscr{P}(T,q)$. As mentioned above, for any $q>0$ it is possible to select $\eps_*$ and $C_{R,q}$ such that $f(C_{R,q})<q$ and hence  $\zedh$  possesses the property $\mathscr{P}(T,q)$ for arbitrary $q>0$.

\subsection{Proof of proposition \ref{prop:betaepsSmallAddNoiseFin}}\label{sec:proofofprop_betaepsSmallAddNoiseFin}

Using Ito formula, $\beta^\eps_t$ satisfies $$d\beta^\eps_t=\B_tdt, \qquad \beta^\eps_0=0,$$ where 
$$\B_t=\Gamma_t\left(G(\Phi e^{tB/\eps^2}\zedt_t)-G(\Phi e^{tB/\eps^2}\zedh_t+\ydec_t)\right), \qquad \Gamma_t=\sum_{i=1}^{2}\left(e^{-tB/\eps^2}\Psiz\right)_i\left(\zedt_t-\zedh_t\right)_i.$$
Using the structure of $e^{-tB}$ and $\Psiz$, we have $|\Gamma_t|\leq \sqrt{\Psiz^*\Psiz}\sqrt{2\beta^\eps_t}$. Writing $\zedt_t$ as $\zedh_t+(\zedt_t-\zedh_t)$ and expanding $G$ in $\B_t$ we get
\begin{align*}
|\B_t|\,\leq\,C\sqrt{\beta^\eps_t}\left(\sqrt{\beta^\eps_t}+||\ydec^\eps_t|| + \sum_{j=1}^{3}||\Phi e^{tB/\eps^2}\zedh_t||^{3-j}\left((\sqrt{\beta^\eps_t})^j+||\ydec^\eps_t||^j\right)\right)
\end{align*}
Because the nonlinearity is such that \eqref{lem:add:propertyPTq_stable_condition} is satisfied, by lemma \ref{lem:add:propertyPTq_stable}, $\zedh$ possesses property $\mathscr{P}(T,q)$ for abitrary $q>0$. Hence, it is possible to select $C_{\stopt}>0$  so that $\exists\,\eps_*>0$ such that $\forall \eps<\eps_*$, we have $\mbbP[E^\eps]\geq1-q$ where $E^\eps$ is defined in \eqref{assonzedh_setEeps}. So, with probability at least $1-q$ we have
\begin{align*}
|\B_t|\,\leq\,C\sqrt{\beta^\eps_t}\left((1+C_{\stopt}^2)\sqrt{\beta^\eps_t} + C_{\stopt}(\sqrt{\beta^\eps_t})^2+(\sqrt{\beta^\eps_t})^3 +(1+C_{\stopt}^2)\sum_{j=1}^3||\ydec^\eps_t||^j \right).
\end{align*}
Let $\mathscr{C}:=||(1-\pi)\hpex{0}||$.
As long as $\sqrt{\beta^\eps_t}<1$ we have
\begin{align*}
|\B_t|\,\leq\,C\sqrt{\beta^\eps_t}\left((3+2C_{\stopt}^2)\sqrt{\beta^\eps_t}+(1+C_{\stopt}^2)\left(\sum_{j=1}^3\mathscr{C}^j\right)e^{-kt/\eps^2} \right).
\end{align*}
Hence, as long as $\sqrt{\beta^\eps_t}<1$
\begin{align*}
d\sqrt{\beta^\eps_t}\,\leq\,C(3+2C_{\stopt}^2)\sqrt{\beta^\eps_t}+C(1+C_{\stopt}^2)\left(\sum_{j=1}^3\mathscr{C}^j\right)e^{-kt/\eps^2}.
\end{align*}
Using Gronwall, we get (as long as $\sqrt{\beta^\eps_t}<1$)
\begin{align*}
\sqrt{\beta^\eps_t} \,&\leq\, \frac{\eps^2 C(1+C_{\stopt}^2)(\sum_{j=1}^3 \mathscr{C}^j)}{k+\eps^2C(3+2C_{\stopt}^2)} \left(e^{C(3+2C_{\stopt}^2)t}-e^{-kt/\eps^2}\right) \\
& < \frac{\eps^2 C(1+C_{\stopt}^2)(\sum_{j=1}^3 \mathscr{C}^j)}{k} e^{C(3+2C_{\stopt}^2)T}
\end{align*}
Choose $\eps_{**}$ small enough so that the above expression is less than one. Set $\eps_{\circ}=\min\{\eps_{*},\eps_{**}\}$. Then, we have $\forall \eps <\eps_{\circ}$, $\sup_{t\in [0,T]}\beta^\eps_t < C\eps^4$ with probability at least $1-q$. $\hfill \qed$

\end{document}